\documentclass[mnsc,nonblindrev,copyedit]{informs3a}

\usepackage[utf8]{inputenc}
\usepackage[letterpaper,margin=1in]{geometry}
\let\OriginalManuscriptNormalSize\normalsizeXIyesCE
\renewcommand\normalsizeXIyesCE{\OriginalManuscriptNormalSize\fontsize{11}{19.71}\selectfont\abovedisplayskip=5pt plus 2pt minus 1pt\belowdisplayskip=5pt plus 2pt minus 1pt\abovedisplayshortskip=2pt plus 1pt minus 1pt\belowdisplayshortskip=4pt plus 1pt minus 1pt}
\let\OriginalManuscriptSmallSize\smallXyesCE
\renewcommand\smallXyesCE{\OriginalManuscriptSmallSize\fontsize{10}{15}\selectfont}
\let\OriginalManuscriptFootnoteSize\footnotesizeIXyesCE
\renewcommand\footnotesizeIXyesCE{\OriginalManuscriptFootnoteSize\fontsize{9}{13.5}\selectfont}
\renewcommand\ABSfont{\fontsize{9}{18.75}\selectfont}
\AtBeginDocument{\newgeometry{margin=1in,hoffset=0pt,voffset=0pt}\normalsize\raggedbottom\predisplaypenalty=10000}

\usepackage{amsmath,amssymb,mathtools}
\usepackage{booktabs}
\usepackage{array}
\usepackage{enumerate}
\usepackage[dvipsnames]{xcolor}
\usepackage{graphicx}
\usepackage{placeins}
\usepackage{tikz}
\usetikzlibrary{arrows.meta,shapes.geometric}
\usepackage{pgfplots}
\pgfplotsset{compat=1.13}
\usepackage{natbib}
\usepackage[backref=false,bookmarks,breaklinks=true,colorlinks=true,hypertexnames=false,plainpages=false,citecolor=DarkBlue,urlcolor=DarkBlue,filecolor=DarkBlue,linkcolor=DarkBlue,pdftitle={A 1.932 Guarantee for (r,Q) Policies in Continuous-Review Lost-Sales Inventory Systems},pdfauthor={Ming Hu},pdfkeywords={inventory; lost sales; fixed ordering costs; (r,Q) policies; base-stock policies; approximation guarantees; Poisson processes}]{hyperref}
\definecolor{DarkBlue}{rgb}{0,0.08,0.45}
\definecolor{lsTeal}{rgb}{0.1567,0.4717,0.4234}
\definecolor{lsOrng}{rgb}{0.8436,0.4582,0.1704}
\definecolor{lsPale}{rgb}{0.9470,0.7987,0.6011}
\definecolor{lsBand}{rgb}{0.9550,0.8924,0.7724}

\bibpunct[, ]{(}{)}{,}{a}{}{,}
\def\bibfont{\fontsize{11}{18}\selectfont}

\TheoremsNumberedThrough
\EquationsNumberedThrough
\MANUSCRIPTNO{}
\renewcommand{\Halmos}{\unskip\nobreak\mbox{\quad$\square$}}

\newcommand{\E}{\mathbb E}
\newcommand{\Prb}{\mathbb P}
\newcommand{\OPT}{\mathrm{OPT}}

\DeclareUnicodeCharacter{03BC}{\ensuremath{\mu}}
\DeclareUnicodeCharacter{03C6}{\ensuremath{\phi}}
\DeclareUnicodeCharacter{03A6}{\ensuremath{\Phi}}
\DeclareUnicodeCharacter{03C0}{\ensuremath{\pi}}
\DeclareUnicodeCharacter{03BA}{\ensuremath{\kappa}}
\DeclareUnicodeCharacter{03B7}{\ensuremath{\eta}}
\DeclareUnicodeCharacter{03B5}{\ensuremath{\epsilon}}
\DeclareUnicodeCharacter{03B4}{\ensuremath{\delta}}
\DeclareUnicodeCharacter{03B8}{\ensuremath{\theta}}
\DeclareUnicodeCharacter{03B1}{\ensuremath{\alpha}}
\DeclareUnicodeCharacter{03B2}{\ensuremath{\beta}}
\DeclareUnicodeCharacter{03C1}{\ensuremath{\rho}}
\DeclareUnicodeCharacter{03BB}{\ensuremath{\lambda}}
\DeclareUnicodeCharacter{0393}{\ensuremath{\Gamma}}
\DeclareUnicodeCharacter{03B3}{\ensuremath{\gamma}}
\DeclareUnicodeCharacter{2113}{\ensuremath{\ell}}
\DeclareUnicodeCharacter{2264}{\ensuremath{\le}}
\DeclareUnicodeCharacter{2265}{\ensuremath{\ge}}
\DeclareUnicodeCharacter{221E}{\ensuremath{\infty}}
\DeclareUnicodeCharacter{2192}{\ensuremath{\to}}
\DeclareUnicodeCharacter{21D2}{\ensuremath{\Rightarrow}}
\DeclareUnicodeCharacter{2208}{\ensuremath{\in}}
\DeclareUnicodeCharacter{221A}{\ensuremath{\surd}}
\DeclareUnicodeCharacter{2211}{\ensuremath{\sum}}
\DeclareUnicodeCharacter{222B}{\ensuremath{\int}}
\DeclareUnicodeCharacter{2112}{\ensuremath{\mathcal L}}
\DeclareUnicodeCharacter{1D49C}{\ensuremath{\alpha}}
\DeclareUnicodeCharacter{2193}{\ensuremath{\downarrow}}
\DeclareUnicodeCharacter{2202}{\ensuremath{\partial}}
\DeclareUnicodeCharacter{2260}{\ensuremath{\ne}}
\DeclareUnicodeCharacter{21A6}{\ensuremath{\mapsto}}
\DeclareUnicodeCharacter{00B2}{\ensuremath{{}^{2}}}
\DeclareUnicodeCharacter{00B3}{\ensuremath{{}^{3}}}
\DeclareUnicodeCharacter{2074}{\ensuremath{{}^{4}}}
\DeclareUnicodeCharacter{2075}{\ensuremath{{}^{5}}}
\DeclareUnicodeCharacter{2076}{\ensuremath{{}^{6}}}
\DeclareUnicodeCharacter{2077}{\ensuremath{{}^{7}}}
\DeclareUnicodeCharacter{2078}{\ensuremath{{}^{8}}}
\DeclareUnicodeCharacter{2080}{\ensuremath{{}_{0}}}
\DeclareUnicodeCharacter{2081}{\ensuremath{{}_{1}}}
\DeclareUnicodeCharacter{2082}{\ensuremath{{}_{2}}}
\DeclareUnicodeCharacter{208B}{\ensuremath{{}_{-}}}
\usepackage{mathrsfs}

\allowdisplaybreaks
\begin{document}
\RUNAUTHOR{Hu} \RUNTITLE{$(r,Q)$ Policies for Lost Sales} \TITLE{\Large \textcolor{black}{A 1.932 Guarantee for} $(r,Q)$ \textcolor{black}{Policies}\\in Continuous-Review Lost-Sales Inventory Systems}
\ARTICLEAUTHORS{\AUTHOR{Ming Hu} \AFF{Rotman School of Management, University of Toronto, Toronto, Ontario M5S 3E6, Canada\\\EMAIL{ming.hu@rotman.utoronto.ca}}}
\ABSTRACT{We provide a uniform guarantee for a continuous-review lost-sales inventory system with unit Poisson demand, deterministic lead time, and fixed ordering costs: a simple algorithm computes an $(r,Q)$ policy and chooses between that policy and no replenishment, with cost at most $(\sqrt2+\sqrt6)/2<1.932$ times the unrestricted nonanticipative optimum. The algorithm starts by generating an optimal integer base-stock policy, which at zero setup cost achieves the tight worst-case factor $\sqrt2$ relative to the optimal system cost. For positive setup costs, it uses the reorder point of this integer base-stock policy and an integer lot at the economic order quantity scale. The inclusion of ordinary base stock in the continuous-review capped base-stock family also improves that family's earlier asymptotic $1.79$ upper guarantee to a uniform $\sqrt2$ guarantee valid at every finite parameter vector. To establish these guarantees, we construct a computable lower bound on the zero-setup optimal cost by tracking units already on hand or committed for delivery. We then combine it with a bound on holding and setup costs required by batching to obtain the general-setup guarantee and instance-specific performance bounds for any admissible replenishment policy. In our numerical study, the largest estimated policy-cost-to-lower-bound ratios are about $1.59$ for our heuristic $(r, Q)$ policy and $1.47$ after joint optimization of $r$ and $Q$. The heuristic also performs well against the optimized policy: over the sampled instances, its median extra cost ranges from $2.33\%$ to $3.91\%$, depending on the setup cost.} \maketitle
\setlength{\abovedisplayskip}{5pt plus 2pt minus 1pt}
\setlength{\belowdisplayskip}{5pt plus 2pt minus 1pt}
\setlength{\abovedisplayshortskip}{2pt plus 1pt minus 1pt}
\setlength{\belowdisplayshortskip}{4pt plus 1pt minus 1pt}

\vspace{-2em}
\section{Introduction}\label{sec:introduction}\label{sec:article-introduction}

We establish a uniform cost guarantee for a simple replenishment rule in continuous-review lost-sales systems with fixed ordering costs. Under unit Poisson demand and deterministic lead time, our algorithm selects an $(r,Q)$ policy or no replenishment with cost at most $1.932$ times the optimal system cost when holding cost and demand rate are positive. The optimal system cost is defined over all admissible replenishment policies, including those that use the full inventory and delivery history. To our knowledge, this is the first constant-factor guarantee valid for all finite parameter values in this model with positive replenishment lead times and arbitrary fixed ordering costs.

Under a classical $(r,Q)$ policy, the manager orders $Q$ units whenever inventory position, the sum of on-hand and outstanding units, falls to $r$ \citep{HadleyWhitin1963}. The reorder point protects against lead-time demand and the lot spreads setup costs over several units. A base-stock policy orders enough to restore inventory position to its target. In our continuous-review model with unit demand, $Q=1$ implements this rule with target $r+1$: each sale triggers a one-unit replacement. No replenishment is also an option when it is not economic to carry the item. In the classical continuous-review model with unit Poisson demand, deterministic lead time, and linear holding and backlogging costs, unmet demand is backlogged until it can be served. An $(r,Q)$ policy is optimal with positive setup costs, and a base-stock policy is optimal at zero setup cost \citep{FedergruenZheng1992,Song1994}. Our results show that these simple policy classes also have uniform cost guarantees when unmet demand is lost.

Specifically, we consider unit Poisson demand of rate $\lambda$, deterministic lead time $L$, holding-cost rate $h$, lost-sales penalty $p$, and fixed order cost $K$, with all five primitives nonnegative. Let $\Pi_{\rm rQ}$ consist of classical $(r,Q)$ policies and no replenishment, and let $\OPT$ be the infimum long-run average cost over all admissible nonanticipative replenishment policies. This benchmark allows history-dependent and randomized order times and arbitrary positive integer order quantities. Following the standard nonperishable lost-sales convention \citep{Hill2007,LeviJN2008,Xin2022}, policies control replenishment only: inventory on hand is carried until it serves demand, and disposal is unavailable. Our main result is
\begin{equation}
\inf_{\pi\in\Pi_{\rm rQ}}C(\pi)
\le \frac{\sqrt2+\sqrt6}{2}\,\OPT.
\label{eq:intro-main}
\end{equation}
The proved factor is $1.93185165\ldots$, rounded upward to $1.932$ in the title. The bound holds at each finite primitive vector. For $h,\lambda>0$, the proof selects one finite classical policy or no replenishment; at zero holding cost, the conclusion is an inequality between infima.

Let $\pi_{\rm S}(S)$ denote the level-$S$ base-stock policy for $S\ge1$, with $\pi_{\rm S}(0)$ denoting no replenishment. At $K=0$, the best integer base-stock policy has the stronger guarantee
\begin{equation}
\inf_{S\in\mathbb Z_{\ge0}}C(\pi_{\rm S}(S))\le\sqrt2\,\OPT.
\label{eq:intro-value}
\end{equation}
For $h,\lambda>0$, our algorithm attains this infimum with an optimal integer base-stock policy, whose factor $\sqrt2$ is \emph{tight} against the unrestricted optimum. Because the continuous-review capped base-stock family of \citet{Xin2022} contains ordinary base stock, our upper bound also applies to that larger family. It improves the earlier asymptotic $1.79$ guarantee in the same Poisson-demand, deterministic-lead-time model and holds at every finite parameter vector. The tightness result concerns ordinary base stock at zero setup cost; the tight worst-case factor for $(r,Q)$ policies with positive setup costs remains open.

The guarantee gives a simple algorithm to configure an effective $(r,Q)$ policy. For $h,\lambda>0$, write $\mu:=\lambda L$, $a:=p\lambda/h$, and $k:=K\lambda/h$. First choose any cost-minimizing integer base-stock level $S^\star$ at zero setup cost, using the closed-form cost expression in \eqref{eq:base-stock-cost}. If $S^\star\ge1$, adopt the reorder point $S^\star-1$ and set
\[
Q_k:=\max\{1,\lfloor\sqrt{2k}\rfloor\},\qquad
b_k:=\frac{Q_k-1}{2}+\frac{k}{Q_k}.
\]
The lot size $Q_k$ follows the classical economic order quantity (EOQ) scale $\sqrt{2k}=\sqrt{2K\lambda/h}$, which balances holding and setup costs. Writing $U_\mu(a)$ for the normalized base-stock cost, the resulting classical policy has cost at most $U_\mu(a)+b_k$. Choose this $(r,Q)$ policy if its cost upper bound is below $a$; otherwise choose no replenishment. If $S^\star=0$, choose no replenishment directly. All quantities in this algorithm can be efficiently computed from the demand and cost parameters.

Figure~\ref{fig:performance-overview} shows how this heuristic $(r,Q)$ policy performs across instances. At setup cost $k=10$, the contour map locates the largest evaluated cost-to-lower-bound ratio, about $1.585$, near the switch between replenishment and no replenishment. The companion panel summarizes the four setup costs studied in Section~\ref{sec:numerics}. These evaluated ratios estimate instance-specific upper bounds on the approximation ratio, while the theorem provides the factor $1.932$ uniformly over all parameters.

\begin{figure}[htbp]
\SingleSpaced
\centering
\includegraphics[width=.99\linewidth]{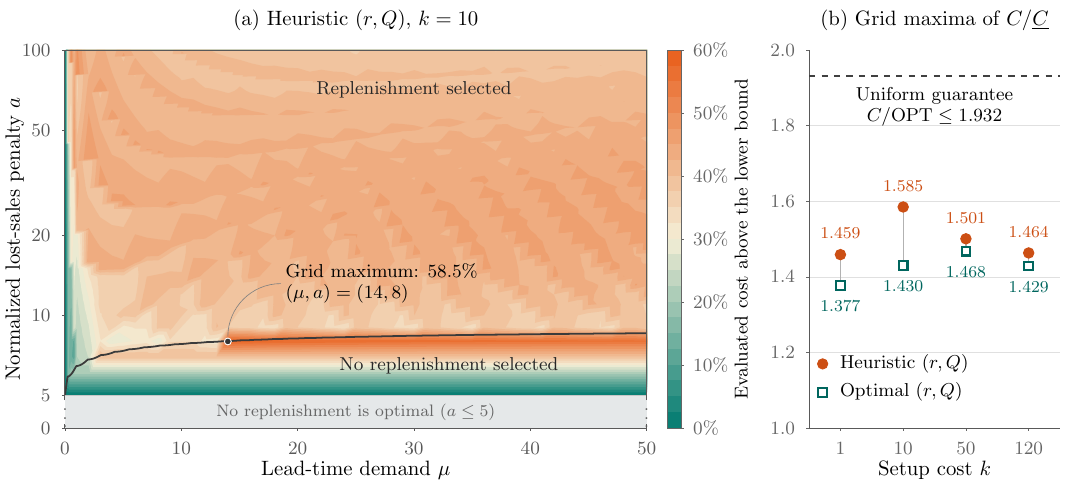}
\caption{Instance-specific performance and the uniform guarantee}
\label{fig:performance-overview}
\begin{minipage}{0.98\linewidth}
\SingleSpaced\footnotesize\emph{Notes.} Panel (a) shows $[C_{\rm heuristic}/\underline C_k(\mu,a)-1]$ at $k=10$, using the heuristic's cost and the optimal-cost lower bound from Section~\ref{sec:numerics}. The curve $U_\mu(a)+4=a$ separates the algorithm's choices. The $\mu$ axis is linear; the $a$ axis is logarithmic above $5$, with the optimal no-replenishment region $0\le a\le5$ compressed. Panel (b) gives grid maxima; ``Optimal $(r,Q)$'' denotes numerical joint optimization, including no replenishment. Costs are exact where available and independently simulated otherwise; contours interpolate grid values. The ratios use a lower bound on optimal cost, while the $1.932$ line gives the guarantee relative to the actual optimum.
\end{minipage}
\end{figure}
\FloatBarrier

The challenge is to compare this simple policy with the optimal policy, which can use the entire observed inventory and delivery history and whose structure is unknown. A lost sale leaves no customer waiting for a later delivery. Moreover, every delivery within the next lead time has already been ordered. Even an adaptive policy must therefore balance the risk of running out against the cost of supply that arrives before it is needed. We capture this restriction by recording each committed unit's position in the service order and its remaining delivery time.

The proof separates two economic tradeoffs. First, with setup costs removed, we construct a function of the model parameters and prove that it is no greater than the cost of any admissible policy. This lower bound is concave in the lost-sales penalty, while the cost of each fixed base-stock policy is affine in that penalty. If a base-stock policy costs at most $\sqrt2$ times the lower bound at two penalty values, the same inequality holds throughout the interval between them. Carefully chosen overlapping intervals establish the factor $\sqrt2$ for every lead time and penalty value. Second, batching spreads each setup charge over several units while adding inventory. We show that every admissible policy pays for the same tradeoff, which converts any zero-setup performance guarantee $c$ into
\[
R(c):=\frac{c+\sqrt{c^2+4}}2
\]
for the general setup cost setting. Setting $c=\sqrt2$ yields the factor in \eqref{eq:intro-main}.

These proof steps also give computable lower bounds for evaluating replenishment policies. Tracking committed supply bounds the zero-setup optimal cost. For positive setup costs, we allocate part of holding cost and all setup costs to bound the cost per unit served. We then apply the zero-setup lower bound to the remaining holding cost and an adjusted lost-sales penalty, obtaining a lower bound on total cost. Comparing a policy's cost with the resulting lower bound gives an instance-specific upper bound on its optimality gap, whether the policy comes from our algorithm or another heuristic. In our numerical experiments, the largest estimated cost-to-lower-bound ratios are about $1.59$ for the heuristic $(r,Q)$ policy and $1.47$ after joint optimization of $r$ and $Q$, both below the uniform factor $1.932$. Direct cost comparisons also show that the simple heuristic performs well relative to the numerically optimized $(r,Q)$ policy: its median extra cost ranges from $2.33\%$ to $3.91\%$ across the four setup costs studied. Thus, the lower bound and the direct cost comparisons help assess the value of additional optimization on a particular instance.

\section{Literature Review}\label{sec:literature}

\indent\textbf{Continuous-review control and replenishment timing.} Classical inventory theory studies reorder-point and fixed-lot rules \citep{HadleyWhitin1963,FedergruenZheng1992}; \citet{BijvankVis2011} survey lost-sales models. With positive setup costs, \citet{JohansenThorstenson1996} and \citet{HillJohansen2006} analyze lost-sales systems with at most one order outstanding. In the Poisson setting without fixed costs, \citet{Hill2007} studies minimum-delay unit-order policies. Our guarantee allows multiple outstanding orders and compares the classical $(r,Q)$ family with unrestricted nonanticipative replenishment.

\indent\textbf{Order quantities and review intervals.} Periodic-review policies also pace replenishment. An $(S,T_{\mathrm{rev}})$ policy reviews every $T_{\mathrm{rev}}$ time units and orders up to $S$. In serial systems, \citet{ShangSongZipkin2009} study coordination under periodic-review $(R,nQ)$ policies, while \citet{ShangZhou2010} optimize review intervals jointly with reorder points and batch sizes. Here the ordering trigger remains the inventory position; the theorem selects a reorder point and a fixed lot within the classical family.

\indent\textbf{Capped base-stock policies and approximation guarantees.} \citet{Reiman2004} compares base-stock and fixed-cycle policies, termed constant-order-interval policies, as the lost-sales penalty and lead time grow together. Building on this analysis, \citet{Xin2022} establishes an asymptotic $1.79$ guarantee for the cheaper of the two policies and hence for the capped base-stock family, which contains both. Ordinary base stock belongs to that family by taking the minimum time between unit orders to zero. This inclusion transfers our finite $\sqrt2$ upper guarantee to continuous-review capped base-stock policies, improving Xin's asymptotic factor in the same lost-sales model. Tightness here concerns the optimal ordinary base-stock policy selected by our algorithm.

In periodic review, \citet{Zipkin2008} compares several lost-sales heuristics and documents limitations of ordinary base stock. \citet{Xin2021} caps the \emph{quantity} ordered toward a base-stock target, and \citet{Xin2026} proves a finite $7/3$ guarantee for this class under independent and identically distributed demand and no setup cost. That result covers broader demand distributions; ours complements theirs by covering continuous-review Poisson demand and positive setup costs. The constants concern different models and policy classes.

Table~\ref{tab:closest-guarantees} summarizes the closest finite and asymptotic guarantees and their distinct scopes.

{\setlength{\intextsep}{0pt}\setlength{\textfloatsep}{0pt}%
\begin{table}[!htbp]
\vspace*{5pt}%
\centering
\caption{Closest approximation guarantees for lost-sales replenishment policies}
\label{tab:closest-guarantees}
\fontsize{8}{12}\selectfont
\setlength{\tabcolsep}{4pt}\renewcommand{\arraystretch}{1.05}
\begin{tabular}{@{}p{.16\linewidth}p{.31\linewidth}p{.10\linewidth}p{.37\linewidth}@{}}
\toprule
Result & Review and demand & Setup cost & Guarantee\\
\midrule
\citet{Xin2022} & Continuous review, unit Poisson demand & $K=0$ & Asymptotic $1.79$ for capped base stock as $p\to\infty$ with $L$ proportional to $p$\\
\citet{Xin2026} & Periodic review, general i.i.d.\ demand & $K=0$ & Finite $7/3$ for capped base stock\\
\citet{LeviJN2008} & Periodic review, independent nonstationary demand & $K=0$ & Finite $2$ for dual-balancing policies\\
This paper & Continuous review, unit Poisson demand & Any $K\ge0$ & Finite $<1.932$ for $(r,Q)$; tight $\sqrt2$ for the algorithm's optimal base stock at $K=0$\\
\bottomrule
\end{tabular}
\par\vspace{4pt}
\begin{minipage}{0.98\linewidth}
\SingleSpaced\fontsize{8}{12}\selectfont\emph{Notes.} Each ratio uses its model's unrestricted nonanticipative benchmark; the proportionality constant is fixed. Xin (2022) caps order frequency; Xin (2026) caps order quantity. Levi et al.\ use dual balancing. Our classical family includes no replenishment. The tightness statement concerns base stock at zero setup cost.
\end{minipage}
\par
\vspace{4pt}
\end{table}}

Other results without fixed ordering costs include asymptotic optimality of base-stock policies as shortage penalties grow \citep{HuhJMR2009} and asymptotic optimality of constant-order policies as lead times grow \citep{GoldbergEtAl2016,XinGoldberg2016}. On the backlogging side, \citet{LeviEtAl2007} develop approximation algorithms for stochastic inventory models, and \citet{LeviShi2013} give a factor-three guarantee for stochastic lot sizing with fixed ordering costs and positive lead times through randomized cost balancing. Lost sales require a different lower bound because unmet demand leaves no backlog for future orders to serve.

\section{Model and Policies}\label{sec:model}\label{sec:model-policies}

\subsection{System and Admissible Policies}\label{sec:admissibility}

Customers arrive according to a Poisson process of rate \(\lambda\) and request one unit. Replenishment has deterministic lead time \(L\). Holding costs \(h\) per unit per unit time, a lost demand costs \(p\), and a positive order costs \(K\). Service is mandatory whenever on-hand inventory is positive. Inventory and order quantities are integer-valued; orders cannot be canceled and on-hand units cannot be discarded. A policy may use the observed history and independent randomization, but cannot anticipate demand. Its order epochs are locally finite and its positive quantities finite. Initial inventory and initial pipeline contain finitely many units almost surely and are chosen independently of future demand. The initial pipeline is exempt from the newly placed order lead-time requirement; no bound by \(L\) on its residual delivery times is imposed. Initial supply is free; any finite expected initialization charge would not affect an average cost. Orders at a demand epoch follow that demand. When \(L=0\), replenishment cannot retroactively serve a demand already lost.

Writing $X(t)$ for on-hand inventory, $\mathcal U(t)$ for cumulative lost demand, and $O(t)$ for positive orders, define the unnormalized objective for all nonnegative primitives by
\begin{equation}
C(\pi):=\limsup_{T\to\infty}\frac1T\E_\pi\left[h\int_0^T X(t)\,dt+p\,\mathcal U(T)+KO(T)\right],
\qquad \OPT:=\inf_{\pi\text{ admissible}}C(\pi).
\label{eq:objective}
\end{equation}
The benchmark allows arbitrary history dependence and integer order quantities. The no-disposal convention enters the proof: each delivered unit must remain in stock until a demand consumes it, which is used in the calculation based on on-hand inventory.

\subsection{Normalized Parameters}
For \(h,\lambda>0\), normalize time by \(\lambda\) and average cost by \(h\), and write
\[
\mu:=\lambda L,\qquad a:=\frac{p\lambda}{h},\qquad k:=\frac{K\lambda}{h}.
\]
 Henceforth \(N(t)\) has rate one, holding costs one, and lead time is \(\mu\). Write \(X(t)\) for on-hand inventory, \(\mathcal A(t)\) for cumulative served demand, \(\mathcal U(t):=N(t)-\mathcal A(t)\) for losses, \(B^{\rm del}(t)\) for delivered units, and \(O(t)\) for the number of positive orders. Inventory position is on-hand plus pending supply. For an admissible policy \(\pi\) define
\[
C_{\mu,a,k}(\pi):=\limsup_{T\to\infty}\frac1T\mathbb E_\pi\left[\int_0^T X(t)\,dt+a\,\mathcal U(T)+kO(T)\right],
\qquad
\operatorname{OPT}_k(\mu,a):=\inf_\pi C_{\mu,a,k}(\pi).
\]
 Finite expected cost on finite horizons is required whenever the corresponding cost coefficient is positive. No stationary-policy or Markov-policy restriction is imposed. Until the batching analysis, \(k=0\); abbreviate \(\operatorname{OPT}_0(\mu,a)\) by \(\operatorname{OPT}_0(a)\) when the mean is fixed. The no-order policy starts empty and has cost \(a\).

We use the following identity to convert changes at demand epochs into expected rates. The filtration includes the observed history and independent policy randomization, so $N(t)-t$ remains a martingale \citep{Kingman1993}.
\begin{lemma}[{\sc Poisson Compensation}]\label{lem:poisson-compensation}
Let $\psi(t)$ be predictable, meaning that its value is determined before any demand at time $t$, and bounded by a deterministic constant on each fixed finite horizon. For every finite $T$,
\[
\E\sum_{0<t\le T:\,\Delta N(t)=1}\psi(t)
=\E\int_0^T\psi(t)\,dt.
\]
The identity also holds with $T$ replaced by a stopping time $\sigma$ with $\E\sigma<\infty$, provided $\psi$ is uniformly bounded. In particular, for a bounded adapted process with right-continuous paths and left limits, we evaluate its demand contribution at $t-$; replacing $t-$ by $t$ in the time integral does not change that integral.
\end{lemma}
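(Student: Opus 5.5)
The plan is to read the left-hand sum as a stochastic integral against the counting process, $\sum_{0<t\le T:\,\Delta N(t)=1}\psi(t)=\int_{(0,T]}\psi(t)\,dN(t)$, and to split $N(t)=M(t)+t$, where $M(t):=N(t)-t$ is a martingale with respect to the filtration generated by the observed history and the independent policy randomization. The identity then reduces to $\E\int_{(0,T]}\psi\,dM=0$. First I verify that $M$ remains a martingale in this enlarged filtration. Future Poisson increments are independent of the past demand path, of the policy's randomization (assumed independent of demand), and of the initial state (chosen independently of future demand). Hence $\E[N(t)-N(s)\mid\mathcal F_s]=t-s$.

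Next I handle the predictable integrand. For a simple predictable $\psi=\sum_i \xi_i\mathbf 1_{(t_i,t_{i+1}]}$ with $\xi_i$ bounded and $\mathcal F_{t_i}$-measurable, I get $\E\int\psi\,dM=\sum_i\E[\xi_i(M(t_{i+1})-M(t_i))]=0$ by conditioning. For a general bounded predictable $\psi$ on $[0,T]$, I use the monotone class theorem. The class of bounded predictable processes satisfying the identity contains the simple ones and is closed under bounded monotone limits: both sides converge by dominated convergence, since $|\psi|\le c_T$ and $\E N(T)=T<\infty$. The predictable $\sigma$-field is generated by the simple processes, so the identity holds for all such $\psi$. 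This step carries the only real content, namely that $\psi(t)$ is determined before any demand at $t$, and I expect it to be the main obstacle. In particular, I must check that "determined before any demand at time $t$" matches predictability, that is, measurability with respect to $\mathcal F_{t-}$ in the appropriate sense.

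For a stopping time $\sigma$ with $\E\sigma<\infty$ and uniformly bounded $|\psi|\le c$, I apply the finite-horizon result to $\psi\mathbf 1_{(0,\sigma\wedge T]}$. This process is still predictable, because $\mathbf 1_{(0,\sigma]}$ is left-continuous and adapted. Letting $T\to\infty$, both sides converge by dominated convergence with dominating variables $cN(\sigma)$ and $c\sigma$. Their integrability follows from the same identity with $\psi\equiv1$ and monotone convergence: $\E N(\sigma)=\E\sigma<\infty$.

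Finally, for a bounded adapted càdlàg process $Y$, the left-limit process $Y(t-)$ is left-continuous and adapted, hence predictable, so the identity applies to $\psi(t)=Y(t-)$. Such a $Y$ has at most countably many jumps on each path, so $Y(t-)=Y(t)$ except on a Lebesgue-null set of times. Therefore $\int_0^T Y(t-)\,dt=\int_0^T Y(t)\,dt$ pathwise, which gives the last claim.
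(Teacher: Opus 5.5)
Your proof is correct and follows the paper's route: the compensated integral $\int_0^T\psi(t)\,[dN(t)-dt]$ has mean zero because $N(t)-t$ is a martingale in the enlarged filtration, and the stopped version follows by a limiting argument. The paper applies the identity to the positive and negative parts of $\psi\mathbf1_{\{t\le\sigma\}}$ and uses monotone convergence, whereas you use dominated convergence with dominators $cN(\sigma)$ and $c\sigma$; these are equivalent. Your simple-process/monotone-class step and the c\`adl\`ag remark fill in details the paper leaves implicit.
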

\proof{Proof.}
The compensated integral $\int_0^T\psi(t)\,[dN(t)-dt]$ has mean zero. Applying this identity to the positive and negative parts of $\psi(t)\mathbf1_{\{t\le\sigma\}}$ and then using monotone convergence proves the stopped version; boundedness and $\E\sigma<\infty$ make both sides finite.\Halmos
\endproof

\subsection{Replenishment Rules}\label{sec:policy}

The classical policy $\pi_{\rm rQ}(r,Q)$ has integer reorder point $r\ge0$ and lot $Q\ge1$. It starts with $r+Q$ units on hand and an empty pipeline. A served demand that reduces inventory position from $r+1$ to $r$ triggers one immediate order of $Q$ units. The post-order position is $r+Q$, so no second order is triggered at that epoch. With $r=S-1$ and $Q=1$, this is the level-$S$ base-stock policy $\pi_{\rm S}(S)$. We separately include the empty-start no-order policy in $\Pi_{\rm rQ}$.

Table~\ref{tab:policy} records the parameters and the no-replenishment option of this same family. The zero-setup analysis studies its unit-lot member; the full-cost construction changes the lot size while retaining the inventory trigger.

\begin{table}[htbp]
\centering\small
\caption{Parameters and the no-replenishment option}\label{tab:policy}
\fontsize{8}{12}\selectfont
\begin{tabular}{@{}lll@{}}
\toprule
Case & Parameters & Ordering decision\\
\midrule
Classical replenishment & Integers $r\ge0$, $Q\ge1$ & Order $Q$ when position reaches $r$\\
Unit-lot base stock & $r=S-1$, $Q=1$, $S\ge1$ & Replace each sold unit\\
No replenishment & Empty initial system & Never order\\
\bottomrule
\end{tabular}
\end{table}

\subsection{Exact Base-Stock Costs}\label{sec:frontiers}
\label{sec:base-stock-costs}

\begingroup\predisplaypenalty=0
Write
\[
p_j(t):=e^{-t}\frac{t^j}{j!},\qquad F_j(t):=\sum_{i=0}^j p_i(t),\qquad
G_l(t):=\mathbb E(l-N(t))^+,
\]
\endgroup
 where $p_i:=0$ and $F_i:=0$ for $i<0$. At the fixed mean \(\mu>0\), suppress arguments and write
\[
e_j:=\frac{p_j}{F_j},\qquad A_j:=\frac{F_j}{p_j},\qquad
H_j:=j-\mu+\mu e_j,\qquad e_0=1,\quad H_0=0.
\]
 The symbol \(A_j\) is an Erlang normalizer; \(\alpha_\mu(\eta)\) below is a calibrated penalty.

A served demand ties up one of the $j$ inventory positions until its replacement arrives $\mu$ time units later. Thus, pending replacements form an $\mathrm M/\mathrm D/j/j$ loss system. Its stationary occupied-position count has a Poisson$(\mu)$ distribution conditional on being at most $j$ \citep{Zachary2007}. Poisson arrivals see these stationary probabilities \citep{Wolff1982}, so $e_j$ is the fraction of demand lost and $H_j$ is mean on-hand inventory. Appendix~\ref{app:erlang-details} justifies the same average costs from our initialization and derives the moment formulas used below.

Thus, the stationary on-hand count \(I_j\) is distributed as \(j-N(\mu)\) conditional on \(N(\mu)\le j\):
\[
\mathbb P(I_j=l)=\frac{p_{j-l}}{F_j}
=\frac{(j)_l\mu^{-l}}{A_j},\qquad 0\le l\le j,
\qquad
A_j=\sum_{l=0}^j(j)_l\mu^{-l}.
\]
 Here $(j)_l:=j(j-1)\cdots(j-l+1)$ for integers $1\le l\le j$, and $(j)_0:=1$. In particular, $\E I_j=H_j$. The zero-setup cost of a base-stock level is its mean inventory plus the penalty times its loss fraction.
\begin{equation}\label{eq:base-stock-cost}
L_j(a):=H_j+ae_j=j-\mu+(\mu+a)e_j,
\qquad
U_\mu(a):=\min_{j\ge0}L_j(a).
\end{equation}
 The minimum is attained because \(H_j\ge j-\mu\).

\section{Main Results}\label{sec:main-results}\subsection{Main Theorem}\label{sec:article-results}

Write $\Pi_{\rm S}:=\{\pi_{\rm S}(S):S\in\mathbb Z_{\ge0}\}$ for integer base-stock policies, including no replenishment. Costs and the unrestricted benchmark $\OPT$ are defined in the original units by \eqref{eq:objective}.

\begin{theorem}[{\sc Zero-Setup and Full-Cost Guarantees}]\label{thm:article-main}
For every nonnegative primitive vector $(\lambda,L,h,p,K)$,
\begin{equation}
\inf_{\pi\in\Pi_{\rm rQ}} C(\pi)
\le\frac{\sqrt2+\sqrt6}{2}\,\OPT.
\label{eq:article-full}
\end{equation}
When $K=0$,
\begin{equation}
\inf_{\pi\in\Pi_{\rm S}} C(\pi)\le\sqrt2\,\OPT.
\label{eq:article-operating}
\end{equation}
For $h,\lambda>0$, the algorithm in Corollary~\ref{cor:policy-selection} returns an optimal integer base-stock policy at $K=0$. Its $\sqrt2$ guarantee is tight: in normalized parameters,
\begin{equation}
\sup_{\mu,a>0}\frac{U_\mu(a)}{\operatorname{OPT}_0(\mu,a)}=\sqrt2.
\label{eq:article-sharp}
\end{equation}
The ratio is well defined: for $\mu,a>0$, the lower bound in \eqref{eq:concave-lower-bound} satisfies $\operatorname{OPT}_0(\mu,a)\ge\mathcal L_\mu(a)\ge\min\{1,a\}>0$.
\end{theorem}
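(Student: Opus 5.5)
The proof will go through normalized parameters, and I will first dispose of the degenerate cases. If $h=0$ or $\lambda=0$, the conclusion is an inequality between infima. When $\lambda=0$ every policy that never orders costs zero, so both sides vanish. When $h=0$, a base-stock policy with a huge level makes the loss rate arbitrarily small with no holding charge, so both infima are $0$ (or $\min\{0,\dots\}$ when $p=0$). For $h,\lambda>0$, dividing by $h$ and rescaling time by $\lambda$ turns $C(\pi)/h$ into $C_{\mu,a,k}(\pi)$, and it suffices to prove the normalized statements.

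\textbf{Zero-setup lower bound.} The core step is a computable function $\mathcal L_\mu(a)$ with $\operatorname{OPT}_0(\mu,a)\ge\mathcal L_\mu(a)\ge\min\{1,a\}$ that is concave in $a$. I would build it from committed supply, as the introduction indicates. Any unit ordered at time $t$ cannot serve demand before $t+\mu$. Label each unit on hand or in the pipeline by its rank in the service order. For a unit of rank $l$ with residual delivery time $s$, the demand it serves is the $l$-th demand after its delivery, and it is lost if demand exhausts it first. Applying Lemma~\ref{lem:poisson-compensation} to these ranks gives a per-demand accounting: each demand is either lost, at cost $a$, or served by a unit held for an Erlang-type waiting time. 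The unit's ordering decision was made without knowing the demand during the lead time. This yields a lower bound of the form $\inf$ over a family of ``randomized commitment'' distributions of a cost that is affine in $a$. A pointwise infimum of affine functions is concave, which gives concavity. The bound $\min\{1,a\}$ follows because each unit of demand either costs $a$ or requires positive holding, with the Poisson structure giving at least $1$ on average in the regime where serving is worthwhile. I would take \eqref{eq:concave-lower-bound} as this function.

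\textbf{The $\sqrt2$ bound and its tightness.} For fixed $j$, $L_j(a)$ is affine in $a$ and $\mathcal L_\mu$ is concave, so $L_j\le\sqrt2\,\mathcal L_\mu$ on an interval $[\alpha,\beta]$ follows from checking the two endpoints. I will therefore cover $a\in(0,\infty)$ by overlapping intervals $[\alpha_\mu(\eta_j),\alpha_\mu(\eta_{j+1})]$, using level $j$ on each, with the calibrated penalties $\alpha_\mu(\eta)$ chosen so that the endpoint checks reduce to explicit Erlang inequalities in $e_j,H_j$. I expect this to be the main obstacle: the verification must hold uniformly in $\mu$, including the limits $\mu\to0$, $\mu\to\infty$ and $j$ large, where I would use a normal/Erlang asymptotic comparison. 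The first interval is handled with level $0$, whose cost is $a$, so $a\le\sqrt2\min\{1,a\}$ for $a\le\sqrt2$. Tightness \eqref{eq:article-sharp} follows by exhibiting a sequence, likely $\mu\to0$ with $a$ near the switch point between $j=0$ and $j=1$, on which $\mathcal L_\mu$ is asymptotically attained by an explicit policy. An $\epsilon$-delayed ordering policy would achieve the ratio $\sqrt2$.

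\textbf{Setup costs.} For $k>0$ I would prove a batching lower bound: any admissible policy pays at least, per unit served, a setup share $k/Q$ plus a cycle-inventory share $(Q-1)/2$ in some averaged sense. I would then split $\operatorname{OPT}_k$, allocating a fraction $\theta$ of holding cost to the batching bound and the rest to the zero-setup bound with an adjusted penalty. This gives $\operatorname{OPT}_k\ge$ a combination of $\theta$-scaled $\sqrt{2k}$ and $\mathcal L$. The algorithm's cost is at most $\min\{a,U_\mu(a)+b_k\}$, with $b_k\le\sqrt{2k}$ up to rounding handled by the $\lfloor\cdot\rfloor$ analysis. Combining $U\le c\,\mathcal L$ with this split and optimizing $\theta$ yields the ratio $R(c)=(c+\sqrt{c^2+4})/2$, which is the root of $R^2=cR+1$. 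With $c=\sqrt2$ this gives $(\sqrt2+\sqrt6)/2$.
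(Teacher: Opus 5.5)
Your tightness step would fail. Near $\mu=0$ and the switch between levels $0$ and $1$, base stock is essentially optimal. For $a\le1$, no replenishment is exactly optimal at every $\mu$, by your own bound $\operatorname{OPT}_0(\mu,a)\ge\min\{1,a\}$. For $a>1$, you have $U_\mu(a)\le L_1(a)=(1+a\mu)/(1+\mu)$ against $\operatorname{OPT}_0(\mu,a)\ge1$. So for bounded $a$ the ratio tends to $1$ as $\mu\to0$, and at $\mu=0$ base stock is exactly optimal. The $\sqrt2$ loss is a long-lead-time effect: base stock turns bursts of sales into bursts of deliveries one lead time later. The paper therefore takes $\mu=a^2$ and lets $a\to\infty$:
\begin{itemize}
\item Ordering one unit every $1/(1-\ell)$ time units, with $\ell=1/\sqrt{2a}$, keeps mean inventory at most $1/(2\ell)$ by a quadratic drift argument. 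Hence $\operatorname{OPT}_0(a^2,a)\le\sqrt{2a}$.
\item The variance bound $v_j\le H_j(H_j+1)$ gives $e_j\ge1/(H_j+1)-H_j/\mu$. Hence, uniformly in $j$, $L_j(a)\ge\omega(H_j+1)+a/(H_j+1)-\omega\ge2\sqrt{a\omega}-\omega$, where $\omega=1-a/\mu$.
\end{itemize}
The ratio is then at least $(2\sqrt{a-1}-1+a^{-1})/\sqrt{2a}\to\sqrt2$. Note that tightness needs a feasible policy bounding $\operatorname{OPT}_0$ from above, not attainment of $\mathcal L_\mu$.

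The upper-bound half is missing its two core ingredients.

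\emph{The lower bound is never constructed.} Your per-demand sketch overlooks the central difficulty: ranks advance only at \emph{served} demands. During a stockout the pipeline ages while no rank moves, so a unit's time to consumption is not an Erlang time fixed by its rank. A history-dependent policy also induces no fixed family of commitment distributions to optimize over. The paper uses the potential $\sum_iV_i(\tau_i)$ with $V_i(\tau)=G_i(\tau)+\eta\Prb(N(\tau)\ge i)-g$:
\begin{itemize}
\item the identity $V_i'=V_{i-1}-V_i$ cancels rank shifts against aging whenever stock is positive;
\item orders are harmless when $g\le B_\mu(\eta)$;
\item at a stockout the uncompensated aging is at most $D_\mu(\eta)$, a supremum over all delivery schedules and hence valid for every policy.
\end{itemize}
Your concavity argument also does not fit the function you adopt. $\mathcal L_\mu(a)=\sup_{\eta\ge1}\min\{B_\mu(\eta),a-D_\mu(\eta)\}$ is a supremum, and a supremum of concave functions need not be concave. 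The paper uses joint concavity in $(a,\eta)$, with $B_\mu$ concave (a minimum over ranks) and $D_\mu$ convex (a supremum over schedules).

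\emph{The endpoint comparisons are left open.} Asymptotic normal approximations cannot give a bound valid at every finite $\mu$. The paper's endpoints are as follows.
\begin{itemize}
\item For $j\ge\mu$ only, it uses calibrated endpoints $\widehat a_j=\alpha_\mu(A_j)$. A conditional-variance identity bounds $B_\mu(A_j)$ below by the positive root of $x(x-j-1+\mu)=\mu$, and a Brownian comparison bounds the expected maximum in $\alpha_\mu$.
\item For $3\le j\le\mu$, it compares at the cost intersections $a_j$ with a separately tuned weight. This uses the strict bound $B_\mu(\eta)>\sqrt\mu\,G(\eta/\sqrt\mu)$ and $D_\mu(\eta)\le\eta^2/2-\eta/3$.
\item Small levels use the anchor $a^\circ=\alpha_\mu(1+1/\mu)$.
\item The identity $B_\mu+D_\mu=\alpha_\mu$ itself rests on a finite-horizon Spitzer identity.
\end{itemize}

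\emph{Smaller points.}
\begin{itemize}
\item At $h=0$ and $K>0$, base stock still pays setup at rate near $K\lambda$, so its cost does not go to zero. Use $(0,Q)$ with $Q\to\infty$ instead.
\item The all-policy batching bound must be free of $Q$. The paper proves $\theta\E\int_0^T X\,dt+k\E O(T)\ge\sqrt{2k\theta}\,\E\mathcal A(T)$, which corresponds to a holding charge of at least $\theta Q/2$ per unit served from a lot of $Q$. Your $(Q-1)/2$ loses $\theta/2$ and no longer matches $b_k\le\sqrt{2k}$.
\item The bound $L_S(a)+b_k$ needs a sample-path coupling showing that the batched rule sells at least as much as level-$S$ base stock.
\end{itemize}
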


For $h,\lambda>0$, dividing cost by $h$ gives the equivalent normalized inequalities $U_\mu(a)\le\sqrt2\operatorname{OPT}_0(\mu,a)$ and $C_{\rm rQ}(\mu,a,k)\le R\operatorname{OPT}_k(\mu,a)$, where $R=(\sqrt2+\sqrt6)/2$ and $C_{\rm rQ}(\mu,a,k)$ denotes the infimum over $\Pi_{\rm rQ}$. The following corollary gives a finite policy attaining the full-cost guarantee without knowing $\OPT$.

\begin{corollary}[{\sc An Explicit Policy Selection}]\label{cor:policy-selection}
Suppose $h,\lambda>0$. Compute a cost-minimizing integer base-stock level $S^\star$ at zero setup cost and set
\[
Q_k:=\max\{1,\lfloor\sqrt{2k}\rfloor\},\qquad
b_k:=\frac{Q_k-1}{2}+\frac{k}{Q_k}.
\]
If $S^\star\ge1$ and $U_\mu(a)+b_k<a$, choose $(r,Q)=(S^\star-1,Q_k)$; otherwise choose no replenishment. The selected policy has cost at most $R\OPT$, with $R=\sqrt2$ when $K=0$ and $R=(\sqrt2+\sqrt6)/2$ for general $K$. At $K=0$, it is optimal among integer base-stock policies, including level zero.
\end{corollary}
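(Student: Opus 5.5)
The plan is to prove the corollary in three pieces: bound the selected policy's cost explicitly, compare that bound with the two lower bounds on $\OPT_k(\mu,a)$ that the paper develops (the concave zero-setup bound $\mathcal L_\mu$ and the batching bound behind $R(c)$), and treat $K=0$ separately.

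\textbf{Step 1: cost of the selected policy.} Fix $S^\star\ge1$ and $Q=Q_k$, and show that $\pi_{\rm rQ}(S^\star-1,Q_k)$ costs at most $U_\mu(a)+b_k=L_{S^\star}(a)+b_k$. I would use the standard decomposition of an $(r,Q)$ system. Under the classical policy, the inventory position cycles through $r+1,\dots,r+Q$, spending equal fractions of served demands at each level (Poisson compensation, Lemma~\ref{lem:poisson-compensation}, turns epoch counts into rates). Each block of $Q$ units then behaves like a base-stock system whose level is shifted upward by the cycle offset.

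Losses are at most those of level $S^\star$, because the position never drops below $S^\star$ when an order is due. Under lost sales a unit is consumed only while on-hand is positive, so I would argue by a sample-path coupling. At every time, on-hand under $(r,Q)$ is at least on-hand under base stock $S^\star$. The extra on-hand is at most the excess of position over $S^\star$, which averages $(Q-1)/2$ per unit time. Orders occur at rate (served demand)$/Q\le 1/Q$, so setup contributes at most $k/Q$. The coupling monotonicity is the step I expect to be the main obstacle: the two systems lose different demands, so the excess position is not simply additive. I would handle it by tracking the pipeline in the order the units were committed, in the same way as the paper's lower-bound construction. As a fallback, if the direct coupling fails, I would bound the $(r,Q)$ cost through the exact renewal structure instead.

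\textbf{Step 2: comparison with $\OPT$.} The algorithm's cost is $\min\{a,\;U_\mu(a)+b_k\}$, using the first option only when $S^\star\ge1$ (if $S^\star=0$, then $U_\mu(a)=a$ and the minimum is $a$ anyway). It therefore suffices to show
\[
\min\{a,\,U_\mu(a)+b_k\}\le R\,\OPT_k(\mu,a),
\]
which is exactly the normalized form of the theorem's full-cost guarantee once the theorem's proof is seen to use these particular policies. I would reproduce that argument. The batching lower bound allocates holding and setup costs: any admissible policy serving a fraction $s$ of demand pays at least about $s\sqrt{2k}$-type setup-plus-cycle-stock cost. The zero-setup bound with an adjusted penalty covers the remainder, and $U_\mu\le\sqrt2\,\mathcal L_\mu$ on the adjusted penalty combines with it to give the root $R(\sqrt2)$ of $x^2-\sqrt2x-1=0$. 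I must also check that the integer rounding of $Q_k$ loses nothing: $b_k$ is within the EOQ slack the lower bound permits. Here $b_k\le\sqrt{2k}$ holds for integer $Q_k=\lfloor\sqrt{2k}\rfloor\ge1$, and I would verify this directly.

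\textbf{Step 3: the case $K=0$.} Here $Q_k=1$ and $b_k=0$, so the algorithm picks $\pi_{\rm S}(S^\star)$ when $U_\mu(a)<a$ and no replenishment otherwise. In both cases its cost is $U_\mu(a)$, the minimum over all $j\ge0$ in \eqref{eq:base-stock-cost}, since level zero has cost $a$. This gives optimality within $\Pi_{\rm S}$, and the $\sqrt2$ bound follows from \eqref{eq:article-operating}. Multiplying by $h$ returns the statement to original units.
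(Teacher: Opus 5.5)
\paragraph{Step 1 does not go through as written.} The problem is larger than the coupling you flag: the separate bounds you propose for the three cost components are false, not merely hard to prove. Take $\mu=1/2$, $S=1$, $Q=2$. Then $r=0<Q$ and the policy is an exact renewal process. Each cycle is a lead time with nothing on hand followed by two served demands, so mean on-hand is $3/(2+\mu)=6/5$, whereas $H_1+\tfrac12=\tfrac23+\tfrac12=\tfrac76$. So extra on-hand does not average at most $(Q-1)/2$. The reason is that position is uniform over served-demand epochs, not over calendar time. During a stockout the position equals the pipeline, the unique multiple of $Q$ in $\{S,\dots,S+Q-1\}$, which can be $S+Q-1$.

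Your pathwise claims fail on the same example. Start from the prescribed states (two units and one unit on hand) and let demands arrive at $t_1<t_2<t_1+\mu$. Base stock $1$ reorders at $t_1$ and loses the demand at $t_2$, while the batched system serves both and orders its lot at $t_2$. On $(t_1,t_2)$ the batched system holds one extra unit although its position equals $S$. On $(t_1+\mu,t_2+\mu)$ base stock holds one unit and the batched system none. Your renewal fallback covers only $r<Q$; for $r\ge Q$ several lots are outstanding.

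\paragraph{The missing idea.} The excess stockout-time position has to be paid for by setup savings, since orders occur at rate $(1-\ell)/Q$ rather than $1/Q$. This works only because $Q_k$ rounds $\sqrt{2k}$ \emph{down}. The paper uses the potential $\Psi=Y(Y+1)/2-\gamma Y-\mathcal W$, with $\gamma=S-\mu+b_Q$ and $\mathcal W$ the total remaining delivery time. This gives the exact identity $J_T=\gamma T+\E\int_0^T(a+Y-\gamma)\mathbf 1_{\{X=0\}}\,dt+O(1)$. Since $Y\le S+Q-1$, the stockout coefficient is at most $a+\mu-d_Q$, where $d_Q=k/Q-(Q-1)/2$. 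Moreover $d_{Q_k}\ge0$ because $Q_k(Q_k-1)\le2k$. This is a second role of the rounding beyond $b_k\le\sqrt{2k}$. Without it the bound $L_S(a)+b_Q$ is false: the example above with $a=k=0$ gives $6/5>7/6$.

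The loss bound $\ell_T\le e_S$ must come from a coupling of cumulative sales, $\mathcal A_Q(t)\ge\mathcal A_1(t)$, proved by induction over demand epochs using supply accounting at $(t-\mu)^+$. It cannot come from comparing on-hand inventory.

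\paragraph{Steps 2 and 3.} Step 2 follows the paper's conversion, but ``about $s\sqrt{2k}$'' must be made exact. You need $\theta\E\int_0^TX\,dt+k\E O(T)\ge\sqrt{2k\theta}\,\E\mathcal A(T)$ for every admissible policy. The paper proves this with the potential $-\sum_{i=1}^{X}(\sqrt{2k\theta}-\theta i)^+\in[-k,0]$. You also need the split $\theta=R^{-2}$ when $a>R\sqrt{2k}$ (so that the adjusted penalty is at least $a$), and $\theta=1$ otherwise. Step 3 is fine.
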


The batching argument proves that the replenishment candidate costs at most $U_\mu(a)+b_k$, with $b_k\le\sqrt{2k}$. Comparing this bound with the no-order cost $a$ gives the algorithm whose performance is shown in Figure~\ref{fig:performance-overview}. A factor of $1.932$ means that its cost is at most $93.2\%$ above the unrestricted optimum. Theorem~\ref{thm:article-main} treats zero holding cost as an inequality between infima; a finite minimizing policy need not exist there.

\subsection{Proof Structure}\label{sec:proof-roadmap}
We organize the proof around the two decisions in the algorithm.
\begin{enumerate}[(i)]
\item \emph{Bound the zero-setup cost of every policy.} In Section~\ref{sec:lower}, we construct a concave function $\mathcal L_\mu(a)$ below $\operatorname{OPT}_0(\mu,a)$. Tracking committed units ensures that this bound applies even to history-dependent replenishment.
\item \emph{Find an integer base-stock level within $\sqrt2$ of that bound.} Section~\ref{sec:comparison-plan} compares base-stock costs with $\mathcal L_\mu$ at selected penalties. Concavity fills the intervals between those penalties, proving $U_\mu\le\sqrt2\mathcal L_\mu$.
\item \emph{Include setup costs.} Section~\ref{sec:batching} bounds both the cost of batching the selected base-stock policy and the holding/setup cost that every policy must incur. Matching these bounds gives $R^2=\sqrt2R+1$, hence $R=(\sqrt2+\sqrt6)/2$.
\end{enumerate}
The main text explains the inequalities connecting these steps. The appendices supply the probability comparisons and finite-horizon cost calculations. The Gaussian comparisons used in the appendices are one-sided inequalities valid at every finite $\mu$.

\subsection{A Lower Bound from Committed Supply}\label{sec:lower}\label{sec:committed-supply}

\subsubsection{Rank and remaining delivery time.}\label{sec:rank-potential}
For the lower-bound proof, we may start with an empty system: Appendix~\ref{app:initial-supply} shows that removing any finite initial supply changes service by only a sublinear expected amount. All subsequent residual delivery times therefore lie in $[0,\mu]$.

Fix a time and look ahead by one lead time. Every unit that can arrive within that interval has already been ordered. We list the units on hand and in transit in the order in which they can serve customers, breaking delivery ties deterministically. We track a unit's position in the service order, which we call its \emph{rank}. Units on hand have remaining delivery time zero; pending units have ordered remaining times in $[0,\mu]$. Deterministic lead time ensures that a new order appends units to this list at remaining time $\mu$.

\begin{figure}[!t]
\SingleSpaced
\centering
\input{ls_1932_figures/rank_unit_values.tikz.tex}
\caption{Committed supply, unit values, and their rates of change}
\label{fig:rank-supply}
\begin{minipage}{0.98\linewidth}
\SingleSpaced\footnotesize\emph{Notes.} Each box shows the unit's rank on the top line and its remaining delivery time on the bottom line; solid borders indicate on-hand units and dashed borders pending units. Columns follow the same units through two instantaneous events. In panel (b), circles mark their values before and after the sale, and diamonds mark newly ordered units. Vertical arrows show rank changes. The dotted line marks $B_2(3)\approx2.188$; the marked minimum at $\tau=\mu$ is $B_2(3)-g=V_3(\mu)$. In the rate plot, squares mark positive individual maxima, whose sum is $D_2(3)\approx2.582$; ranks $i\ge4$ have nonpositive rates. The auxiliary function $V_0$ has no corresponding unit.
\end{minipage}
\end{figure}

Figure~\ref{fig:rank-supply}(a) shows how a sale and a new order change this list. A sale removes the first unit and reduces every subsequent rank by one. Between events, each pending unit moves closer to delivery. Because deliveries never cross, nothing else happens to the list: ranks shift down by exactly one at each sale, remaining times decrease at unit rate, and new units join at the end. We assign a value to each unit according to its rank and remaining delivery time, and sum these values in a potential function. The values are chosen so that, while inventory is positive, the two changes a pending unit undergoes cancel each other exactly.

Fix a proposed lower bound $g$ on average zero-setup cost and an auxiliary penalty $\eta\ge1$. Assign to a unit of rank $i\ge1$ and remaining delivery time $\tau$ the value
\[
V_i(\tau):=G_i(\tau)+\eta\Prb(N(\tau)\ge i)-g,
\qquad V_0(\tau):=\eta-g.
\]
Recall that $G_i(\tau)=\E[(i-N(\tau))^+]$. The two terms before $-g$ measure the expected excess of the rank over demand before delivery and the weighted probability that demand reaches the rank. The upper plot in Figure~\ref{fig:rank-supply}(b) shows these values of $V_i(\tau)$, with remaining delivery time decreasing from left to right. Between events, a pending unit moves right along the curve for its current rank. A sale moves each surviving unit to the curve for the next lower rank at the same remaining time. The new order adds two units at $\tau=\mu$.

While inventory is positive, every demand is served, and a pending unit of rank $i$ experiences two effects. Random sales, which arrive at expected rate one, move it from the curve for rank $i$ to the curve for rank $i-1$ at its current remaining time, and the passage of time moves it along its current curve toward delivery, changing its value at rate $-V_i'(\tau)$. Since $G_i-G_{i-1}=F_{i-1}$, the values satisfy
\[
V_i'(\tau)=V_{i-1}(\tau)-V_i(\tau),\qquad i\ge2,
\]
so the expected change from rank shifts is exactly offset by the aging drift, whatever the unit's remaining time. The positive-inventory part of the argument therefore requires no knowledge of when pending units will arrive. For the $I$ units on hand, $V_i(0)=i-g$: a sale removes the first unit, of value $1-g$, and lowers each other on-hand value by one, so these units contribute $g-I$ to the change at a sale; with sales at expected rate one, holding cost at rate $I$ brings the total rate to $g$. The service order itself matters only through two facts: ranks are well defined, and because the deterministic lead time rules out order crossing, each sale lowers every rank by exactly one.

Two events are not covered by this cancellation, and each gives one of the two quantities in the lower bound. The first is an order. A newly ordered unit enters the list with remaining time $\mu$ and, at rank $l$, carries value $V_l(\mu)$; nothing offsets its entry, so we require its value to be nonnegative. Newly added units have nonnegative value exactly when $g$ is at most
\begin{equation}\label{eq:rank-minimum}
B_\mu(\eta):=g+\min_{l\ge0}V_l(\mu)=\min_{l\ge0}\{G_l(\mu)+\eta\Prb(N(\mu)\ge l)\}.
\end{equation}
Here $l$ is an integer rank\footnote{Rank zero contributes $\eta$, while rank one contributes $\eta-(\eta-1)e^{-\mu}\le\eta$, so including rank zero leaves the minimum unchanged. The rank-zero term $\eta$ is the starting term of the telescoping sum in Appendix~\ref{app:calibration-proof}, which expresses $\eta-B_\mu(\eta)$ as a sum of rank increments.}, and the $-g$ in $V_l$ cancels the leading $g$, so $B_\mu(\eta)$ does not depend on $g$. Thus, $B_\mu(\eta)-g$ is the smallest value a new unit can carry, and $g\le B_\mu(\eta)$ guarantees that ordering never lowers the sum of unit values.

The second is a stockout. With no unit on hand, demand is lost and leaves all ranks unchanged, so the aging drift of every pending unit goes uncompensated and the assigned values change at total rate $-\sum_i V_i'(\tau_i)$. Define the greatest possible rate of decrease by
\begin{equation}\label{eq:schedule-supremum}
D_\mu(\eta):=\sup_{m\ge0,\,0\le\tau_1\le\cdots\le\tau_m\le\mu}
\sum_{i=1}^m V_i'(\tau_i),
\end{equation}
where the derivative is $V_i'(\tau)=\eta p_{i-1}(\tau)-F_{i-1}(\tau)$. The supremum covers all numbers of possible pending units and delivery schedules, including the empty sum and simultaneous deliveries. In the system, the realized configuration $(\tau_1(t),\ldots,\tau_{m(t)}(t))$ at time $t$ is random: a unit ordered at time $s$ has remaining time $\mu-(t-s)$, so the configuration is determined by past order epochs, which under a history-dependent policy depend on earlier sales and lost demands. But the supremum does not: at every time, in every realization, and under every admissible policy, the current configuration is one of the schedules admitted in the supremum, so its rate of decrease $\sum_iV_i'(\tau_i)$ is at most $D_\mu(\eta)$. Hence, the definition takes a supremum rather than an expectation, and $D_\mu(\eta)$ is the largest rate at which aging can lower the values in any configuration. At a stockout, lost-sales cost accrues at rate $a$, while approaching deliveries can decrease the potential function at rate at most $D_\mu(\eta)$, so cost plus the change in the potential function has rate at least $a-D_\mu(\eta)$. Requiring $a-D_\mu(\eta)\ge g$ says that the lost-sales penalty covers the uncompensated drift.

Select $g_\eta:=\min\{B_\mu(\eta),a-D_\mu(\eta)\}$, the largest value meeting the two requirements imposed by the events not covered by the cancellation. Use $g=g_\eta$ in the unit values defined above. A nonpositive cost lower bound is vacuous, so suppose $g_\eta>0$. Choose an integer $M\ge\max\{1,\mu+g_\eta\}$ and let $\Phi_M$ be the sum of $V_i(\tau_i)$ over the first $M$ committed units (all units if fewer). This bounded potential function gains only nonnegative retained terms at orders, because $g_\eta\le B_\mu(\eta)$, and does not jump at deliveries, since a delivered unit keeps its rank and $V_i$ is continuous.

Let $\Delta_{\rm dem}\Phi_M(t)$ denote the change in the potential function that a demand at time $t$ would cause, computed before any resulting order adds new units. We record the demand and any order placed immediately afterward as two separate changes. Let $\dot\Phi_M(t)$ denote the change per unit time as pending deliveries approach. Since expected demand has rate one, the jump $\Delta_{\rm dem}\Phi_M$ is also the expected change from demand per unit time, so the following calculation is in expectation: the holding and loss rates plus the demand and aging contributions satisfy
{\renewcommand{\arraystretch}{0.6}
\[
I+a\mathbf1_{\{I=0\}}+\Delta_{\rm dem}\Phi_M+\dot\Phi_M
\ge
\left\{\begin{array}{@{}l@{\quad}l@{}}
I+(g_\eta-I)=g_\eta, & 1\le I\le M,\\
I\ge g_\eta, & I>M,\\
a-D_\mu(\eta)\ge g_\eta, & I=0.
\end{array}\right.
\]}%
The holding-cost rate is $I$ because its coefficient has been normalized to one.\footnote{Average cost has been divided by the original holding-cost rate $h$, so $I$ units on hand incur normalized holding cost at rate $I$, corresponding to $hI$ in the original units.}
Appendix~\ref{app:rank-potential} verifies these three cases. Since expected demand has rate one, Lemma~\ref{lem:poisson-compensation} identifies $\E\int_0^T\Delta_{\rm dem}\Phi_M(t)\,dt$ as the expected sum of the changes caused by processing demand before any resulting orders. Integrating the rate bound then gives
\[
\begin{aligned}
&\E\int_0^T\!\bigl[I(t)+a\mathbf1_{\{I(t)=0\}}\bigr]\,dt+\E[\Phi_M(T)-\Phi_M(0)]\\
&\quad\ge\E\int_0^T\!\bigl[I(t)+a\mathbf1_{\{I(t)=0\}}\bigr]\,dt+\E\int_0^T\!\bigl[\Delta_{\rm dem}\Phi_M(t)+\dot\Phi_M(t)\bigr]\,dt
\ge g_\eta T.
\end{aligned}
\]
The first integral is expected holding and lost-sales cost. The second integral includes the expected changes from demand and approaching deliveries. The total expected change in the potential function, $\E[\Phi_M(T)-\Phi_M(0)]$, also includes the expected sum of the nonnegative increases when orders are placed, whether immediately after demand or at other times. Adding this nonnegative expected sum gives the first inequality. Since $\Phi_M$ is bounded, $\E[\Phi_M(T)-\Phi_M(0)]/T$ approaches zero as $T\to\infty$, yielding the following result.

\begin{proposition}[{\sc Committed-Supply Lower Bound}]\label{prop:rank-lower}
For every $a\ge0$ and $\eta\ge1$,
\begin{equation}\label{eq:committed-supply-bound}
\operatorname{OPT}_0(\mu,a)\ge\min\{B_\mu(\eta),a-D_\mu(\eta)\}.
\end{equation}
\end{proposition}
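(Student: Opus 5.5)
The plan is to turn the potential-function sketch above into a rigorous drift argument, with $g=g_\eta$ fixed and $g_\eta>0$ assumed; otherwise the bound is trivial because costs are nonnegative.

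\textbf{Step 1: reduce to a convenient starting state.} By Appendix~\ref{app:initial-supply}, I may start from an empty system at a sublinear expected cost. Every committed unit then has residual time in $[0,\mu]$, and the rank list is well defined. Since there is no order crossing, a sale lowers every rank by one, time lowers every residual at unit rate, and new units are appended at residual time $\mu$.

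\textbf{Step 2: verify the three rate cases pointwise.} I would establish the displayed inequality for every configuration, deferring details to Appendix~\ref{app:rank-potential}.
\begin{itemize}
\item Case $1\le I\le M$. The on-hand units have values $V_i(0)=i-g$, so a sale changes their total by $g-I$. For a pending unit of rank $i\le M$, the sale moves its value to $V_{i-1}(\tau)$. Using $G_i-G_{i-1}=F_{i-1}$ and $\frac{d}{d\tau}\Prb(N(\tau)\ge i)=p_{i-1}(\tau)$, I get $V_i'=V_{i-1}-V_i$. Hence the demand jump and the aging drift $-V_i'$ cancel exactly.
\item Truncation at rank $M$. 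The unit of rank $M+1$ enters the window with value $V_M(\tau)$. I need this value to be nonnegative, or at least I need to control its sign. Since $V_M(\tau)\ge G_M(\tau)-g\ge M-\tau-g\ge M-\mu-g\ge 0$, the choice $M\ge\mu+g_\eta$ ensures that entering units never decrease $\Phi_M$.
\item Case $I>M$. All $M$ counted units are on hand, so the sale changes $\Phi_M$ by at most zero plus the entry term just controlled. The holding rate satisfies $I\ge M\ge g$.
\item Case $I=0$. Demand is lost and ranks are unchanged. The drift is $-\sum_iV_i'(\tau_i)\ge -D_\mu(\eta)$, because the current configuration is an admissible schedule in \eqref{eq:schedule-supremum}. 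The truncated sum is itself such a schedule with $m\le M$.
\end{itemize}

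\textbf{Step 3: orders and deliveries.} At an order, the appended units within rank $M$ have values $V_l(\mu)\ge B_\mu(\eta)-g\ge0$ by \eqref{eq:rank-minimum}. Units beyond rank $M$ are not counted. Deliveries cause no jump, since residual times pass continuously through $0$ and each $V_i$ is continuous.

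\textbf{Step 4: integrate and pass to the limit.} $\Phi_M$ is a sum of at most $M$ terms, each bounded by $M+\eta+|g|$. So $\Phi_M$ is bounded, the jump process $\psi(t)=\Delta_{\rm dem}\Phi_M(t-)$ is predictable and bounded, and Lemma~\ref{lem:poisson-compensation} converts the expected sum of demand jumps into $\E\int_0^T\psi\,dt$. Between events, $\Phi_M$ is absolutely continuous. Its total change therefore decomposes as demand jumps plus drift plus order jumps, and the order jumps are nonnegative. This yields the integrated inequality. Dividing by $T$ and using boundedness of $\Phi_M$ gives a liminf of average holding-plus-loss cost of at least $g_\eta$ for every admissible policy, and hence $\OPT_0(\mu,a)\ge g_\eta$.

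\textbf{Main obstacle.} The expected hard part is the truncation bookkeeping in Step 2 together with the justification that the lim sup objective dominates. I would handle the latter by noting that the finite-horizon inequality holds for each $T$. A secondary subtlety is policies with infinite expected cost. These are trivially fine, and finite-horizon finite cost is assumed whenever $a>0$.
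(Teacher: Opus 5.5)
Your proposal is correct and follows the paper's proof essentially step for step. The shared steps are the empty-start reduction, the truncated rank potential $\Phi_M$ with $M\ge\max\{1,\mu+g_\eta\}$, the three-case rate inequality (including the nonnegative entry term $V_M(\tau_{M+1})\ge M-\mu-g_\eta$), nonnegative order jumps, and Poisson compensation applied to a bounded potential. Two small points of polish: in the case $I>M$ the demand jump of $\Phi_M$ is exactly zero, since the loss $g-M$ is restored by the entering on-hand unit with $V_M(0)=M-g$; and the order jumps are integrable because the pathwise telescoping identity bounds their sum by a constant times $1+T+N(T)$, so splitting $\E[\Phi_M(T)-\Phi_M(0)]$ into demand, drift, and order parts is legitimate even when $\E O(T)=\infty$.
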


Appendix~\ref{app:rank-potential} completes the finite-horizon argument. Appendix~\ref{app:initial-supply} also shows that finite initial supply does not affect the bound.

\subsubsection{Balancing the two requirements.}\label{sec:calibration}
The weight $\eta$ trades one requirement against the other. Both $B_\mu(\eta)$ and $D_\mu(\eta)$ are nondecreasing in $\eta$, so a larger weight raises $B_\mu(\eta)$ but lowers $a-D_\mu(\eta)$. For a chosen $\eta$, setting $a=B_\mu(\eta)+D_\mu(\eta)$ makes both terms in \eqref{eq:committed-supply-bound} equal to $B_\mu(\eta)$, and the following identity expresses this balancing penalty as an expected maximum.

\begin{proposition}[{\sc Exact Penalty Calibration}]\label{prop:calibration}
For $\eta\ge1$, define
\begin{equation}\label{eq:calibration-map}
\alpha_\mu(\eta):=\eta\left[1+\E\sup_{0\le t\le\mu}\{(1-1/\eta)t-N(t)\}\right].
\end{equation}
Then
\begin{equation}\label{eq:calibration-identity}
B_\mu(\eta)+D_\mu(\eta)=\alpha_\mu(\eta).
\end{equation}
The function $\alpha_\mu$ is a continuous strictly increasing bijection from $[1,\infty)$ onto itself.
\end{proposition}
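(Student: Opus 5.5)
The plan is to compute both $B_\mu(\eta)$ and $D_\mu(\eta)$ in closed form as sums over ranks of the single-rank functions
\[
f_i(\tau):=V_i'(\tau)=\eta p_{i-1}(\tau)-F_{i-1}(\tau),
\]
and then to match the result with the expected maximum in \eqref{eq:calibration-map}. Throughout, write $c:=1-1/\eta\in[0,1)$.

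\emph{Step 1: reduce $D_\mu(\eta)$ to individual maxima.} For $i=1$ we have $f_1(\tau)=(\eta-1)e^{-\tau}$, which is maximized at $\tau=0$ with value $\eta-1$. For $i\ge2$, use $p_{i-1}'=p_{i-2}-p_{i-1}$ and $F_{i-1}'=-p_{i-1}$. These give
\[
f_i'(\tau)=\eta p_{i-2}(\tau)-(\eta-1)p_{i-1}(\tau).
\]
This derivative changes sign exactly once, from positive to negative, at $\tau_i^*=(i-1)/c$. Hence $f_i$ is unimodal on $[0,\infty)$, and on $[0,\mu]$ its maximizer is $\min\{\tau_i^*,\mu\}$, which is nondecreasing in $i$. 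The ordering constraint $\tau_1\le\cdots\le\tau_m$ in \eqref{eq:schedule-supremum} is therefore not binding. Placing each unit at its own maximizer and keeping only ranks with positive maximum gives
\[
D_\mu(\eta)=\sum_{i\ge1}\Bigl(\max_{0\le\tau\le\mu}f_i(\tau)\Bigr)^+ .
\]
One must also check that the set of ranks with positive maximum is an initial segment, so that the supremum is attained with finite $m$. I expect this to follow from $f_i(\tau)\le \eta p_{i-1}(\tau)-F_{i-1}(\tau)$ becoming negative for large $i$ uniformly on $[0,\mu]$.

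\emph{Step 2: telescope $B_\mu(\eta)$.} Since $G_l-G_{l-1}=F_{l-1}$, the increment of $G_l(\mu)+\eta\Prb(N(\mu)\ge l)$ from rank $l-1$ to rank $l$ is $-f_l(\mu)$. Starting from the rank-zero term $\eta$, this yields
\[
B_\mu(\eta)=\eta-\max_{l\ge0}\sum_{i=1}^l f_i(\mu).
\]
The terms $f_i(\mu)$ have a single sign change in $i$ (a Poisson likelihood-ratio argument), so the maximizing $l^*$ is the last index with $f_l(\mu)>0$. Combining with Step 1,
\[
B_\mu(\eta)+D_\mu(\eta)=\eta+\sum_{i\ge1}\Bigl[\bigl(\max_{[0,\mu]}f_i\bigr)^+-f_i(\mu)\mathbf 1_{\{i\le l^*\}}\Bigr].
\]
The summands vanish for those $i$ whose interior peak $\tau_i^*$ exceeds $\mu$.

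\emph{Step 3 (main obstacle): identify the bracketed sum with $\eta\,\E\sup_{0\le t\le\mu}\{ct-N(t)\}$.} I would argue by differentiation in $\mu$ rather than directly.

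At $\mu=0$ both sides of \eqref{eq:calibration-identity} equal $\eta$. Indeed $B_0=1$, $D_0=\eta-1$ (only rank one contributes), and the supremum is $0$.

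For the right side, the running supremum $Y_\mu:=\sup_{t\le\mu}\{ct-N(t)\}$ increases only by drift while the path sits at its running maximum, and jumps of $N$ only lower it. This gives
\[
\frac{d}{d\mu}\E Y_\mu=c\,\Prb\bigl(c\mu-N(\mu)\ge cs-N(s)\ \forall s\le\mu\bigr).
\]
I would evaluate this probability by a ballot-type (Takács) argument, conditioning on $N(\mu)=j$, as a sum of Poisson weights over $j<c\mu$.

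On the left side, the envelope theorem applies: only ranks with $\tau_i^*\ge\mu$ (boundary maxima) and ranks $i\le l^*$ contribute. The left derivative then equals a finite combination of $f_i'(\mu)=\eta p_{i-2}(\mu)-(\eta-1)p_{i-1}(\mu)$ over the block of ranks with $(i-1)/c\ge\mu$ and $f_i(\mu)>0$. This combination should telescope to the same sum of Poisson weights times $\eta c=\eta-1$. Verifying this match, including the behavior at the finitely many $\mu$ where some $\tau_i^*=\mu$ or $l^*$ changes, is the step I expect to be delicate. Both sides are continuous in $\mu$, so those points do not affect the integrated identity. If the derivative match proves awkward, the fallback is a direct ladder-height computation of $\E Y_\mu=\int_0^\infty\Prb(Y_\mu>y)\,dy$, splitting $y$ by the integer part of $N$ at the crossing time.

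\emph{Step 4: bijection.} Write $\alpha_\mu(\eta)=\eta\bigl(1+\E Y_\mu(c)\bigr)$. Here $Y_\mu(c)\ge0$ is nondecreasing in $c$, and $c$ is increasing in $\eta$, so $\alpha_\mu$ is a product of a strictly increasing positive factor $\eta$ and a nondecreasing factor at least one. Hence $\alpha_\mu$ is strictly increasing. Continuity follows from dominated convergence, using $Y_\mu\le\mu$. Finally, $\alpha_\mu(1)=1\cdot(1+\E\sup_t\{-N(t)\})=1$ and $\alpha_\mu(\eta)\ge\eta\to\infty$, so $\alpha_\mu$ maps $[1,\infty)$ onto itself.
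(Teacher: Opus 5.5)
Your plan works once Step 3 is repaired: it names the wrong block of ranks for the left-side derivative, and it omits a positivity lemma that both Step 1 and Step 3 depend on.

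\textbf{How your route compares with the paper's.} Steps 1--2 and the envelope-theorem differentiation of the left side are the paper's argument. The one genuine difference is on the right side. The paper integrates the left-side derivative to get $\eta+\eta\int_0^\mu t^{-1}\E(ct-N(t))^+\,dt$ and then cites the continuous-time Spitzer identity for $ct-N(t)$. You instead differentiate $\E Y_\mu$ directly: its rate is $c$ times the probability that the path is at its running maximum at time $\mu$. You evaluate that probability by time reversal and Tak\'acs' ballot theorem, which gives conditional probability $(1-j/(c\mu))^+$ given $N(\mu)=j$, hence $\frac{d}{d\mu}\E Y_\mu=\mu^{-1}\E(c\mu-N(\mu))^+$. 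This is sound. It proves exactly the special case of Spitzer's identity that the paper imports, so your version is self-contained where the paper's is shorter.

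\textbf{The wrong block of ranks.} By your own Step 2, the summands with $\tau_i^*\ge\mu$ vanish identically, because their $B$- and $D$-contributions cancel. So summing $f_i'(\mu)$ over the block $(i-1)/c\ge\mu$, $f_i(\mu)>0$ gives $\frac{d}{d\mu}D_\mu$ alone, not the derivative of $B_\mu+D_\mu$. The net derivative comes from the interior ranks $i-1<c\mu$, and only from those with $i\le l^*$.

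\textbf{The missing positivity lemma.} You need every interior rank to satisfy $f_i(\mu)>0$. Otherwise such a rank contributes the $\mu$-independent constant $f_i(\tau_i^*)^+$, and the derivative falls short of the right side. The fact follows from $H_n(\mu)\ge0$: for $n<c\mu$, $e_n(\mu)\ge1-n/\mu>1-c=1/\eta$, so $f_{n+1}(\mu)=F_n(\mu)[\eta e_n(\mu)-1]>0$.

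With it, writing $w_n=f_{n+1}$, you get $B_\mu+D_\mu=\eta+\sum_{n<c\mu}\{w_n(n/c)-w_n(\mu)\}$. Each $-w_n'(\mu)=\eta p_n(\mu)(c-n/\mu)$, and these sum to $\frac{\eta}{\mu}\E(c\mu-N(\mu))^+$ immediately. Nothing is delicate here. Changes in $l^*$ play no role, and when an index joins at $\mu=n/c$ its summand and its derivative are both zero.

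The same lemma is what proves the initial-segment property you flag in Step 1. Interior maxima are at least $f_i(\mu)>0$, and boundary ranks are positive exactly when $i\le l^*$. Your proposed argument, that $f_i$ is eventually negative uniformly on $[0,\mu]$, only gives finiteness of the positive ranks, not that they form an initial segment.

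\textbf{The case $\eta=1$.} Handle it separately, since $\tau_i^*=(i-1)/c$ is undefined when $c=0$. Directly, $D_\mu(1)=0$ and $B_\mu(1)=1$.
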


Inside the supremum in \eqref{eq:calibration-map}, a line of slope $1-1/\eta$ rises between demand arrivals and falls by one at each arrival. Its largest value measures the greatest excess of this auxiliary cumulative supply over demand during one lead time. The maximum includes time zero and is, therefore, nonnegative. The expected maximum gives a convenient way to calculate the balancing penalty.

Appendix~\ref{app:calibration-proof} proves the identity~\eqref{eq:calibration-identity} by maximizing each rank's aging term and summing the increases over its value at remaining time $\mu$. In particular, every penalty $a\ge1$ corresponds to a unique calibrated weight $\eta$, and at that penalty, \eqref{eq:committed-supply-bound} gives the bound $B_\mu(\eta)$. The function $\alpha_\mu$ therefore identifies, for each penalty $a\ge1$, the unique weight $\eta$ that makes the two lower-bound terms equal.

Figure~\ref{fig:calibration-surfaces} shows how $B_\mu(\eta)$ and $D_\mu(\eta)$ vary with the mean lead-time demand and the auxiliary weight. Panel (c) shows the sum $B_\mu(\eta)+D_\mu(\eta)$, which Proposition~\ref{prop:calibration} identifies with $\alpha_\mu(\eta)$, and panel (d) shows sample paths of the auxiliary process in \eqref{eq:calibration-map}, whose expected maximum determines that sum. The marked point in panels (a) to (c), and the parameters of panel (d), are those of Figure~\ref{fig:rank-supply}.

\begin{figure}[htbp]
\SingleSpaced
\centering
\includegraphics[width=.98\linewidth]{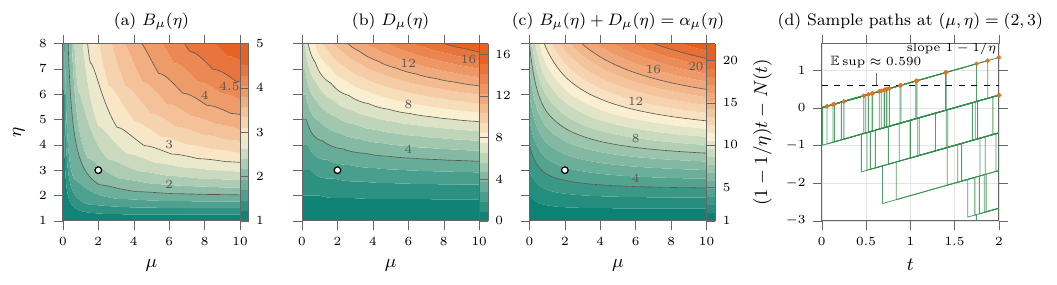}
\caption{$B_\mu(\eta)$, $D_\mu(\eta)$, their sum $\alpha_\mu(\eta)$, and sample paths of the auxiliary process}
\label{fig:calibration-surfaces}
\begin{minipage}{.98\linewidth}
\SingleSpaced\footnotesize\emph{Notes.} Panels (a) to (c) show filled contours over $0\le\mu\le10$ and $1\le\eta\le8$, each with its own color scale; kinks in panel (a) occur where the minimizing rank in \eqref{eq:rank-minimum} changes. Open dots mark $(\mu,\eta)=(2,3)$, where $B_2(3)\approx2.188$, $D_2(3)\approx2.582$, and $\alpha_2(3)\approx4.770$. Panel (d) shows $27$ simulated paths of $(1-1/\eta)t-N(t)$ at $(\mu,\eta)=(2,3)$; the dotted line shows $(1-1/\eta)t$, the path when no demand occurs; filled dots mark the simulated paths' maxima over $[0,\mu]$. The dashed line marks the expected maximum over all demand paths, $\alpha_2(3)/3-1\approx0.590$ from \eqref{eq:calibration-identity}.
\end{minipage}
\end{figure}

\subsubsection{The concave lower curve.}\label{sec:concave-lower-bound}
Define
\begin{equation}\label{eq:concave-lower-bound}
\mathcal L_\mu(a):=\sup_{\eta\ge1}\min\{B_\mu(\eta),a-D_\mu(\eta)\}.
\end{equation}
For each fixed $\eta$, the bound $\min\{B_\mu(\eta),a-D_\mu(\eta)\}$ rises with slope one until it reaches the cap $B_\mu(\eta)$. Figure~\ref{fig:lower-curve-envelope}(a) shows these capped lines and the upper envelope obtained by choosing the best weight at each penalty. Panel (b) shows $U_\mu(a)/\mathcal L_\mu(a)$, an upper bound on the base-stock approximation ratio. Figure~\ref{fig:base-stock-tightness} later provides the complementary lower bound that establishes tightness.

\begin{figure}[htbp]
\SingleSpaced
\centering
\newsavebox{\LowerCurveRatioPanel}
\sbox{\LowerCurveRatioPanel}{\includegraphics[width=.44\linewidth]{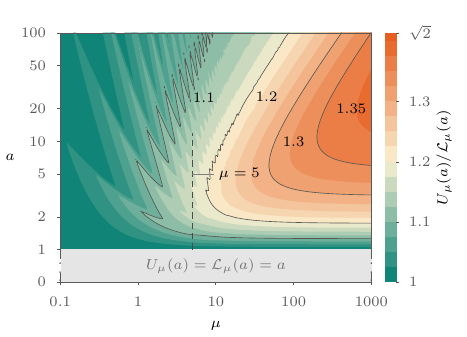}}
\begin{minipage}[t]{0.54\linewidth}
\centering\footnotesize
(a) Lower curve at $\mu=5$ \par\smallskip
\parbox[c][\dimexpr\ht\LowerCurveRatioPanel+\dp\LowerCurveRatioPanel\relax][c]{\linewidth}{\centering
\begin{tikzpicture}[x=.52cm,y=.52cm,font=\scriptsize,>=stealth]
\draw[->,black!70,line width=.6pt] (0,0)--(12.55,0) node[anchor=west] {$a$};
\draw[->,black!70,line width=.6pt] (0,0)--(0,4.18) node[anchor=south] {cost lower bound};
\foreach \x in {0,2,4,6,8,10,12} {\draw[black!55] (\x,0)--(\x,-.07) node[anchor=north] {$\x$};}
\foreach \y in {1,2,3,4} {\draw[black!55] (0,\y)--(-.10,\y) node[anchor=east] {$\y$};}
\begin{scope}
\clip (0,0) rectangle (12,4.1);
\draw[gray!80,dashed,line width=.6pt] (0,-0.00000000)--(1.00000000,1.00000000)--(12,1.00000000);
\draw[gray!80,dashed,line width=.6pt] (0,-1.20600408)--(3.11279581,1.90679173)--(12,1.90679173);
\draw[lsOrng,dash dot,line width=1.0pt] (0,-2.94496219)--(5.50081918,2.55585699)--(12,2.55585699);
\draw[gray!80,dashed,line width=.6pt] (0,-4.97245773)--(8.00191261,3.02945488)--(12,3.02945488);
\draw[gray!80,dashed,line width=.6pt] (0,-7.13273124)--(10.54622547,3.41349423)--(12,3.41349423);
\draw[lsTeal,line width=1.5pt,line join=round] plot coordinates {(0.000000,0.000000) (1.000000,1.000000) (1.029313,1.014607) (1.058625,1.029214) (1.087938,1.043820) (1.117251,1.058427) (1.146563,1.073034) (1.175876,1.087641) (1.205189,1.102248) (1.234501,1.116854) (1.263814,1.131461) (1.293127,1.146068) (1.322439,1.160675) (1.351752,1.175282) (1.381065,1.189888) (1.398652,1.198652) (1.410377,1.204297) (1.439690,1.218408) (1.469003,1.232520) (1.498316,1.246631) (1.527676,1.260742) (1.557134,1.274854) (1.586694,1.288965) (1.616358,1.303076) (1.646128,1.317188) (1.676005,1.331299) (1.705988,1.345411) (1.736078,1.359522) (1.766274,1.373633) (1.796574,1.387745) (1.826976,1.401856) (1.857478,1.415967) (1.888079,1.430079) (1.918775,1.444190) (1.949566,1.458301) (1.969320,1.467333) (1.980447,1.471967) (2.011418,1.484840) (2.042474,1.497712) (2.073614,1.510585) (2.104835,1.523458) (2.136135,1.536331) (2.167511,1.549203) (2.198960,1.562076) (2.230481,1.574949) (2.262071,1.587822) (2.293728,1.600694) (2.325450,1.613567) (2.357234,1.626440) (2.389090,1.639313) (2.421051,1.652186) (2.453116,1.665058) (2.485283,1.677931) (2.517550,1.690804) (2.549914,1.703677) (2.582373,1.716549) (2.614925,1.729422) (2.647566,1.742295) (2.680294,1.755168) (2.713109,1.768040) (2.746006,1.780913) (2.778985,1.793786) (2.812042,1.806659) (2.845177,1.819531) (2.857920,1.824475) (2.878386,1.831133) (2.911668,1.841941) (2.945022,1.852750) (2.978445,1.863558) (3.011936,1.874366) (3.045492,1.885175) (3.079113,1.895983) (3.112796,1.906792) (3.146540,1.917600) (3.180343,1.928409) (3.214205,1.939217) (3.248123,1.950026) (3.282096,1.960834) (3.316123,1.971642) (3.350202,1.982451) (3.384332,1.993259) (3.418512,2.004068) (3.452740,2.014876) (3.487016,2.025685) (3.521339,2.036493) (3.555706,2.047301) (3.590118,2.058110) (3.624573,2.068918) (3.659069,2.079727) (3.693607,2.090535) (3.728185,2.101344) (3.762802,2.112152) (3.797457,2.122961) (3.832150,2.133769) (3.866879,2.144577) (3.901644,2.155386) (3.936444,2.166194) (3.971278,2.177003) (4.006145,2.187811) (4.041045,2.198620) (4.075976,2.209428) (4.110939,2.220237) (4.145932,2.231045) (4.180955,2.241853) (4.216007,2.252662) (4.251088,2.263470) (4.286197,2.274279) (4.311046,2.281923) (4.321341,2.284332) (4.356528,2.292560) (4.391757,2.300788) (4.427026,2.309016) (4.462336,2.317244) (4.497685,2.325472) (4.533073,2.333700) (4.568499,2.341928) (4.603962,2.350156) (4.639462,2.358384) (4.674997,2.366612) (4.710568,2.374840) (4.746173,2.383068) (4.781811,2.391296) (4.817483,2.399524) (4.853188,2.407752) (4.888925,2.415980) (4.924693,2.424208) (4.960491,2.432436) (4.996320,2.440664) (5.032179,2.448892) (5.068067,2.457121) (5.103984,2.465349) (5.139928,2.473577) (5.175901,2.481805) (5.211900,2.490033) (5.247926,2.498261) (5.283979,2.506489) (5.320057,2.514717) (5.356161,2.522945) (5.392289,2.531173) (5.428442,2.539401) (5.464619,2.547629) (5.500819,2.555857) (5.537043,2.564085) (5.573289,2.572313) (5.609559,2.580541) (5.645850,2.588769) (5.682162,2.596997) (5.718497,2.605225) (5.754852,2.613453) (5.791228,2.621681) (5.827624,2.629909) (5.864040,2.638137) (5.900476,2.646365) (5.936931,2.654593) (5.973406,2.662822) (6.009899,2.671050) (6.046410,2.679278) (6.082940,2.687506) (6.119487,2.695734) (6.156052,2.703962) (6.192635,2.712190) (6.229234,2.720418) (6.265851,2.728646) (6.302484,2.736874) (6.339133,2.745102) (6.375798,2.753330) (6.412479,2.761558) (6.449176,2.769786) (6.485887,2.778014) (6.522614,2.786242) (6.559356,2.794470) (6.596113,2.802698) (6.632884,2.810926) (6.669669,2.819154) (6.706468,2.827382) (6.743281,2.835610) (6.769324,2.841429) (6.780108,2.843083) (6.816948,2.848730) (6.853801,2.854378) (6.890667,2.860026) (6.927547,2.865673) (6.964438,2.871321) (7.001343,2.876969) (7.038260,2.882616) (7.075188,2.888264) (7.112129,2.893912) (7.149082,2.899559) (7.186046,2.905207) (7.223022,2.910854) (7.260009,2.916502) (7.297007,2.922150) (7.334017,2.927797) (7.371037,2.933445) (7.408068,2.939093) (7.445109,2.944740) (7.482161,2.950388) (7.519224,2.956036) (7.556296,2.961683) (7.593378,2.967331) (7.630471,2.972979) (7.667573,2.978626) (7.704685,2.984274) (7.741806,2.989921) (7.778937,2.995569) (7.816077,3.001217) (7.853226,3.006864) (7.890385,3.012512) (7.927552,3.018160) (7.964728,3.023807) (8.001913,3.029455) (8.039106,3.035103) (8.076308,3.040750) (8.113518,3.046398) (8.150737,3.052045) (8.187963,3.057693) (8.225198,3.063341) (8.262441,3.068988) (8.299691,3.074636) (8.336950,3.080284) (8.374216,3.085931) (8.411490,3.091579) (8.448771,3.097227) (8.486059,3.102874) (8.523355,3.108522) (8.560659,3.114169) (8.597969,3.119817) (8.635287,3.125465) (8.672611,3.131112) (8.709942,3.136760) (8.747280,3.142408) (8.784625,3.148055) (8.821977,3.153703) (8.859335,3.159351) (8.896700,3.164998) (8.934071,3.170646) (8.971449,3.176293) (9.008832,3.181941) (9.046222,3.187589) (9.083619,3.193236) (9.121021,3.198884) (9.158429,3.204532) (9.195844,3.210179) (9.233264,3.215827) (9.270690,3.221475) (9.308121,3.227122) (9.345559,3.232770) (9.383002,3.238417) (9.420450,3.244065) (9.457905,3.249713) (9.495364,3.255360) (9.532829,3.261008) (9.570299,3.266656) (9.607775,3.272303) (9.645256,3.277951) (9.682742,3.283599) (9.720233,3.289246) (9.757729,3.294894) (9.795230,3.300541) (9.832736,3.306189) (9.870247,3.311837) (9.907763,3.317484) (9.945283,3.323132) (9.982809,3.328780) (10.020339,3.334427) (10.057873,3.340075) (10.095412,3.345723) (10.132956,3.351370) (10.170505,3.357018) (10.208057,3.362665) (10.245614,3.368313) (10.283176,3.373961) (10.320742,3.379608) (10.358312,3.385256) (10.395886,3.390904) (10.433465,3.396551) (10.471048,3.402199) (10.508635,3.407847) (10.546225,3.413494) (10.583821,3.419142) (10.621423,3.424790) (10.659031,3.430437) (10.696644,3.436085) (10.734263,3.441732) (10.771888,3.447380) (10.809518,3.453028) (10.847154,3.458675) (10.884795,3.464323) (10.922442,3.469971) (10.960094,3.475618) (10.997751,3.481266) (11.035414,3.486914) (11.073082,3.492561) (11.089983,3.495095) (11.110755,3.497023) (11.148434,3.500520) (11.186117,3.504018) (11.223805,3.507515) (11.261499,3.511012) (11.299197,3.514510) (11.336901,3.518007) (11.374609,3.521504) (11.412322,3.525002) (11.450039,3.528499) (11.487762,3.531996) (11.525489,3.535494) (11.563221,3.538991) (11.600957,3.542488) (11.638698,3.545985) (11.676443,3.549483) (11.714193,3.552980) (11.751948,3.556477) (11.789706,3.559975) (11.827470,3.563472) (11.865237,3.566969) (11.903009,3.570467) (11.940785,3.573964) (11.978565,3.577461) (12.000000,3.579445)};
\end{scope}
\draw[black!40,densely dotted,line width=.5pt] (5.50081918,-.86)--(5.50081918,2.55585699);
\fill[lsOrng] (5.50081918,2.55585699) circle (1.6pt);
\node[anchor=north,inner sep=2pt] at (5.50081918,-.86) {$\alpha_5(3)$};
\draw[black!55,line width=.4pt] (12,1.00000000)--(12.35,0.75);
\node[anchor=west,inner sep=2pt,text=black!55] at (12.35,0.75) {$\eta=1$};
\draw[black!55,line width=.4pt] (12,1.90679173)--(12.35,1.60);
\node[anchor=west,inner sep=2pt,text=black!55] at (12.35,1.60) {$\eta=2$};
\draw[lsOrng,line width=.4pt] (12,2.55585699)--(12.35,2.40);
\node[anchor=west,inner sep=2pt,text=lsOrng] at (12.35,2.40) {$\eta=3$};
\draw[black!55,line width=.4pt] (12,3.02945488)--(12.35,3.15);
\node[anchor=west,inner sep=2pt,text=black!55] at (12.35,3.15) {$\eta=4$};
\draw[black!55,line width=.4pt] (12,3.41349423)--(12.35,3.95);
\node[anchor=west,inner sep=2pt,text=black!55] at (12.35,3.95) {$\eta=5$};
\node[anchor=south,text=lsTeal,inner sep=2pt] at (1.6,1.6) {$\mathcal L_5(a)=a$};
\draw[lsTeal!60,line width=.3pt] (1.6,1.6) arc[start angle=90,end angle=180,radius=1.1];
\node[anchor=south,text=lsTeal,align=center,inner sep=2pt] at (7.95,3.86) {$\mathcal L_5(a)$: best bound\\over all $\eta\ge1$};
\draw[lsTeal!60,line width=.3pt] (7.95,3.86) arc[start angle=90,end angle=0,radius=.7];
\node[text=lsOrng,rotate=45,inner sep=1.5pt] at (4.55,1.12) {$a-D_5(3)$};
\node[text=lsOrng,anchor=north] at (8.6,2.52) {$B_5(3)$};
\node[text=black!55,anchor=west] at (8.1,.43) {$\mu=5$};
\end{tikzpicture}}
\end{minipage}\hfill
\begin{minipage}[t]{0.44\linewidth}
\centering\footnotesize
(b) Base-stock cost relative to the lower bound \par\smallskip
\usebox{\LowerCurveRatioPanel}
\end{minipage}
\caption{The concave lower curve and its comparison with base-stock cost}
\label{fig:lower-curve-envelope}
\begin{minipage}{0.98\linewidth}
\SingleSpaced\footnotesize\emph{Notes.} In panel (a), thin curves show $\min\{B_5(\eta),a-D_5(\eta)\}$; the dash-dot curve uses $\eta=3$ and meets its cap at $\alpha_5(3)$. The thick envelope optimizes over $\eta\ge1$; both axes have equal scales. Panel (b) numerically evaluates $U_\mu(a)/\mathcal L_\mu(a)$, an upper bound on the base-stock approximation ratio. Both axes are logarithmic outside the compressed strip $0\le a\le1$, where $U_\mu(a)=\mathcal L_\mu(a)=a$ (the ratio is undefined at zero). The dashed $\mu=5$ segment links to panel (a) over $1\le a\le12$. Tightness is proved in Section~\ref{sec:article-sharpness}.
\end{minipage}
\end{figure}

Propositions~\ref{prop:rank-lower} and \ref{prop:calibration} give
\begin{equation}\label{eq:lower-bound-comparison}
\mathcal L_\mu(a)\le\operatorname{OPT}_0(\mu,a),\qquad
\mathcal L_\mu(\alpha_\mu(\eta))\ge B_\mu(\eta).\mbox{\footnotemark}
\end{equation}
\footnotetext{For the second inequality, $\mathcal L_\mu(\alpha_\mu(\eta))=\sup_{\eta'\ge1}\min\{B_\mu(\eta'),\alpha_\mu(\eta)-D_\mu(\eta')\}\ge\min\{B_\mu(\eta),\alpha_\mu(\eta)-D_\mu(\eta)\}=B_\mu(\eta)$. The inequality chooses $\eta'=\eta$ in the supremum. The last equality uses Proposition~\ref{prop:calibration}: $\alpha_\mu(\eta)=B_\mu(\eta)+D_\mu(\eta)$, so both entries in the minimum equal $B_\mu(\eta)$.}%
We establish the concavity of $\mathcal L_\mu$, which is needed for the policy comparison. Raising the penalty makes the cost at each fixed base-stock level increase linearly. Between two penalties, a concave lower curve lies above the straight line joining its endpoint values. Section~\ref{sec:comparison-plan} uses this geometry to compare the cost of a fixed base-stock level with $\sqrt2\,\mathcal L_\mu(a)$ throughout a penalty interval.

The rank minimum makes $B_\mu$ concave, and the schedule supremum makes $D_\mu$ convex. Hence, $\Lambda(a,\eta):=\min\{B_\mu(\eta),a-D_\mu(\eta)\}$ is jointly concave: its hypograph is the intersection of the hypographs of its two concave arguments. For penalties $\underline a,\overline a$ and weights $\underline\eta,\overline\eta\ge1$, joint concavity gives
\[
 \Lambda(q\underline a+(1-q)\overline a,\,q\underline\eta+(1-q)\overline\eta)
 \ge q\Lambda(\underline a,\underline\eta)+(1-q)\Lambda(\overline a,\overline\eta),\qquad 0\le q\le1.
\]
Taking arbitrarily close maximizers on the right proves concavity of $\mathcal L_\mu$. Each $\Lambda(\cdot,\eta)$ is nondecreasing and $1$-Lipschitz, and these properties pass to the supremum. Moreover, $D_\mu(\eta)\ge0$ and the choice $\eta=1$ gives $\Lambda(a,1)=\min\{1,a\}$, so $\min\{1,a\}\le\mathcal L_\mu(a)\le a$ for $a\ge0$. Thus, $\mathcal L_\mu$ is finite and continuous on the entire penalty domain.

We next determine the optimal cost exactly when $0\le a\le1$ or $\mu=0$. These cases reduce the remaining proof of the $\sqrt2$ guarantee to $\mu>0$ and $a>1$, and the equality at $a=1$ provides the starting point for the later interval comparisons.

For $0\le a\le1$, choosing $\eta=1$ in \eqref{eq:concave-lower-bound} and using $B_\mu(1)=1$ and $D_\mu(1)=0$\footnote{At $\eta=1$, $G_l(\mu)\ge\Prb(N(\mu)<l)$ for each integer $l\ge1$, so every term in the minimum defining $B_\mu(1)$ is at least one; $l=0$ gives one. Every summand in $D_\mu(1)$ is $p_{i-1}(\tau)-F_{i-1}(\tau)\le0$, and an empty delivery schedule gives zero.} gives $\mathcal L_\mu(a)\ge\min\{1,a\}=a$. The policy that never replenishes achieves average cost $a$. The lower bound and feasible cost therefore match, proving that no replenishment is optimal and
\[
\mathcal L_\mu(a)=\operatorname{OPT}_0(\mu,a)=U_\mu(a)=a
\qquad(0\le a\le1).
\]
When $\mu=0$, $B_0(\eta)=1$ and $D_0(\eta)=\eta-1$.\footnote{At zero lead time, $N(0)=0$. The rank-zero term in $B_0(\eta)$ is $\eta$, and every integer rank $l\ge1$ contributes $l$, so the minimum is one for $\eta\ge1$. All remaining delivery times are zero. In $D_0(\eta)$, the first pending unit contributes $\eta-1$ and each additional unit contributes $-1$; the empty sum is zero. Thus, the supremum is $\eta-1$.} Replenishment is immediate, so base stock one serves every demand and costs $1$, while no replenishment costs $a$. Choosing $\eta=1$ gives the lower bound $\min\{a,1\}$, which matches the cheaper feasible choice. Hence,
\[
\mathcal L_0(a)=U_0(a)=\operatorname{OPT}_0(0,a)=\min\{a,1\}.
\]

\subsection{The \texorpdfstring{$\sqrt2$}{Square-Root-Two} Zero-Setup Guarantee}
\label{sec:base-stock-guarantee}\label{sec:comparison-plan}

\begin{figure}[!htbp]
\SingleSpaced
\centering
\begin{tikzpicture}[x=1cm,y=1cm,font=\footnotesize,>=stealth]
\begin{scope}[xshift=0cm]
\node[anchor=south] at (3.1,4.63) {(a) One interval: base stock $j=5$};
\draw[->,black!70,line width=.55pt] (0,0)--(6.48,0) node[anchor=west] {$a$};
\draw[->,black!70,line width=.55pt] (0,0)--(0,4.4) node[anchor=south] {cost};
\draw[black!55] (0,0.800)--(-.07,0.800) node[anchor=east] {$1$};
\draw[black!55] (0,1.600)--(-.07,1.600) node[anchor=east] {$2$};
\draw[black!55] (0,2.400)--(-.07,2.400) node[anchor=east] {$3$};
\draw[black!55] (0,3.200)--(-.07,3.200) node[anchor=east] {$4$};
\draw[black!55] (0,4.000)--(-.07,4.000) node[anchor=east] {$5$};
\draw[black!55] (0.732917,0)--(0.732917,-.07) node[anchor=north] {$\underline a=a_4$};
\draw[black!55] (5.467083,0)--(5.467083,-.07) node[anchor=north] {$\overline a=a_5$};
\fill[lsBand,opacity=.6] plot coordinates {(0.732917,2.407162) (0.811820,2.418541) (0.890723,2.429909) (0.969626,2.441267) (1.048528,2.452613) (1.127431,2.463950) (1.206334,2.475276) (1.285237,2.486593) (1.364139,2.497900) (1.443042,2.509198) (1.521945,2.520486) (1.600848,2.531766) (1.679750,2.543037) (1.758653,2.554299) (1.837556,2.565553) (1.916459,2.576799) (1.995361,2.586521) (2.074264,2.595064) (2.153167,2.603597) (2.232070,2.612121) (2.310972,2.620636) (2.389875,2.629143) (2.468778,2.637641) (2.547681,2.646130) (2.626583,2.654612) (2.705486,2.663085) (2.784389,2.671551) (2.863292,2.680009) (2.942194,2.688459) (3.021097,2.696902) (3.100000,2.705338) (3.178903,2.713767) (3.257806,2.722188) (3.336708,2.730603) (3.415611,2.739011) (3.494514,2.747413) (3.573417,2.755808) (3.652319,2.764196) (3.731222,2.772579) (3.810125,2.780955) (3.889028,2.789325) (3.967930,2.797690) (4.046833,2.806049) (4.125736,2.814402) (4.204639,2.822749) (4.283541,2.831091) (4.362444,2.839427) (4.441347,2.847759) (4.520250,2.856085) (4.599152,2.864406) (4.678055,2.872722) (4.756958,2.881033) (4.835861,2.889339) (4.914763,2.897640) (4.993666,2.905937) (5.072569,2.914229) (5.151472,2.922516) (5.230374,2.930799) (5.309277,2.939078) (5.388180,2.947352) (5.467083,2.955622)}--(0.732917,2.407162)--cycle;
\draw[lsTeal,line width=.9pt,line join=round] plot coordinates {(0.000000,1.626971) (0.025833,1.629633) (0.051667,1.632295) (0.077500,1.634955) (0.103333,1.637615) (0.129167,1.640273) (0.155000,1.642930) (0.180833,1.645586) (0.206667,1.648241) (0.232500,1.650896) (0.258333,1.653549) (0.284167,1.656201) (0.310000,1.658852) (0.335833,1.661502) (0.361667,1.664151) (0.387500,1.666800) (0.413333,1.669447) (0.439167,1.672093) (0.465000,1.674739) (0.490833,1.677383) (0.516667,1.680027) (0.542500,1.682670) (0.568333,1.685311) (0.594167,1.687952) (0.620000,1.690592) (0.645833,1.693231) (0.671667,1.695869) (0.697500,1.698507) (0.723333,1.701143) (0.749167,1.703779) (0.775000,1.706413) (0.800833,1.709047) (0.826667,1.711680) (0.852500,1.714312) (0.878333,1.716944) (0.904167,1.719574) (0.930000,1.722204) (0.955833,1.724833) (0.981667,1.727461) (1.007500,1.730088) (1.033333,1.732715) (1.059167,1.735341) (1.085000,1.737966) (1.110833,1.740590) (1.136667,1.743213) (1.162500,1.745836) (1.188333,1.748458) (1.214167,1.751079) (1.240000,1.753700) (1.265833,1.756319) (1.291667,1.758938) (1.317500,1.761557) (1.343333,1.764174) (1.369167,1.766791) (1.395000,1.769407) (1.420833,1.772023) (1.446667,1.774637) (1.472500,1.777252) (1.498333,1.779865) (1.524167,1.782478) (1.550000,1.785090) (1.575833,1.787701) (1.601667,1.790312) (1.627500,1.792922) (1.653333,1.795531) (1.679167,1.798140) (1.705000,1.800748) (1.730833,1.803355) (1.756667,1.805962) (1.782500,1.808568) (1.808333,1.811174) (1.834167,1.813778) (1.860000,1.816383) (1.885833,1.818986) (1.911667,1.821589) (1.937500,1.824191) (1.963333,1.826493) (1.989167,1.828472) (2.015000,1.830451) (2.040833,1.832429) (2.066667,1.834406) (2.092500,1.836382) (2.118333,1.838358) (2.144167,1.840333) (2.170000,1.842307) (2.195833,1.844281) (2.221667,1.846254) (2.247500,1.848226) (2.273333,1.850198) (2.299167,1.852169) (2.325000,1.854139) (2.350833,1.856109) (2.376667,1.858078) (2.402500,1.860047) (2.428333,1.862014) (2.454167,1.863981) (2.480000,1.865948) (2.505833,1.867914) (2.531667,1.869879) (2.557500,1.871843) (2.583333,1.873807) (2.609167,1.875771) (2.635000,1.877733) (2.660833,1.879696) (2.686667,1.881657) (2.712500,1.883618) (2.738333,1.885578) (2.764167,1.887538) (2.790000,1.889497) (2.815833,1.891456) (2.841667,1.893414) (2.867500,1.895371) (2.893333,1.897328) (2.919167,1.899284) (2.945000,1.901240) (2.970833,1.903195) (2.996667,1.905150) (3.022500,1.907104) (3.048333,1.909057) (3.074167,1.911010) (3.100000,1.912963) (3.125833,1.914915) (3.151667,1.916866) (3.177500,1.918817) (3.203333,1.920767) (3.229167,1.922717) (3.255000,1.924666) (3.280833,1.926615) (3.306667,1.928563) (3.332500,1.930511) (3.358333,1.932458) (3.384167,1.934404) (3.410000,1.936351) (3.435833,1.938296) (3.461667,1.940241) (3.487500,1.942186) (3.513333,1.944130) (3.539167,1.946074) (3.565000,1.948017) (3.590833,1.949960) (3.616667,1.951902) (3.642500,1.953844) (3.668333,1.955785) (3.694167,1.957726) (3.720000,1.959666) (3.745833,1.961606) (3.771667,1.963546) (3.797500,1.965485) (3.823333,1.967423) (3.849167,1.969361) (3.875000,1.971299) (3.900833,1.973236) (3.926667,1.975173) (3.952500,1.977109) (3.978333,1.979045) (4.004167,1.980980) (4.030000,1.982915) (4.055833,1.984850) (4.081667,1.986784) (4.107500,1.988718) (4.133333,1.990651) (4.159167,1.992584) (4.185000,1.994516) (4.210833,1.996448) (4.236667,1.998380) (4.262500,2.000311) (4.288333,2.002242) (4.314167,2.004172) (4.340000,2.006102) (4.365833,2.008032) (4.391667,2.009961) (4.417500,2.011889) (4.443333,2.013818) (4.469167,2.015746) (4.495000,2.017673) (4.520833,2.019600) (4.546667,2.021527) (4.572500,2.023454) (4.598333,2.025380) (4.624167,2.027305) (4.650000,2.029231) (4.675833,2.031155) (4.701667,2.033080) (4.727500,2.035004) (4.753333,2.036928) (4.779167,2.038851) (4.805000,2.040774) (4.830833,2.042697) (4.856667,2.044619) (4.882500,2.046541) (4.908333,2.048463) (4.934167,2.050384) (4.960000,2.052305) (4.985833,2.054225) (5.011667,2.056146) (5.037500,2.058065) (5.063333,2.059985) (5.089167,2.061904) (5.115000,2.063823) (5.140833,2.065741) (5.166667,2.067659) (5.192500,2.069577) (5.218333,2.071495) (5.244167,2.073412) (5.270000,2.075328) (5.295833,2.077245) (5.321667,2.079161) (5.347500,2.081077) (5.373333,2.082992) (5.399167,2.084907) (5.425000,2.086822) (5.450833,2.088737) (5.476667,2.090651) (5.502500,2.092565) (5.528333,2.094478) (5.554167,2.096391) (5.580000,2.098304) (5.605833,2.100217) (5.631667,2.102129) (5.657500,2.104041) (5.683333,2.105953) (5.709167,2.107864) (5.735000,2.109775) (5.760833,2.111686) (5.786667,2.113597) (5.812500,2.115507) (5.838333,2.117417) (5.864167,2.119326) (5.890000,2.121236) (5.915833,2.123145) (5.941667,2.125054) (5.967500,2.126962) (5.993333,2.128870) (6.019167,2.130778) (6.045000,2.132686) (6.070833,2.134593) (6.096667,2.136500) (6.122500,2.138407) (6.148333,2.140313) (6.174167,2.142219) (6.200000,2.144125)};
\draw[lsTeal!45,line width=1.05pt,line join=round] plot coordinates {(0.000000,2.300884) (0.025833,2.304650) (0.051667,2.308413) (0.077500,2.312176) (0.103333,2.315937) (0.129167,2.319696) (0.155000,2.323454) (0.180833,2.327210) (0.206667,2.330965) (0.232500,2.334719) (0.258333,2.338471) (0.284167,2.342222) (0.310000,2.345971) (0.335833,2.349719) (0.361667,2.353466) (0.387500,2.357211) (0.413333,2.360955) (0.439167,2.364697) (0.465000,2.368438) (0.490833,2.372178) (0.516667,2.375917) (0.542500,2.379654) (0.568333,2.383390) (0.594167,2.387125) (0.620000,2.390858) (0.645833,2.394590) (0.671667,2.398321) (0.697500,2.402051) (0.723333,2.405779) (0.749167,2.409507) (0.775000,2.413233) (0.800833,2.416958) (0.826667,2.420681) (0.852500,2.424404) (0.878333,2.428125) (0.904167,2.431845) (0.930000,2.435564) (0.955833,2.439282) (0.981667,2.442999) (1.007500,2.446714) (1.033333,2.450429) (1.059167,2.454142) (1.085000,2.457855) (1.110833,2.461566) (1.136667,2.465276) (1.162500,2.468985) (1.188333,2.472693) (1.214167,2.476400) (1.240000,2.480106) (1.265833,2.483811) (1.291667,2.487515) (1.317500,2.491217) (1.343333,2.494919) (1.369167,2.498620) (1.395000,2.502320) (1.420833,2.506019) (1.446667,2.509716) (1.472500,2.513413) (1.498333,2.517109) (1.524167,2.520804) (1.550000,2.524498) (1.575833,2.528191) (1.601667,2.531883) (1.627500,2.535574) (1.653333,2.539264) (1.679167,2.542954) (1.705000,2.546642) (1.730833,2.550329) (1.756667,2.554016) (1.782500,2.557701) (1.808333,2.561386) (1.834167,2.565070) (1.860000,2.568753) (1.885833,2.572435) (1.911667,2.576116) (1.937500,2.579796) (1.963333,2.583051) (1.989167,2.585850) (2.015000,2.588648) (2.040833,2.591445) (2.066667,2.594241) (2.092500,2.597037) (2.118333,2.599831) (2.144167,2.602624) (2.170000,2.605416) (2.195833,2.608207) (2.221667,2.610998) (2.247500,2.613787) (2.273333,2.616575) (2.299167,2.619363) (2.325000,2.622149) (2.350833,2.624935) (2.376667,2.627719) (2.402500,2.630503) (2.428333,2.633286) (2.454167,2.636068) (2.480000,2.638849) (2.505833,2.641629) (2.531667,2.644408) (2.557500,2.647186) (2.583333,2.649964) (2.609167,2.652740) (2.635000,2.655516) (2.660833,2.658291) (2.686667,2.661065) (2.712500,2.663838) (2.738333,2.666611) (2.764167,2.669382) (2.790000,2.672153) (2.815833,2.674923) (2.841667,2.677692) (2.867500,2.680460) (2.893333,2.683227) (2.919167,2.685994) (2.945000,2.688760) (2.970833,2.691525) (2.996667,2.694289) (3.022500,2.697052) (3.048333,2.699815) (3.074167,2.702577) (3.100000,2.705338) (3.125833,2.708098) (3.151667,2.710858) (3.177500,2.713617) (3.203333,2.716375) (3.229167,2.719132) (3.255000,2.721889) (3.280833,2.724645) (3.306667,2.727400) (3.332500,2.730154) (3.358333,2.732908) (3.384167,2.735661) (3.410000,2.738413) (3.435833,2.741165) (3.461667,2.743916) (3.487500,2.746666) (3.513333,2.749415) (3.539167,2.752164) (3.565000,2.754912) (3.590833,2.757660) (3.616667,2.760407) (3.642500,2.763153) (3.668333,2.765898) (3.694167,2.768643) (3.720000,2.771387) (3.745833,2.774130) (3.771667,2.776873) (3.797500,2.779615) (3.823333,2.782357) (3.849167,2.785098) (3.875000,2.787838) (3.900833,2.790577) (3.926667,2.793316) (3.952500,2.796055) (3.978333,2.798792) (4.004167,2.801529) (4.030000,2.804266) (4.055833,2.807002) (4.081667,2.809737) (4.107500,2.812472) (4.133333,2.815206) (4.159167,2.817939) (4.185000,2.820672) (4.210833,2.823404) (4.236667,2.826136) (4.262500,2.828867) (4.288333,2.831597) (4.314167,2.834327) (4.340000,2.837057) (4.365833,2.839785) (4.391667,2.842514) (4.417500,2.845241) (4.443333,2.847968) (4.469167,2.850695) (4.495000,2.853421) (4.520833,2.856146) (4.546667,2.858871) (4.572500,2.861596) (4.598333,2.864319) (4.624167,2.867043) (4.650000,2.869765) (4.675833,2.872488) (4.701667,2.875209) (4.727500,2.877930) (4.753333,2.880651) (4.779167,2.883371) (4.805000,2.886091) (4.830833,2.888810) (4.856667,2.891528) (4.882500,2.894246) (4.908333,2.896964) (4.934167,2.899681) (4.960000,2.902397) (4.985833,2.905113) (5.011667,2.907829) (5.037500,2.910544) (5.063333,2.913258) (5.089167,2.915973) (5.115000,2.918686) (5.140833,2.921399) (5.166667,2.924112) (5.192500,2.926824) (5.218333,2.929536) (5.244167,2.932247) (5.270000,2.934958) (5.295833,2.937668) (5.321667,2.940378) (5.347500,2.943087) (5.373333,2.945796) (5.399167,2.948504) (5.425000,2.951212) (5.450833,2.953920) (5.476667,2.956627) (5.502500,2.959333) (5.528333,2.962040) (5.554167,2.964745) (5.580000,2.967451) (5.605833,2.970155) (5.631667,2.972860) (5.657500,2.975564) (5.683333,2.978267) (5.709167,2.980970) (5.735000,2.983673) (5.760833,2.986375) (5.786667,2.989077) (5.812500,2.991779) (5.838333,2.994480) (5.864167,2.997180) (5.890000,2.999880) (5.915833,3.002580) (5.941667,3.005279) (5.967500,3.007978) (5.993333,3.010677) (6.019167,3.013375) (6.045000,3.016073) (6.070833,3.018770) (6.096667,3.021467) (6.122500,3.024164) (6.148333,3.026860) (6.174167,3.029556) (6.200000,3.032251)};
\draw[black!35,densely dotted,line width=.5pt] (0.732917,0.000000)--(0.732917,2.407162);
\fill[lsTeal!45] (0.732917,2.407162) circle (1.4pt);
\draw[black!35,densely dotted,line width=.5pt] (5.467083,0.000000)--(5.467083,2.955622);
\fill[lsTeal!45] (5.467083,2.955622) circle (1.4pt);
\draw[black!65,dashed,line width=.7pt] (0.732917,2.407162)--(5.467083,2.955622);
\draw[lsOrng,dash dot,line width=1.05pt] (0.732917,2.008320)--(5.467083,2.449934);
\fill[lsOrng] (0.732917,2.008320) circle (1.4pt);
\fill[lsOrng] (5.467083,2.449934) circle (1.4pt);
\end{scope}
\begin{scope}[xshift=7.75cm]
\node[anchor=south] at (3.1,4.63) {(b) Five successive intervals};
\draw[->,black!70,line width=.55pt] (0,0)--(6.48,0) node[anchor=west] {$a$};
\draw[->,black!70,line width=.55pt] (0,0)--(0,4.4) node[anchor=south] {cost};
\draw[black!55] (0,1.000)--(-.07,1.000) node[anchor=east] {$2$};
\draw[black!55] (0,2.000)--(-.07,2.000) node[anchor=east] {$4$};
\draw[black!55] (0,3.000)--(-.07,3.000) node[anchor=east] {$6$};
\draw[black!55] (0,4.000)--(-.07,4.000) node[anchor=east] {$8$};
\draw[black!55] (0.450377,0)--(0.450377,-.07) node[anchor=north] {$a_3$};
\draw[black!55] (0.656599,0)--(0.656599,.07);
\node[anchor=south west,text=black!55,inner sep=2pt] (penaltyLabelFour) at (1.100000,.170000) {$a_4$};
\draw[black!55,line width=.3pt] (penaltyLabelFour.south west)--(0.656599,0);
\draw[black!55] (0.990331,0)--(0.990331,-.07) node[anchor=north] {$a_5$};
\draw[black!55] (1.909942,0)--(1.909942,-.07) node[anchor=north] {$\widehat a_6$};
\draw[black!55] (3.282819,0)--(3.282819,-.07) node[anchor=north] {$\widehat a_7$};
\draw[black!55] (5.966187,0)--(5.966187,-.07) node[anchor=north] {$\widehat a_8$};
\fill[lsBand,opacity=.6] plot coordinates {(0.450377,1.222934) (0.453815,1.228502) (0.457252,1.234061) (0.460689,1.239611) (0.464126,1.245152) (0.467563,1.250684) (0.471000,1.256208) (0.474437,1.261723) (0.477874,1.267231) (0.481311,1.272730) (0.484748,1.278222) (0.488185,1.283706) (0.491622,1.289182) (0.495059,1.293921) (0.498496,1.298507) (0.501933,1.303087) (0.505370,1.307661) (0.508807,1.312229) (0.512244,1.316792) (0.515681,1.321349) (0.519118,1.325901) (0.522555,1.330447) (0.525992,1.334989) (0.529429,1.339525) (0.532866,1.344056) (0.536303,1.348582) (0.539740,1.353104) (0.543177,1.357620) (0.546614,1.362132) (0.550051,1.366640) (0.553488,1.371143) (0.556925,1.375641) (0.560362,1.380135) (0.563799,1.384625) (0.567236,1.389111) (0.570673,1.393592) (0.574110,1.398070) (0.577547,1.402543) (0.580984,1.407013) (0.584421,1.411479) (0.587858,1.415941) (0.591295,1.420399) (0.594732,1.424854) (0.598169,1.429305) (0.601606,1.433753) (0.605043,1.438197) (0.608480,1.442637) (0.611917,1.447075) (0.615354,1.451509) (0.618791,1.455939) (0.622228,1.460367) (0.625665,1.464791) (0.629102,1.469212) (0.632539,1.473631) (0.635977,1.478046) (0.639414,1.482458) (0.642851,1.486867) (0.646288,1.491274) (0.649725,1.495677) (0.653162,1.500078) (0.656599,1.504476)}--(0.450377,1.222934)--cycle;
\fill[lsBand,opacity=.6] plot coordinates {(0.656599,1.504476) (0.662161,1.511588) (0.667723,1.518693) (0.673285,1.525792) (0.678847,1.532883) (0.684410,1.539969) (0.689972,1.547048) (0.695534,1.554121) (0.701096,1.561187) (0.706659,1.568249) (0.712221,1.575304) (0.717783,1.582354) (0.723345,1.589398) (0.728907,1.596437) (0.734470,1.603471) (0.740032,1.610499) (0.745594,1.616576) (0.751156,1.621915) (0.756718,1.627248) (0.762281,1.632576) (0.767843,1.637898) (0.773405,1.643214) (0.778967,1.648525) (0.784530,1.653831) (0.790092,1.659132) (0.795654,1.664428) (0.801216,1.669719) (0.806778,1.675006) (0.812341,1.680287) (0.817903,1.685564) (0.823465,1.690836) (0.829027,1.696104) (0.834589,1.701368) (0.840152,1.706627) (0.845714,1.711882) (0.851276,1.717133) (0.856838,1.722380) (0.862400,1.727623) (0.867963,1.732862) (0.873525,1.738097) (0.879087,1.743328) (0.884649,1.748556) (0.890212,1.753780) (0.895774,1.759001) (0.901336,1.764218) (0.906898,1.769432) (0.912460,1.774642) (0.918023,1.779849) (0.923585,1.785053) (0.929147,1.790254) (0.934709,1.795451) (0.940271,1.800645) (0.945834,1.805837) (0.951396,1.811025) (0.956958,1.816210) (0.962520,1.821393) (0.968083,1.826573) (0.973645,1.831750) (0.979207,1.836924) (0.984769,1.842095) (0.990331,1.847264)}--(0.656599,1.504476)--cycle;
\fill[lsBand,opacity=.6] plot coordinates {(0.990331,1.847264) (1.005658,1.861493) (1.020985,1.875704) (1.036312,1.889896) (1.051639,1.904071) (1.066966,1.918229) (1.082292,1.932371) (1.097619,1.946498) (1.112946,1.960610) (1.128273,1.974707) (1.143600,1.988791) (1.158927,2.002862) (1.174253,2.014497) (1.189580,2.024138) (1.204907,2.033771) (1.220234,2.043397) (1.235561,2.053014) (1.250888,2.062625) (1.266214,2.072228) (1.281541,2.081825) (1.296868,2.091416) (1.312195,2.101000) (1.327522,2.110577) (1.342849,2.120149) (1.358175,2.129716) (1.373502,2.139277) (1.388829,2.148832) (1.404156,2.158382) (1.419483,2.167928) (1.434810,2.177468) (1.450136,2.187004) (1.465463,2.196535) (1.480790,2.206062) (1.496117,2.215584) (1.511444,2.225102) (1.526771,2.234616) (1.542097,2.244127) (1.557424,2.253633) (1.572751,2.263136) (1.588078,2.272635) (1.603405,2.282130) (1.618732,2.291622) (1.634059,2.301111) (1.649385,2.310597) (1.664712,2.320079) (1.680039,2.329558) (1.695366,2.339034) (1.710693,2.348508) (1.726020,2.357978) (1.741346,2.367446) (1.756673,2.376911) (1.772000,2.386373) (1.787327,2.395833) (1.802654,2.405290) (1.817981,2.414745) (1.833307,2.424197) (1.848634,2.433645) (1.863961,2.443090) (1.879288,2.452532) (1.894615,2.461970) (1.909942,2.471405)}--(0.990331,1.847264)--cycle;
\fill[lsBand,opacity=.6] plot coordinates {(1.909942,2.471405) (1.932823,2.480125) (1.955704,2.488840) (1.978585,2.497551) (2.001467,2.506258) (2.024348,2.514962) (2.047229,2.523662) (2.070111,2.532359) (2.092992,2.541052) (2.115873,2.549742) (2.138754,2.558429) (2.161636,2.567113) (2.184517,2.575794) (2.207398,2.584472) (2.230280,2.593147) (2.253161,2.601820) (2.276042,2.610490) (2.298923,2.619157) (2.321805,2.627822) (2.344686,2.636485) (2.367567,2.645145) (2.390449,2.653803) (2.413330,2.662459) (2.436211,2.671113) (2.459092,2.679764) (2.481974,2.688414) (2.504855,2.697061) (2.527736,2.705707) (2.550618,2.714350) (2.573499,2.722992) (2.596380,2.731632) (2.619261,2.740271) (2.642143,2.748907) (2.665024,2.757542) (2.687905,2.766175) (2.710787,2.774807) (2.733668,2.783437) (2.756549,2.792066) (2.779430,2.800693) (2.802312,2.809319) (2.825193,2.817944) (2.848074,2.826567) (2.870955,2.835189) (2.893837,2.843809) (2.916718,2.852428) (2.939599,2.861046) (2.962481,2.869663) (2.985362,2.878278) (3.008243,2.886893) (3.031124,2.895506) (3.054006,2.904118) (3.076887,2.912729) (3.099768,2.921339) (3.122650,2.929948) (3.145531,2.938556) (3.168412,2.947163) (3.191293,2.955769) (3.214175,2.964374) (3.237056,2.972978) (3.259937,2.981581) (3.282819,2.990183)}--(1.909942,2.471405)--cycle;
\fill[lsBand,opacity=.6] plot coordinates {(3.282819,2.990183) (3.327541,2.999611) (3.372264,3.009037) (3.416987,3.018461) (3.461710,3.027884) (3.506433,3.037305) (3.551155,3.046724) (3.595878,3.056142) (3.640601,3.065558) (3.685324,3.074973) (3.730047,3.084387) (3.774769,3.093799) (3.819492,3.103210) (3.864215,3.112620) (3.908938,3.122029) (3.953661,3.131437) (3.998384,3.140843) (4.043106,3.150248) (4.087829,3.159653) (4.132552,3.169056) (4.177275,3.178458) (4.221998,3.187860) (4.266720,3.197260) (4.311443,3.206660) (4.356166,3.216059) (4.400889,3.225457) (4.445612,3.234854) (4.490334,3.244250) (4.535057,3.253646) (4.579780,3.263041) (4.624503,3.272435) (4.669226,3.281828) (4.713948,3.291221) (4.758671,3.300613) (4.803394,3.310004) (4.848117,3.319395) (4.892840,3.328785) (4.937562,3.338175) (4.982285,3.347564) (5.027008,3.356952) (5.071731,3.366340) (5.116454,3.375728) (5.161177,3.385115) (5.205899,3.394501) (5.250622,3.403887) (5.295345,3.413272) (5.340068,3.422657) (5.384791,3.432041) (5.429513,3.441425) (5.474236,3.450809) (5.518959,3.460192) (5.563682,3.469575) (5.608405,3.478957) (5.653127,3.488339) (5.697850,3.497720) (5.742573,3.507102) (5.787296,3.516482) (5.832019,3.525863) (5.876741,3.535243) (5.921464,3.544622) (5.966187,3.554002)}--(3.282819,2.990183)--cycle;
\draw[lsTeal,line width=.9pt,line join=round] plot coordinates {(0.000000,0.000000) (0.019375,0.056250) (0.038750,0.112500) (0.058125,0.168750) (0.077500,0.225000) (0.096875,0.281250) (0.116250,0.337500) (0.135625,0.393750) (0.155000,0.450000) (0.174375,0.503114) (0.193750,0.531144) (0.213125,0.559174) (0.232500,0.587204) (0.251875,0.614695) (0.271250,0.641694) (0.290625,0.668358) (0.310000,0.694669) (0.329375,0.720654) (0.348750,0.745237) (0.368125,0.768446) (0.387500,0.791453) (0.406875,0.814285) (0.426250,0.836910) (0.445625,0.859290) (0.465000,0.881451) (0.484375,0.903418) (0.503750,0.923132) (0.523125,0.941301) (0.542500,0.959354) (0.561875,0.977301) (0.581250,0.995153) (0.600625,1.012918) (0.620000,1.030606) (0.639375,1.048221) (0.658750,1.065771) (0.678125,1.083261) (0.697500,1.100696) (0.716875,1.118080) (0.736250,1.135416) (0.755625,1.149897) (0.775000,1.163005) (0.794375,1.176068) (0.813750,1.189088) (0.833125,1.202069) (0.852500,1.215013) (0.871875,1.227922) (0.891250,1.240799) (0.910625,1.253646) (0.930000,1.266464) (0.949375,1.279255) (0.968750,1.292021) (0.988125,1.304763) (1.007500,1.317483) (1.026875,1.330181) (1.046250,1.342859) (1.065625,1.355518) (1.085000,1.368158) (1.104375,1.380781) (1.123750,1.393388) (1.143125,1.405979) (1.162500,1.418556) (1.181875,1.427855) (1.201250,1.436469) (1.220625,1.445073) (1.240000,1.453669) (1.259375,1.462257) (1.278750,1.470837) (1.298125,1.479410) (1.317500,1.487976) (1.336875,1.496535) (1.356250,1.505087) (1.375625,1.513633) (1.395000,1.522173) (1.414375,1.530707) (1.433750,1.539236) (1.453125,1.547760) (1.472500,1.556278) (1.491875,1.564791) (1.511250,1.573300) (1.530625,1.581804) (1.550000,1.590303) (1.569375,1.598799) (1.588750,1.607290) (1.608125,1.615777) (1.627500,1.624260) (1.646875,1.632740) (1.666250,1.641216) (1.685625,1.649689) (1.705000,1.658158) (1.724375,1.666624) (1.743750,1.675087) (1.763125,1.683546) (1.782500,1.692003) (1.801875,1.700457) (1.821250,1.708908) (1.840625,1.717356) (1.860000,1.725800) (1.879375,1.734240) (1.898750,1.742676) (1.918125,1.749753) (1.937500,1.754973) (1.956875,1.760191) (1.976250,1.765407) (1.995625,1.770620) (2.015000,1.775833) (2.034375,1.781043) (2.053750,1.786251) (2.073125,1.791458) (2.092500,1.796663) (2.111875,1.801866) (2.131250,1.807068) (2.150625,1.812268) (2.170000,1.817467) (2.189375,1.822664) (2.208750,1.827860) (2.228125,1.833055) (2.247500,1.838248) (2.266875,1.843439) (2.286250,1.848630) (2.305625,1.853819) (2.325000,1.859007) (2.344375,1.864193) (2.363750,1.869379) (2.383125,1.874563) (2.402500,1.879746) (2.421875,1.884928) (2.441250,1.890109) (2.460625,1.895289) (2.480000,1.900468) (2.499375,1.905646) (2.518750,1.910823) (2.538125,1.915999) (2.557500,1.921174) (2.576875,1.926348) (2.596250,1.931521) (2.615625,1.936693) (2.635000,1.941865) (2.654375,1.947035) (2.673750,1.952205) (2.693125,1.957374) (2.712500,1.962542) (2.731875,1.967709) (2.751250,1.972876) (2.770625,1.978042) (2.790000,1.983207) (2.809375,1.988371) (2.828750,1.993535) (2.848125,1.998698) (2.867500,2.003860) (2.886875,2.009022) (2.906250,2.014183) (2.925625,2.019344) (2.945000,2.024503) (2.964375,2.029662) (2.983750,2.034821) (3.003125,2.039979) (3.022500,2.045136) (3.041875,2.050293) (3.061250,2.055449) (3.080625,2.060605) (3.100000,2.065760) (3.119375,2.070915) (3.138750,2.076069) (3.158125,2.081223) (3.177500,2.086376) (3.196875,2.091528) (3.216250,2.096680) (3.235625,2.101832) (3.255000,2.106983) (3.274375,2.112134) (3.293750,2.116008) (3.313125,2.118896) (3.332500,2.121784) (3.351875,2.124672) (3.371250,2.127559) (3.390625,2.130446) (3.410000,2.133333) (3.429375,2.136220) (3.448750,2.139106) (3.468125,2.141993) (3.487500,2.144879) (3.506875,2.147764) (3.526250,2.150650) (3.545625,2.153536) (3.565000,2.156421) (3.584375,2.159306) (3.603750,2.162191) (3.623125,2.165075) (3.642500,2.167960) (3.661875,2.170844) (3.681250,2.173728) (3.700625,2.176612) (3.720000,2.179496) (3.739375,2.182379) (3.758750,2.185263) (3.778125,2.188146) (3.797500,2.191029) (3.816875,2.193912) (3.836250,2.196794) (3.855625,2.199677) (3.875000,2.202559) (3.894375,2.205442) (3.913750,2.208324) (3.933125,2.211206) (3.952500,2.214087) (3.971875,2.216969) (3.991250,2.219851) (4.010625,2.222732) (4.030000,2.225613) (4.049375,2.228494) (4.068750,2.231375) (4.088125,2.234256) (4.107500,2.237137) (4.126875,2.240017) (4.146250,2.242897) (4.165625,2.245778) (4.185000,2.248658) (4.204375,2.251538) (4.223750,2.254418) (4.243125,2.257298) (4.262500,2.260177) (4.281875,2.263057) (4.301250,2.265936) (4.320625,2.268816) (4.340000,2.271695) (4.359375,2.274574) (4.378750,2.277453) (4.398125,2.280332) (4.417500,2.283210) (4.436875,2.286089) (4.456250,2.288968) (4.475625,2.291846) (4.495000,2.294724) (4.514375,2.297603) (4.533750,2.300481) (4.553125,2.303359) (4.572500,2.306237) (4.591875,2.309115) (4.611250,2.311992) (4.630625,2.314870) (4.650000,2.317748) (4.669375,2.320625) (4.688750,2.323503) (4.708125,2.326380) (4.727500,2.329257) (4.746875,2.332134) (4.766250,2.335011) (4.785625,2.337888) (4.805000,2.340765) (4.824375,2.343642) (4.843750,2.346519) (4.863125,2.349395) (4.882500,2.352272) (4.901875,2.355148) (4.921250,2.358025) (4.940625,2.360901) (4.960000,2.363777) (4.979375,2.366653) (4.998750,2.369529) (5.018125,2.372405) (5.037500,2.375281) (5.056875,2.378157) (5.076250,2.381033) (5.095625,2.383909) (5.115000,2.386784) (5.134375,2.389660) (5.153750,2.392535) (5.173125,2.395411) (5.192500,2.398286) (5.211875,2.401161) (5.231250,2.404037) (5.250625,2.406912) (5.270000,2.409787) (5.289375,2.412662) (5.308750,2.415537) (5.328125,2.418412) (5.347500,2.421287) (5.366875,2.424162) (5.386250,2.427036) (5.405625,2.429911) (5.425000,2.432786) (5.444375,2.435660) (5.463750,2.438535) (5.483125,2.441409) (5.502500,2.444284) (5.521875,2.447158) (5.541250,2.450032) (5.560625,2.452906) (5.580000,2.455781) (5.599375,2.458655) (5.618750,2.461529) (5.638125,2.464403) (5.657500,2.467277) (5.676875,2.470151) (5.696250,2.473024) (5.715625,2.475898) (5.735000,2.478772) (5.754375,2.481646) (5.773750,2.484519) (5.793125,2.487393) (5.812500,2.490267) (5.831875,2.493140) (5.851250,2.496014) (5.870625,2.498887) (5.890000,2.501760) (5.909375,2.504634) (5.928750,2.507507) (5.948125,2.510380) (5.967500,2.513158) (5.986875,2.514625) (6.006250,2.516092) (6.025625,2.517559) (6.045000,2.519026) (6.064375,2.520492) (6.083750,2.521959) (6.103125,2.523426) (6.122500,2.524893) (6.141875,2.526359) (6.161250,2.527826) (6.180625,2.529293) (6.200000,2.530759)};
\draw[lsTeal!45,line width=1.05pt,line join=round] plot coordinates {(0.000000,0.000000) (0.019375,0.079550) (0.038750,0.159099) (0.058125,0.238649) (0.077500,0.318198) (0.096875,0.397748) (0.116250,0.477297) (0.135625,0.556847) (0.155000,0.636396) (0.174375,0.711511) (0.193750,0.751152) (0.213125,0.790792) (0.232500,0.830432) (0.251875,0.869309) (0.271250,0.907493) (0.290625,0.945200) (0.310000,0.982410) (0.329375,1.019159) (0.348750,1.053924) (0.368125,1.086746) (0.387500,1.119284) (0.406875,1.151573) (0.426250,1.183570) (0.445625,1.215219) (0.465000,1.246560) (0.484375,1.277627) (0.503750,1.305506) (0.523125,1.331201) (0.542500,1.356731) (0.561875,1.382112) (0.581250,1.407359) (0.600625,1.432483) (0.620000,1.457496) (0.639375,1.482409) (0.658750,1.507228) (0.678125,1.531963) (0.697500,1.556619) (0.716875,1.581203) (0.736250,1.605721) (0.755625,1.626200) (0.775000,1.644738) (0.794375,1.663211) (0.813750,1.681625) (0.833125,1.699982) (0.852500,1.718288) (0.871875,1.736544) (0.891250,1.754755) (0.910625,1.772923) (0.930000,1.791051) (0.949375,1.809140) (0.968750,1.827194) (0.988125,1.845214) (1.007500,1.863202) (1.026875,1.881160) (1.046250,1.899089) (1.065625,1.916991) (1.085000,1.934868) (1.104375,1.952720) (1.123750,1.970549) (1.143125,1.988355) (1.162500,2.006141) (1.181875,2.019293) (1.201250,2.031474) (1.220625,2.043642) (1.240000,2.055799) (1.259375,2.067944) (1.278750,2.080078) (1.298125,2.092202) (1.317500,2.104315) (1.336875,2.116419) (1.356250,2.128514) (1.375625,2.140600) (1.395000,2.152678) (1.414375,2.164747) (1.433750,2.176809) (1.453125,2.188863) (1.472500,2.200909) (1.491875,2.212949) (1.511250,2.224982) (1.530625,2.237008) (1.550000,2.249029) (1.569375,2.261043) (1.588750,2.273051) (1.608125,2.285054) (1.627500,2.297051) (1.646875,2.309043) (1.666250,2.321030) (1.685625,2.333012) (1.705000,2.344989) (1.724375,2.356962) (1.743750,2.368930) (1.763125,2.380894) (1.782500,2.392854) (1.801875,2.404810) (1.821250,2.416761) (1.840625,2.428708) (1.860000,2.440649) (1.879375,2.452585) (1.898750,2.464516) (1.918125,2.474524) (1.937500,2.481906) (1.956875,2.489285) (1.976250,2.496662) (1.995625,2.504036) (2.015000,2.511407) (2.034375,2.518775) (2.053750,2.526141) (2.073125,2.533504) (2.092500,2.540865) (2.111875,2.548224) (2.131250,2.555580) (2.150625,2.562935) (2.170000,2.570287) (2.189375,2.577637) (2.208750,2.584985) (2.228125,2.592331) (2.247500,2.599675) (2.266875,2.607017) (2.286250,2.614357) (2.305625,2.621696) (2.325000,2.629032) (2.344375,2.636367) (2.363750,2.643701) (2.383125,2.651032) (2.402500,2.658362) (2.421875,2.665691) (2.441250,2.673018) (2.460625,2.680343) (2.480000,2.687668) (2.499375,2.694990) (2.518750,2.702311) (2.538125,2.709631) (2.557500,2.716950) (2.576875,2.724267) (2.596250,2.731583) (2.615625,2.738898) (2.635000,2.746211) (2.654375,2.753524) (2.673750,2.760835) (2.693125,2.768145) (2.712500,2.775454) (2.731875,2.782761) (2.751250,2.790068) (2.770625,2.797374) (2.790000,2.804678) (2.809375,2.811982) (2.828750,2.819284) (2.848125,2.826586) (2.867500,2.833887) (2.886875,2.841186) (2.906250,2.848485) (2.925625,2.855783) (2.945000,2.863080) (2.964375,2.870376) (2.983750,2.877671) (3.003125,2.884966) (3.022500,2.892260) (3.041875,2.899552) (3.061250,2.906844) (3.080625,2.914136) (3.100000,2.921426) (3.119375,2.928716) (3.138750,2.936005) (3.158125,2.943293) (3.177500,2.950581) (3.196875,2.957868) (3.216250,2.965154) (3.235625,2.972439) (3.255000,2.979724) (3.274375,2.987009) (3.293750,2.992487) (3.313125,2.996572) (3.332500,3.000656) (3.351875,3.004740) (3.371250,3.008823) (3.390625,3.012906) (3.410000,3.016989) (3.429375,3.021071) (3.448750,3.025153) (3.468125,3.029235) (3.487500,3.033316) (3.506875,3.037398) (3.526250,3.041479) (3.545625,3.045559) (3.565000,3.049639) (3.584375,3.053720) (3.603750,3.057799) (3.623125,3.061879) (3.642500,3.065958) (3.661875,3.070037) (3.681250,3.074116) (3.700625,3.078194) (3.720000,3.082272) (3.739375,3.086350) (3.758750,3.090428) (3.778125,3.094505) (3.797500,3.098583) (3.816875,3.102660) (3.836250,3.106736) (3.855625,3.110813) (3.875000,3.114889) (3.894375,3.118965) (3.913750,3.123041) (3.933125,3.127117) (3.952500,3.131192) (3.971875,3.135268) (3.991250,3.139343) (4.010625,3.143418) (4.030000,3.147492) (4.049375,3.151567) (4.068750,3.155641) (4.088125,3.159715) (4.107500,3.163789) (4.126875,3.167863) (4.146250,3.171936) (4.165625,3.176009) (4.185000,3.180082) (4.204375,3.184155) (4.223750,3.188228) (4.243125,3.192301) (4.262500,3.196373) (4.281875,3.200446) (4.301250,3.204518) (4.320625,3.208590) (4.340000,3.212662) (4.359375,3.216733) (4.378750,3.220805) (4.398125,3.224876) (4.417500,3.228947) (4.436875,3.233018) (4.456250,3.237089) (4.475625,3.241160) (4.495000,3.245230) (4.514375,3.249301) (4.533750,3.253371) (4.553125,3.257441) (4.572500,3.261511) (4.591875,3.265581) (4.611250,3.269651) (4.630625,3.273721) (4.650000,3.277790) (4.669375,3.281860) (4.688750,3.285929) (4.708125,3.289998) (4.727500,3.294067) (4.746875,3.298136) (4.766250,3.302204) (4.785625,3.306273) (4.805000,3.310342) (4.824375,3.314410) (4.843750,3.318478) (4.863125,3.322546) (4.882500,3.326614) (4.901875,3.330682) (4.921250,3.334750) (4.940625,3.338818) (4.960000,3.342886) (4.979375,3.346953) (4.998750,3.351020) (5.018125,3.355088) (5.037500,3.359155) (5.056875,3.363222) (5.076250,3.367289) (5.095625,3.371356) (5.115000,3.375423) (5.134375,3.379489) (5.153750,3.383556) (5.173125,3.387622) (5.192500,3.391689) (5.211875,3.395755) (5.231250,3.399821) (5.250625,3.403887) (5.270000,3.407953) (5.289375,3.412019) (5.308750,3.416085) (5.328125,3.420151) (5.347500,3.424217) (5.366875,3.428282) (5.386250,3.432348) (5.405625,3.436413) (5.425000,3.440478) (5.444375,3.444544) (5.463750,3.448609) (5.483125,3.452674) (5.502500,3.456739) (5.521875,3.460804) (5.541250,3.464869) (5.560625,3.468933) (5.580000,3.472998) (5.599375,3.477063) (5.618750,3.481127) (5.638125,3.485192) (5.657500,3.489256) (5.676875,3.493320) (5.696250,3.497385) (5.715625,3.501449) (5.735000,3.505513) (5.754375,3.509577) (5.773750,3.513641) (5.793125,3.517705) (5.812500,3.521769) (5.831875,3.525833) (5.851250,3.529896) (5.870625,3.533960) (5.890000,3.538023) (5.909375,3.542087) (5.928750,3.546150) (5.948125,3.550214) (5.967500,3.554142) (5.986875,3.556217) (6.006250,3.558291) (6.025625,3.560366) (6.045000,3.562440) (6.064375,3.564514) (6.083750,3.566589) (6.103125,3.568663) (6.122500,3.570737) (6.141875,3.572812) (6.161250,3.574886) (6.180625,3.576960) (6.200000,3.579034)};
\draw[black!35,densely dotted,line width=.5pt] (0.450377,0.000000)--(0.450377,1.222934);
\fill[lsTeal!45] (0.450377,1.222934) circle (1.4pt);
\draw[black!35,densely dotted,line width=.5pt] (0.656599,0.000000)--(0.656599,1.504476);
\fill[lsTeal!45] (0.656599,1.504476) circle (1.4pt);
\draw[black!35,densely dotted,line width=.5pt] (0.990331,0.000000)--(0.990331,1.847264);
\fill[lsTeal!45] (0.990331,1.847264) circle (1.4pt);
\draw[black!35,densely dotted,line width=.5pt] (1.909942,0.000000)--(1.909942,2.471405);
\fill[lsTeal!45] (1.909942,2.471405) circle (1.4pt);
\draw[black!35,densely dotted,line width=.5pt] (3.282819,0.000000)--(3.282819,2.990183);
\fill[lsTeal!45] (3.282819,2.990183) circle (1.4pt);
\draw[black!35,densely dotted,line width=.5pt] (5.966187,0.000000)--(5.966187,3.554002);
\fill[lsTeal!45] (5.966187,3.554002) circle (1.4pt);
\draw[black!65,dashed,line width=.7pt] (0.450377,1.222934)--(0.656599,1.504476);
\draw[lsOrng,dash dot,line width=1.05pt] (0.450377,1.016710)--(0.656599,1.255200);
\fill[lsOrng] (0.450377,1.016710) circle (1.4pt);
\fill[lsOrng] (0.656599,1.255200) circle (1.4pt);
\draw[black!65,dashed,line width=.7pt] (0.656599,1.504476)--(0.990331,1.847264);
\draw[lsOrng,dash dot,line width=1.05pt] (0.656599,1.255200)--(0.990331,1.531209);
\fill[lsOrng] (0.656599,1.255200) circle (1.4pt);
\fill[lsOrng] (0.990331,1.531209) circle (1.4pt);
\draw[black!65,dashed,line width=.7pt] (0.990331,1.847264)--(1.909942,2.471405);
\draw[lsOrng,dash dot,line width=1.05pt] (0.990331,1.531209)--(1.909942,2.043410);
\fill[lsOrng] (0.990331,1.531209) circle (1.4pt);
\fill[lsOrng] (1.909942,2.043410) circle (1.4pt);
\node[text=lsOrng,inner sep=1pt] at (1.450136,0.900000) {$j=6$};
\draw[black!65,dashed,line width=.7pt] (1.909942,2.471405)--(3.282819,2.990183);
\draw[lsOrng,dash dot,line width=1.05pt] (1.909942,1.969571)--(3.282819,2.449931);
\fill[lsOrng] (1.909942,1.969571) circle (1.4pt);
\fill[lsOrng] (3.282819,2.449931) circle (1.4pt);
\node[text=lsOrng,inner sep=1pt] at (2.596380,0.900000) {$j=7$};
\draw[black!65,dashed,line width=.7pt] (3.282819,2.990183)--(5.966187,3.554002);
\draw[lsOrng,dash dot,line width=1.05pt] (3.282819,2.342729)--(5.966187,2.888432);
\fill[lsOrng] (3.282819,2.342729) circle (1.4pt);
\fill[lsOrng] (5.966187,2.888432) circle (1.4pt);
\node[text=lsOrng,inner sep=1pt] at (4.624503,0.900000) {$j=8$};
\node[text=lsOrng,inner sep=2pt] (stockLabelFour) at (0.600000,3.150000) {$j=4$};
\node[text=lsOrng,inner sep=2pt] (stockLabelFive) at (1.900000,3.800000) {$j=5$};
\draw[black!45,line width=.35pt] (stockLabelFour.south)--(0.553488,0.900000);
\draw[black!45,line width=.35pt] (stockLabelFive.south)--(0.823465,0.900000);
\end{scope}
\draw[lsTeal,line width=.9pt] (0.2,-.75)--(0.65,-.75);
\node[anchor=west] at (0.75,-.75) {$\mathcal L_\mu(a)$ nonlinearly concave};
\draw[lsTeal!45,line width=1.05pt] (4.95,-.75)--(5.40,-.75);
\node[anchor=west] at (5.50,-.75) {$\sqrt2\,\mathcal L_\mu(a)$ nonlinearly concave};
\draw[black!65,dashed,line width=.9pt] (10.10,-.75)--(10.55,-.75);
\node[anchor=west] at (10.65,-.75) {endpoint chord};
\draw[lsOrng,dash dot,line width=1.05pt] (13.25,-.75)--(13.70,-.75);
\node[anchor=west] at (13.80,-.75) {$L_j(a)$ linear};
\end{tikzpicture}
\caption{Base-stock comparisons across penalty intervals}
\label{fig:stock-interpolation}
\begin{minipage}{0.98\linewidth}
\SingleSpaced\footnotesize\emph{Notes.} Here $\mu=5$. Dark and light solid curves show $\mathcal L_\mu(a)$ and $\sqrt2\,\mathcal L_\mu(a)$. Each dash-dot policy-cost segment lies below its dashed endpoint chord; concavity keeps the chord below the scaled lower curve. Panel (a) fixes $j=5$ on $[\underline a,\overline a]=[a_4,a_5]$. Panel (b) uses levels $4,5,6,7,8$ on consecutive intervals with endpoints $a_3,a_4,a_5,\widehat a_6,\widehat a_7,\widehat a_8$. The penalties $a_j$, $j=3,4,5$, are neighboring policy-cost intersections; $\widehat a_j=\alpha_\mu(1/e_j)$, $j=6,7,8$, equate the two lower-bound terms.
\end{minipage}
\end{figure}

\subsubsection{Two endpoint comparisons cover an interval.}
\label{sec:interpolation}\label{sec:interpolation-endpoints}
For a fixed base stock $j$, the cost $L_j(a)=H_j+ae_j$ is affine in the penalty $a$. Increasing the base-stock level lowers the loss fraction, so neighboring cost lines intersect. We will compare the lines at these intersections and at selected calibrated penalties. Write $v_j:=\operatorname{Var}(I_j)$, the variance of on-hand inventory. Appendix~\ref{app:erlang-intersections} derives the intersection penalty and common cost below. The Erlang recursion gives
\[
 \delta_j:=e_j-e_{j+1}=\frac{e_j(H_j+1)}{\mu+H_j+1}>0.
\]
The two affine costs therefore intersect at
\begin{equation}\label{eq:intersection-penalty}
 a_j:=\delta_j^{-1}-\mu=A_j(1+v_j/(H_j+1)).
\end{equation}
Figure~\ref{fig:stock-interpolation}(a) uses the intersections $a_4$ and $a_5$ as endpoints of the interval for base stock five. At $a_j$, the neighboring base-stock levels have the common cost
\[
 L_j(a_j)=L_{j+1}(a_j)=H_j+1+\frac{v_j}{H_j+1}
     =j+1-\mu+\frac\mu{H_j+1}.
\]

Figure~\ref{fig:stock-interpolation}(a) illustrates the comparison for one fixed base-stock level over a single penalty interval. Panel (b) shows how different base-stock levels cover successive intervals, linked through penalties at which both neighboring base-stock levels meet the bound. The dark solid curve shows the unscaled lower bound $\mathcal L_\mu(a)$.

\FloatBarrier

The decisive step uses any feasible base-stock level whose endpoint costs satisfy the desired comparison. Suppose base stock $j$ satisfies $L_j(\underline a)\le\sqrt2\,\mathcal L_\mu(\underline a)$ and $L_j(\overline a)\le\sqrt2\,\mathcal L_\mu(\overline a)$ at two penalties $\underline a$ and $\overline a$. For $a:=(1-q)\underline a+q\overline a$, $0\le q\le1$, the affinity of $L_j$ makes $L_j(a)$ the corresponding weighted average of $L_j(\underline a)$ and $L_j(\overline a)$. The endpoint bounds and the concavity of $\mathcal L_\mu$ then give
\begin{equation}\label{eq:fixed-stock-interpolation}
L_j(a)\le\sqrt2\bigl[(1-q)\mathcal L_\mu(\underline a)+q\mathcal L_\mu(\overline a)\bigr]\le\sqrt2\mathcal L_\mu(a).
\end{equation}
Since $U_\mu\le L_j$, this establishes the guarantee at every penalty between the two endpoints, in either order. We use this same interpolation throughout.

\subsubsection{Establishing the endpoint bounds.}\label{sec:endpoint-bounds}
Three comparisons provide the endpoints. Small base-stock levels need an initial connection to the exact no-replenishment case. For larger base-stock levels, we use intersection penalties $a_j$ when $j\le\mu$ and $\mu\ge3$, and calibrated penalties $\widehat a_j$ when $j\ge\mu$. Both endpoint comparisons apply at $j=\mu\ge3$. The following lemma records precisely what each comparison establishes.

\begin{lemma}[{\sc Endpoint Comparisons}]\label{lem:endpoints}
Fix $\mu>0$, and let $a_j$ be the intersection of adjacent base-stock cost lines in \eqref{eq:intersection-penalty}.
\begin{enumerate}[(i)]
\item Define
\[
\eta^\circ:=A_1=1+\mu^{-1},\qquad
a^\circ:=1+\frac{2-e^{-\mu}}{\mu},\qquad
g^\circ:=1+\frac{1-e^{-\mu}}{\mu}.
\]
Then $a^\circ=\alpha_\mu(\eta^\circ)$, $\mathcal L_\mu(a^\circ)\ge g^\circ$, and
\begin{equation}\label{eq:low-price-anchor}
\max\{L_1(a^\circ),L_2(a^\circ)\}<\frac54g^\circ\quad(\mu>0),
\qquad L_3(a^\circ)<\frac75g^\circ\quad(\mu\ge2).
\end{equation}
\item If $j\ge1$ is an integer and $\mu\ge\max\{j,3\}$, then
\begin{equation}\label{eq:overloaded-endpoint-main}
L_j(a_j)=L_{j+1}(a_j)\le\sqrt2\mathcal L_\mu(a_j).
\end{equation}
\item For every integer $j\ge\mu$, write $\widehat a_j:=\alpha_\mu(A_j)$, where $A_j=1/e_j$. Then
\begin{equation}\label{eq:calibrated-endpoint}
\max\{L_j(\widehat a_j),L_{j+1}(\widehat a_j)\}<\sqrt2 B_\mu(A_j)
\le\sqrt2\mathcal L_\mu(\widehat a_j).
\end{equation}
The sequence $\widehat a_j$ increases to infinity.
\end{enumerate}
\end{lemma}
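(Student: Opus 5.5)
The three parts are handled separately. Each one leans on Proposition~\ref{prop:calibration} to turn a calibrated penalty into the lower bound $\mathcal L_\mu(\alpha_\mu(\eta))\ge B_\mu(\eta)$ from \eqref{eq:lower-bound-comparison}.

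\emph{Part (i).} I would compute $\alpha_\mu(\eta^\circ)$ directly from \eqref{eq:calibration-map}. With $\eta=1+\mu^{-1}$ the slope is $1-1/\eta=1/(\mu+1)$, so the line rises by at most $\mu/(\mu+1)<1$ over $[0,\mu]$. The supremum is then positive only on paths with no demand before the end, or more precisely it equals $\max\{0,\sup_{t<\tau_1}(t/(\mu+1))\}$, where $\tau_1$ is the first arrival. This gives $\E\sup=\E[\min(\tau_1,\mu)]/(\mu+1)=(1-e^{-\mu})/(\mu+1)$. Hence
\[
\alpha_\mu(\eta^\circ)=\frac{\mu+1}{\mu}\Bigl[1+\frac{1-e^{-\mu}}{\mu+1}\Bigr]=a^\circ .
\]
Next I would evaluate $B_\mu(\eta^\circ)$ from \eqref{eq:rank-minimum}. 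Rank $0$ gives $\eta^\circ$, and rank $1$ gives $e^{-\mu}+\eta^\circ(1-e^{-\mu})=g^\circ$. For ranks $l\ge2$, I would show the term is at least $g^\circ$ using $G_l\ge G_1+\Prb(N(\mu)\le l-1)-e^{-\mu}$-type increments and monotonicity of the terms in $l$. That yields $\mathcal L_\mu(a^\circ)\ge g^\circ$.

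The inequalities \eqref{eq:low-price-anchor} then become explicit one-variable inequalities in $\mu$, since $L_1,L_2,L_3$ have closed forms in $e^{-\mu}$ and polynomials. I would clear denominators and verify positivity on $\mu>0$ (respectively $\mu\ge2$) by series expansion for small $\mu$ and elementary monotonicity for large $\mu$.

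\emph{Part (iii).} Set $\eta=A_j$. Then \eqref{eq:lower-bound-comparison} gives $B_\mu(A_j)\le\mathcal L_\mu(\widehat a_j)$, so only the left inequality needs work. I would write $L_j(\widehat a_j)=H_j+e_j\widehat a_j=H_j+e_j(B+D)$ with $\eta=1/e_j$. Then I would bound $D_\mu(A_j)$ explicitly: the maximizing schedule puts rank $i$ at the time where $\eta p_{i-1}(\tau)-F_{i-1}(\tau)$ is maximized over $[0,\mu]$, and only ranks $i\lesssim j$ contribute positively. I would also bound $B_\mu(A_j)$ below by identifying the minimizing rank $l$ near $j$. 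The case $L_{j+1}$ follows from $L_{j+1}(a)\le L_j(a)+1-\delta_j(\mu+a)$, or by a direct analogue. Monotonicity of $\widehat a_j$ follows because $A_j$ increases in $j$, $\alpha_\mu$ is strictly increasing, and $A_j\to\infty$.

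\emph{Part (ii).} The common cost is $j+1-\mu+\mu/(H_j+1)$. I would choose a weight $\eta$ with $\alpha_\mu(\eta)\le a_j$, so that $\mathcal L_\mu(a_j)\ge B_\mu(\eta)$ by monotonicity of $\mathcal L_\mu$. A natural choice is $\eta\approx A_j$ or a slightly smaller value. The case $j\le\mu$ corresponds to an overloaded loss system where $H_j$ is small, so I would need lower bounds on $B_\mu(\eta)$ of order $j+1-\mu+\mu/(H_j+1)$ divided by $\sqrt2$.

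\textbf{Main obstacle.} I expect the hardest step to be making the comparisons in (ii) and (iii) uniform in $\mu$ and $j$. This is where I would invoke one-sided Gaussian/Poisson tail comparisons, namely bounds on $e_j$, $H_j$, and $v_j$, together with the maximizers of $p_{i-1}$ in $D_\mu$. I would first prove these comparisons asymptotically with explicit error terms. The finitely many small cases ($\mu$ near $3$, small $j$) I would then close by direct evaluation.
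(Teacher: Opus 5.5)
Your part (i) is essentially the paper's argument. The first-arrival computation of $\alpha_\mu(\eta^\circ)$ is right. The rank comparison closes once you note that the increments $F_l(\mu)-\eta^\circ p_l(\mu)$ of the rank terms vanish at $l=1$ and are positive for $l\ge2$, because $e_l<e_1=1/\eta^\circ$. The three cost inequalities are elementary.

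Parts (ii) and (iii), however, are not proved: you restate what must be shown, and the concrete suggestions you make do not work.

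In (ii), your reduction (find $\eta$ with $\alpha_\mu(\eta)\le a_j$ and $B_\mu(\eta)\ge L_j(a_j)/\sqrt2$) is legitimate. But the candidate $\eta\approx A_j$ is infeasible: already for $j=1$, $\alpha_\mu(A_1)-a_1=(1-e^{-\mu})/\mu-1/(\mu+2)>0$ for every $\mu>0$. How far below $A_j$ to go while keeping $B_\mu(\eta)$ large is the entire content. The paper takes $\widehat\eta_j=L_j(a_j)/\sqrt{2\sigma_H}$ and needs three ingredients:
\begin{enumerate}[(a)]
\item the Poisson-to-Gaussian bound $B_\mu(\eta)>\sqrt\mu\,G(\eta/\sqrt\mu)$ with $G(s)>13s/(13+2s^2)$;
\item the uniform estimate $D_\mu(\eta)\le\eta^2/2-\eta/3$;
\item a lower bound on $a_j$ derived from the moment inequality $2\vartheta/7>\kappa^2$.
\end{enumerate}

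In (iii), $L_j(\widehat a_j)=H_j+1+W_\mu(1/A_j)$, so you need a lower bound on $B_\mu(A_j)$ and an upper bound on the expected supremum (equivalently on $D_\mu(A_j)$). Summing the schedule maxima directly gives no uniform handle. The paper instead uses two tools:
\begin{enumerate}[(a)]
\item the exact identity $B_\mu(A_j)(B_\mu(A_j)-\Delta)-\mu=v_jv_+/(H_j+1+H_+)^2$, with $\Delta=j+1-\mu$, which gives $B_\mu(A_j)\ge\beta$, the positive root of $x(x-\Delta)=\mu$;
\item the Brownian comparison, together with $A_j\ge\sqrt{\pi\mu/2}$.
\end{enumerate}
Your $L_{j+1}$ step also fails. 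The relation $L_{j+1}(a)-L_j(a)=1-(\mu+a)\delta_j$ is an identity whose sign depends on whether $\widehat a_j\ge a_j$, which you do not know. The paper writes $L_{j+1}(\widehat a_j)$ as a convex combination, with weight $e_{j+1}/e_j$, of $L_j(\widehat a_j)$ and $L_j(a_j)$. This requires a separate bound $L_j(a_j)<\sqrt2\beta$, obtained from $v_j\le H_j(H_j+1)$.

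Your fallback (asymptotics with explicit error terms, then ``finitely many small cases'' by direct evaluation) cannot close these gaps. The leftover set is not finite: $\mu$ is continuous, and in (iii) $j$ is unbounded for each $\mu$.

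More fundamentally, (ii) has no uniform slack. The factor $\sqrt2$ is attained asymptotically along $\mu=a^2$. For large $\mu$, the penalties $a=\sqrt\mu$ fall in an interval between intersection penalties $a_{j-1},a_j$ with $4\le j\le\mu$, since $a_3\approx1$ while $a_{\lfloor\mu\rfloor}\ge A_{\lfloor\mu\rfloor}>\sqrt\mu$. Interpolation then forces $\sup L_j(a_j)/\mathcal L_\mu(a_j)=\sqrt2$ over the range of (ii). Asymptotic expansions therefore leave no room. You need exact one-sided inequalities valid at every finite $\mu$, which is what the paper's Gaussian comparison lemmas supply.

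The margins in (iii) are thin as well: the paper's constant $482/375$ lies only slightly below the required $2\sqrt{\sqrt2-1}\approx1.287$.
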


\phantomsection{\bf A common starting penalty for small base-stock levels.}\label{sec:low-price-anchor} Lemma~\ref{lem:endpoints} part~(i) gives one penalty at which base stocks one and two, and base stock three when needed, all have cost strictly below $\sqrt2\,\mathcal L_\mu(a^\circ)$, because $5/4$ and $7/5$ are below $\sqrt2$. The calibrated weight $A_1$ makes ranks one and two tie in $B_\mu$. At this weight, the auxiliary cumulative supply $(1-1/\eta^\circ)t$ remains below one unit throughout $0\le t\le\mu$, so it can exceed demand only before the first arrival. This gives the explicit formulas for $a^\circ$ and $g^\circ$. The bound for base stock one at $a^\circ$, together with the exact equality at $a=1$, establishes the guarantee on $[1,a^\circ]$. The bounds for the other base-stock levels at $a^\circ$ provide endpoints for the remaining interval comparisons. Appendix~\ref{app:small-stock} gives the details.

{\bf Base-stock levels at or below mean lead-time demand.} When $j\le\mu$, we compare base stocks $j$ and $j+1$ at their common-cost penalty $a_j$. A single lower bound then covers both. Recall that $\mathcal L_\mu(a)$ is the supremum of $\min\{B_\mu(\eta),a-D_\mu(\eta)\}$ over all $\eta\ge1$, as defined in \eqref{eq:concave-lower-bound}. Thus, it suffices to find one weight $\widehat{\eta}_j\ge1$ for which the bounds $B_\mu(\widehat{\eta}_j)$ and $a_j-D_\mu(\widehat{\eta}_j)$ are both at least $L_j(a_j)/\sqrt2$. For that choice, $\mathcal L_\mu(a_j)\ge\min\{B_\mu(\widehat{\eta}_j),a_j-D_\mu(\widehat{\eta}_j)\}\ge L_j(a_j)/\sqrt2$, which proves the comparison for both base-stock levels.

Appendix~\ref{app:below-mean-comparisons} makes this choice in three steps. An inequality between mean on-hand inventory and its variance first bounds the common cost. This bound shows that the penalty at the intersection is large enough to pay for the largest stockout aging contribution. Finally, an explicit weight $\widehat{\eta}_j$ makes the bound $B_\mu(\widehat{\eta}_j)$ large enough. The Poisson-to-Gaussian inequality in Appendix~\ref{app:poisson-gaussian} substantiates this last comparison for every finite $\mu$.

\phantomsection{\bf Base-stock levels at or above mean lead-time demand.}\label{sec:above-mean-endpoints} When $j\ge\mu$, the weight $A_j=1/e_j$ makes adjacent ranks $j$ and $j+1$ tie in the lower bound. In this range, we use the calibrated penalty $\widehat a_j=\alpha_\mu(A_j)$, at which the two lower-bound terms are equal: $B_\mu(A_j)=\widehat a_j-D_\mu(A_j)$. The intersection penalties $a_j$, used in Lemma~\ref{lem:endpoints} part~(ii) for $j\le\mu$, equate the two policy costs, $L_j(a_j)=L_{j+1}(a_j)$. These penalties satisfy different equalities and need not coincide. Figure~\ref{fig:stock-interpolation}(b) shows how successive calibrated penalties $\widehat a_6,\widehat a_7,\widehat a_8$ provide endpoints for the policy comparisons. To establish Lemma~\ref{lem:endpoints} part~(iii), we show that both $L_j(\widehat a_j)$ and $L_{j+1}(\widehat a_j)$ are below $\sqrt2\,B_\mu(A_j)$, which is at most $\sqrt2\,\mathcal L_\mu(\widehat a_j)$.

To compare these policy costs with $\sqrt2\,B_\mu(A_j)$, we first bound $B_\mu(A_j)$ from below using an identity relating the two tails of the Poisson distribution. Recall that $v_j=\operatorname{Var}(N(\mu)\mid N(\mu)\le j)$, and define $v_+:=\operatorname{Var}(N(\mu)\mid N(\mu)>j)$. Appendix~\ref{app:upper-stock} proves
\begin{equation}\label{eq:underloaded-root-main}
B_\mu(A_j)\bigl(B_\mu(A_j)-j-1+\mu\bigr)-\mu=\frac{v_jv_+}{\bigl(H_j+\E[N(\mu)-j\mid N(\mu)>j]\bigr)^2}\ge0.
\end{equation}
The product of conditional variances is nonnegative, so $B_\mu(A_j)$ is at least the positive root of $x(x-j-1+\mu)=\mu$. The same appendix shows that both neighboring base-stock costs are below $\sqrt2$ times that root. Thus, at each calibrated penalty $\widehat a_j$, the costs of both neighboring base-stock levels $j$ and $j+1$ satisfy the required comparison.

Write $\widehat a_j=\alpha_\mu(A_j)$. The penalties $\widehat a_j$ increase to infinity because $A_j$ does and $\alpha_\mu$ is increasing with $\alpha_\mu(\eta)\ge \eta$. The cost $L_{j+1}(a)$ is below $\sqrt2\,\mathcal L_\mu(a)$ at both $a=\widehat a_j$ and $a=\widehat a_{j+1}$. By interpolation at this fixed base-stock level, the same comparison holds at every penalty between these endpoints. Applying this argument on each successive interval gives
\begin{equation}\label{eq:underloaded-tail}
 U_\mu(a)<\sqrt2\mathcal L_\mu(a)
 \qquad\bigl(a\ge \widehat a_S,\quad S\ge\mu,\ S\in\mathbb{N}\bigr).
\end{equation}

\subsubsection{Connecting the intervals.}
\label{sec:interval-coverage}
The low-price anchor bounds base stock one at $a^\circ$, while $L_1(1)=\mathcal L_\mu(1)=1$. Interpolation at a fixed base-stock level covers $[1,a^\circ]$.

If $0<\mu<3$, write $S:=\lceil\mu\rceil$. The anchor bounds base stock $S$ at $a^\circ$, and \eqref{eq:calibrated-endpoint} bounds its cost at $\widehat a_S$. Interpolating the same base-stock level between these penalties and using \eqref{eq:underloaded-tail} covers every $a\ge1$.

If $\mu\ge3$, write $S_0:=\lfloor\mu\rfloor$. For each $j=3,\ldots,S_0$, we have $j\le\mu$, so Lemma~\ref{lem:endpoints} part~(ii) bounds the costs $L_j(a_j)$ and $L_{j+1}(a_j)$. Interpolate base stock three between $a^\circ$ and $a_3$, base stock $j$ between $a_{j-1}$ and $a_j$ for $4\le j\le S_0$, and base stock $S_0+1$ between $a_{S_0}$ and $\widehat a_{S_0+1}$. These successive intervals form a connected set containing $a^\circ$ and $\widehat a_{S_0+1}$, regardless of their endpoint order.

Taking $S=S_0+1>\mu$ in \eqref{eq:underloaded-tail} covers every penalty $a\ge \widehat a_{S_0+1}$. In this range, we use base stock $S_0+2$ on $[\widehat a_{S_0+1},\widehat a_{S_0+2}]$, base stock $S_0+3$ on $[\widehat a_{S_0+2},\widehat a_{S_0+3}]$, and so on. Interpolation establishes the cost comparison throughout each interval, and $\widehat a_j\to\infty$ ensures that these intervals cover the entire remaining penalty range. Together with the preceding intervals and the comparison on $[1,a^\circ]$, this establishes the guarantee for every $a\ge1$.

The exact cases $a\le1$ and $\mu=0$ were established above. Each interval uses one feasible base-stock level, and overlapping intervals cover the entire penalty range. The algorithm chooses a cost-minimizing level $S^\star$, whose cost is no greater than that of any level $j$ used in these interval comparisons. Therefore,
\[
 L_{S^\star}(a)=U_\mu(a)\le L_j(a)
 \le\sqrt2\,\mathcal L_\mu(a)
 \le\sqrt2\,\operatorname{OPT}_0(\mu,a),\qquad \mu,a\ge0,
\]
where $j$ is a level justified by the interval comparisons or the exact boundary cases. This proves the $\sqrt2$ guarantee for the cost-minimizing integer base-stock policy returned by the algorithm at zero setup cost.

Appendix~\ref{app:base-stock-maps} compares the base-stock levels used in the proof with the cost-minimizing levels used by the algorithm.

\subsection{Tightness of the Base-Stock Factor}\label{sec:article-sharpness}
The worst-case sequence shows why replenishment timing matters. Base stock replaces each sale immediately, so closely spaced sales generate closely spaced deliveries one lead time later, although those sales do not predict demand at delivery. Along $\mu=a^2$, lead time grows faster than the lost-sales penalty. This is the long-lead-time setting in which regular replenishment can reduce supply variability, consistent with the asymptotic optimality of constant-order policies in periodic-review lost-sales models \citep{GoldbergEtAl2016,XinGoldberg2016} and the continuous-review constant-order-interval analysis of \citet{Xin2022}. The argument below establishes the needed comparison directly in our model.

To turn this explanation into a tightness result, we compare two bounds along $\mu=a^2$ as $a\to\infty$. Appendix~\ref{app:sharpness-proof} constructs a policy ordering one unit every $1/(1-\ell)$ time units. It serves a long-run fraction $1-\ell$ of demand and has mean inventory at most $1/(2\ell)$. Choosing the loss fraction $\ell_a:=1/\sqrt{2a}$ gives
\[
\operatorname{OPT}_0(\mu,a)\le\sqrt{2a}\qquad(a>1/2).
\]
The same appendix uses the Erlang moment identities to show that every integer base-stock level has cost at least $2\sqrt{a\omega}-\omega$ when $0<a<\mu$, where $\omega:=1-a/\mu$. Consequently,
\[
\frac{U_{a^2}(a)}{\operatorname{OPT}_0(a^2,a)}
\ge\frac{2\sqrt{a-1}-1+a^{-1}}{\sqrt{2a}}
\longrightarrow\sqrt2\qquad\text{as }a\to\infty.
\]
Together with the zero-setup upper bound, this proves \eqref{eq:article-sharp}. Along this sequence, base stock incurs cost on the scale $2\sqrt a$, while a policy with evenly spaced deliveries achieves the scale $\sqrt{2a}$. The comparison concerns actual feasible policies and the unrestricted optimum. It establishes tightness for the optimal base-stock policy returned by the algorithm at zero setup cost; the $(r,Q)$ policy selected for positive setup costs retains the upper guarantee \eqref{eq:article-full} in Theorem~\ref{thm:article-main}.

Figure~\ref{fig:base-stock-tightness} displays the proved lower bound on the approximation ratio of the algorithm's optimal base-stock policy at zero setup cost. Along $\mu=a^2$, this bound and the universal upper bound $\sqrt2$ converge to the same value. The shrinking interval therefore shows why the factor cannot be improved for ordinary base stock.

\begin{figure}[htbp]
\SingleSpaced
\centering
\includegraphics[width=.99\linewidth]{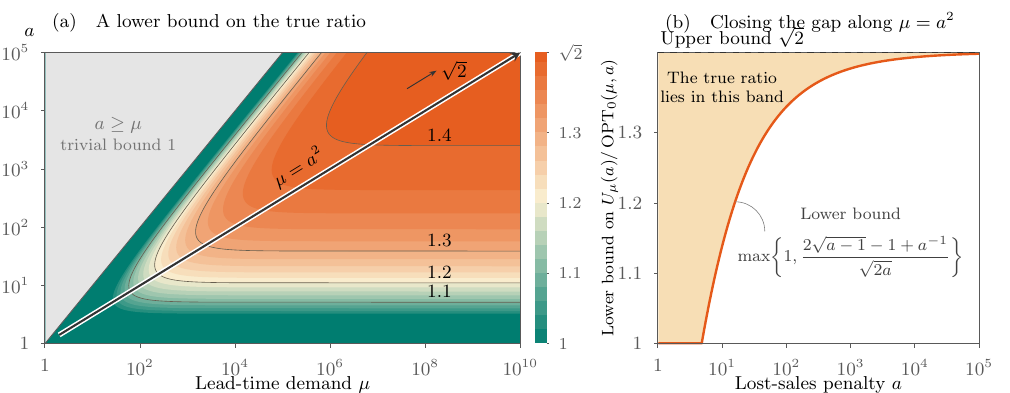}
\caption{The base-stock approximation ratio approaches its tight bound}
\label{fig:base-stock-tightness}
\begin{minipage}{0.98\linewidth}
\SingleSpaced\footnotesize\emph{Notes.} For $k=0$, panel (a) shows the analytical lower bound $\max\{1,[2\sqrt{a(1-a/\mu)}-(1-a/\mu)]/\sqrt{2a}\}$ on $U_\mu(a)/\operatorname{OPT}_0(\mu,a)$ for $1<a<\mu$; gray denotes the trivial bound $1$. Panel (b) follows $\mu=a^2$: the true ratio lies between the displayed lower bound and $\sqrt2$, which converge as $a\to\infty$.
\end{minipage}
\end{figure}

For the $(r,Q)$ policy generated by our algorithm, the worst-case factor with positive setup costs lies in $[\sqrt2,(\sqrt2+\sqrt6)/2]$. To show that the algorithm's worst-case factor is at least $\sqrt2$, fix $k=1$, so the selected lot is one, and use the sequence $\mu=a^2$ as $a\to\infty$. Appendix~\ref{app:sharpness-proof} gives the detailed proof of this lower bound. This does not establish a $\sqrt2$ lower bound for the entire $(r,Q)$ family, which can choose other lot sizes.

\subsection{Including Fixed Ordering Costs}\label{sec:batching}
When the setup cost is zero, we select a base-stock level to balance holding and lost-sales costs. A setup charge creates an additional incentive to combine orders. We now show why choosing a lot at the EOQ scale preserves a uniform guarantee.

For the full-cost comparison, suppose more generally that
\begin{equation}\label{eq:zero-setup-premise}
U_\mu(a)\le c\operatorname{OPT}_0(\mu,a).
\end{equation}
The result already proved gives $c=\sqrt2$. Writing the conversion for a general $c$ reveals how the zero-setup comparison and batching tradeoff determine the final constant.

\subsubsection{Batching the selected base-stock policy.}
Start from an integer base-stock level $S\ge1$, keep the reorder point $r=S-1$, and increase the lot size to $Q$.

Under the batched rule, inventory position cycles through $S,S+1,\ldots,S+Q-1$ at sales; deliveries transfer units from the pipeline to on-hand inventory without changing position. A coupling argument in Appendix~\ref{app:batch-upper} shows that cumulative sales under this rule are at least those of a stationary level-$S$ base-stock system over every time horizon. Batching therefore adds a controlled inventory and setup cost while preserving the baseline service performance. Recall
\[
\chi:=\sqrt{2k},\qquad Q_k:=\max\{1,\lfloor\chi\rfloor\},\qquad
b_k:=\frac{Q_k-1}{2}+\frac{k}{Q_k}.
\]
The cost calculation in that appendix proves
\begin{equation}\label{eq:batch-upper-main}
C_{\mu,a,k}(\pi_{\rm rQ}(S-1,Q_k))\le L_S(a)+b_k,
\qquad b_k\le\chi.
\end{equation}
The term $(Q_k-1)/2$ is the extra inventory allowance from traversing $Q_k$ inventory positions; $k/Q_k$ allocates the setup charge across the units in a lot. This is the same holding/setup tradeoff as in the EOQ formula. Balancing $Q/2$ and $k/Q$ gives $\sqrt{2k}=\sqrt{2K\lambda/h}$; subtracting the constant $1/2$ in our inventory allowance does not change this lot-size calculation. The chosen $Q_k$ rounds this scale down, with a minimum lot size of one. The proof establishes \eqref{eq:batch-upper-main} as a bound on total cost, without assuming that the positions are uniform in calendar time.

Selecting $S^\star$ and comparing with no replenishment therefore gives
\begin{equation}\label{eq:batch-menu-main}
C_{\rm rQ}\le\min\{a,U_\mu(a)+\chi\}
\le\min\{a,c\operatorname{OPT}_0(\mu,a)+\chi\}.
\end{equation}
The implemented rule uses the tighter integer surcharge $b_k$. When $S^\star=0$, no replenishment already achieves this bound.

\subsubsection{The batching cost faced by every policy.}
$\operatorname{OPT}_k(\mu,a)$ also includes setup costs. Every replenishment policy faces the same basic tradeoff: placing fewer, larger orders saves setup charges but leaves more units waiting for demand. We use a fraction $\theta\in[0,1]$ of holding cost, together with all setup costs, to quantify this tradeoff. The remaining fraction $1-\theta$ will be used in the zero-setup comparison established earlier.

Write $b:=\sqrt{2k\theta}$. Appendix~\ref{app:batch-lower} proves that, for every empty-start admissible policy and every horizon $T$,
\begin{equation}\label{eq:batch-credit-main}
\theta\,\E\int_0^T X(t)\,dt+k\E O(T)
\ge\sqrt{2k\theta}\,\E\,\mathcal A(T).
\end{equation}
The left side is the fraction $\theta$ of expected holding cost plus expected setup cost. The right side is $b$ times the expected number of units served. This bound allows arbitrary order quantities and timing, including decisions based on past demand; it applies beyond the classical policy family.

We now add the remaining holding cost and all lost-sales costs. Every demand is either served or lost, so $b\,\E\,\mathcal A(T)=bT-b\,\E\mathcal U(T)$. Consequently, the lower bound on total expected cost is $bT$ plus the remaining holding cost and lost-sales cost with penalty $a-b$. The remaining holding-cost coefficient is $1-\theta$. For $\theta<1$ and $a\ge b$, these remaining costs form a zero-setup problem. Dividing them by $1-\theta$ restores holding-cost coefficient one and changes the penalty to $(a-b)/(1-\theta)$. Comparing these costs with the zero-setup optimal cost gives
\begin{equation}\label{eq:operating-transform-main}
\operatorname{OPT}_k(\mu,a)
\ge b+(1-\theta)\operatorname{OPT}_0\left(\mu,\frac{a-b}{1-\theta}\right).
\end{equation}

At $\theta=1$, all holding and setup costs are used in \eqref{eq:batch-credit-main}, with $b=\sqrt{2k}$. These costs are at least $b$ times expected served demand, and lost-sales cost is $a$ times expected lost demand. Since every demand is served or lost, $\operatorname{OPT}_k(\mu,a)\ge\min\{a,\sqrt{2k}\}$. Appendix~\ref{app:batch-lower} gives the finite-horizon details, and the initial-supply argument extends the long-run bounds to the full admissible class.

\subsubsection{Matching the two cost bounds.}
\begin{proposition}[{\sc From Zero to General Setup Cost}]\label{prop:full-conversion}
If \eqref{eq:zero-setup-premise} holds with $c>0$, then
\begin{equation}\label{eq:batching-conversion}
 C_{\rm rQ}\le R\operatorname{OPT}_k(\mu,a),\qquad
 R:=\frac{c+\sqrt{c^2+4}}2.
\end{equation}
\end{proposition}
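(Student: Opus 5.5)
We combine the upper bound \eqref{eq:batch-menu-main}, $C_{\rm rQ}\le\min\{a,\,c\,\OPT_0(\mu,a)+\chi\}$ with $\chi=\sqrt{2k}$, with the lower bound \eqref{eq:operating-transform-main}, chosen at a suitable split $\theta$. Write $O:=\OPT_k(\mu,a)$. Since $\OPT_0(\mu,\cdot)$ is the infimum of functions affine and nondecreasing in the penalty, it is concave and nondecreasing. Scaling a penalty down by $(1-\theta)$ therefore gives $(1-\theta)\OPT_0(\mu,a')\ge \OPT_0(\mu,(1-\theta)a')$ for $\theta\in[0,1)$. Applied to \eqref{eq:operating-transform-main}, this yields
\[
O\ \ge\ b+\OPT_0(\mu,a-b),\qquad b=\sqrt{2k\theta},\quad a\ge b.
\]
Concavity with $\OPT_0(\mu,0)=0$ also gives $\OPT_0(\mu,a)\le \OPT_0(\mu,a-b)+b$, because the slope in $a$ is at most one (the cost is at most $\int X+a\mathcal U$ with $\mathcal U\le T$ per unit time). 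Hence $O\ge \OPT_0(\mu,a)$ always. More usefully, we keep the weighted form $O\ge b+(1-\theta)\OPT_0(\mu,\cdot)$ and avoid the Lipschitz loss.

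The cleaner route keeps the $(1-\theta)$ factor and uses the premise through $U$. Fix $\theta$ and set $b=\sqrt{2k\theta}=\chi\sqrt\theta$. We bound $c\,\OPT_0(\mu,a)+\chi$ in terms of $O$. Concavity in the penalty gives $\OPT_0(\mu,a)\le \frac{1}{1-\theta}\cdot(1-\theta)\OPT_0\bigl(\mu,\tfrac{a-b}{1-\theta}\bigr)$ whenever $\tfrac{a-b}{1-\theta}\ge a$, that is, $a\ge b/\theta$. In that regime
\[
c\,\OPT_0(\mu,a)+\chi\le \frac{c}{1-\theta}(O-b)+\chi .
\]
We then choose $\theta$ so that this is at most $RO$. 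The case $O\ge\min\{a,\chi\}$ from the $\theta=1$ bound handles small $a$. If $O\ge a$, the no-replenishment branch $C_{\rm rQ}\le a\le O$ suffices. Otherwise $O\ge\chi$, i.e.\ $\chi\le O$. Writing $x:=\chi/O\in[0,1]$, we need $\frac{c}{1-\theta}(1-x\sqrt\theta)+x\le R$ for some $\theta$. Note that $c(1-\theta)^{-1}(1-x\sqrt\theta)+x$ is linear in $x$, so it suffices to check the endpoints $x=0,1$ with a $\theta$ depending on $x$. Alternatively, we optimize $\theta$ and verify that the worst case over $x$ gives exactly $R$ with $R^2=cR+1$, as the roadmap indicates. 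A natural choice is $\sqrt\theta=1/R$. Then $1-\theta=(R^2-1)/R^2=cR/R^2=c/R$. The bound becomes $R(1-x/R)+x=R$ identically in $x$, which is the desired identity.

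So the plan is as follows. First, establish the three ingredients: the upper bound \eqref{eq:batch-menu-main}, the lower bound \eqref{eq:operating-transform-main} with $\theta=R^{-2}$, and $O\ge\min\{a,\chi\}$. Second, dispatch the case $O\ge a$ by no replenishment. Third, in the remaining case, compare $c\,\OPT_0(\mu,a)$ with $c\,(1-\theta)\OPT_0(\mu,\tfrac{a-b}{1-\theta})/(1-\theta)$ using monotonicity and concavity of $\OPT_0$ in the penalty. Finally, conclude via $1-\theta=c/R$.

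The main obstacle is the penalty comparison in the third step when $\tfrac{a-b}{1-\theta}<a$, i.e.\ $a<b/\theta=R\chi$. There monotonicity points the wrong way. I would handle this range by using the no-replenishment branch and the $\theta=1$ bound. When $a<R\chi$ and $O\ge\min\{a,\chi\}$, we get $a\le R\max\{O,\ldots\}$, and possibly $a\le R\,O$ directly. If that fails, I would replace $\OPT_0(\mu,a)$ by the 1-Lipschitz estimate $\OPT_0(\mu,a)\le \OPT_0(\mu,a')+(a-a')$ with $a'=\tfrac{a-b}{1-\theta}$, and recheck the algebra. Handling the degenerate cases $k=0$ (where $R\ge c$ is trivial) and $a<b$ is routine.
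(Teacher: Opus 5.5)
Your treatment of the regime $a\ge R\chi$ is exactly the paper's argument. With $\theta=R^{-2}$ you have $b=\chi/R$ and $1-\theta=c/R$. The adjusted penalty $(a-b)/(1-\theta)$ is at least $a$ precisely when $a\ge R\chi$. Monotonicity of $\operatorname{OPT}_0(\mu,\cdot)$ in the penalty (this is all that step uses; it is not concavity) then turns \eqref{eq:operating-transform-main} into $R\,\operatorname{OPT}_k(\mu,a)\ge\chi+c\,\operatorname{OPT}_0(\mu,a)\ge C_{\rm rQ}$.

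The gap is the complementary regime $a<R\chi$, which you flag as the main obstacle but leave open. Your split (``$O\ge a$, otherwise $O\ge\chi$'') does not avoid it. In the branch $\chi\le O<a<R\chi$, the $\theta=R^{-2}$ computation is not licensed, because the adjusted penalty falls below $a$. Your fallback also does not close the case. With $\theta=R^{-2}$, the estimate $\operatorname{OPT}_0(\mu,a)\le\operatorname{OPT}_0(\mu,a')+(a-a')$ yields only $c\,\operatorname{OPT}_0(\mu,a)+\chi\le R\,\operatorname{OPT}_k(\mu,a)+c(a-a')$, with a strictly positive excess.

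The missing step is one line, and it is how the paper closes the proof: split on $a$ versus $R\chi$ rather than on $O$. If $a\le R\chi$, then $\min\{a,\chi\}\ge a/R$, because $R\ge1$ and $\chi\ge a/R$. The $\theta=1$ bound then gives $\operatorname{OPT}_k(\mu,a)\ge\min\{a,\chi\}\ge a/R$. The no-replenishment branch of \eqref{eq:batch-menu-main} gives $C_{\rm rQ}\le a\le R\,\operatorname{OPT}_k(\mu,a)$. You had named exactly these two ingredients but did not combine them. This split also removes the division by $O$ implicit in your $x=\chi/O$.

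Separately, drop your first paragraph. Concavity with $\operatorname{OPT}_0(\mu,0)=0$ gives $\operatorname{OPT}_0(\mu,(1-\theta)a')\ge(1-\theta)\operatorname{OPT}_0(\mu,a')$, which is the reverse of what you wrote. Concavity of $\operatorname{OPT}_0$ also does not follow from your argument, since each policy's cost is a limsup and hence convex, not affine, in the penalty. The proof needs only monotonicity.
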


\proof{Proof.}
The lower bound uses $b=\sqrt{2k\theta}$ times expected served demand. To match the upper bound $(\sqrt{2k}+c\,\operatorname{OPT}_0(\mu,a))/R$, we want $b=\sqrt{2k}/R$ and $1-\theta=c/R$. These requirements give $R^2=cR+1$ and determine the split $\theta_c:=R^{-2}$, explaining the stated choice of $R>1$. If $a\le R\sqrt{2k}$, take $\theta=1$ and $b=\sqrt{2k}$; then $\operatorname{OPT}_k\ge\min\{a,\sqrt{2k}\}\ge a/R$. If $a>R\sqrt{2k}$, take $\theta=\theta_c$ and $b=\sqrt{2k}/R$ in \eqref{eq:operating-transform-main}. Since
\[
 \frac{a-\sqrt{2k}/R}{1-\theta_c}\ge a,\qquad
 1-\theta_c=c/R,
\]
monotonicity in the loss penalty gives $\operatorname{OPT}_k\ge(\sqrt{2k}+c\,\operatorname{OPT}_0(\mu,a))/R$. Combining the two cases with \eqref{eq:batch-menu-main} proves the claim. No division by $a$, $k$, or $\operatorname{OPT}_0(\mu,a)$ is required, so the proof includes their zero boundaries. At $c=\sqrt2$, $R=(\sqrt2+\sqrt6)/2<1.932$.
\Halmos
\endproof

For this coefficient-matching argument, the best split minimizes $\max\{\theta^{-1/2},c/(1-\theta)\}$ over $0<\theta<1$. The first term decreases and the second increases, so their unique intersection gives the minimizing split $\theta_c=R^{-2}$, with $R^2=cR+1$. A different fixed split cannot improve this conversion factor; a stronger lower bound or a different policy comparison may do so.

\subsubsection{Zero cost parameters and zero lead time.}\label{sec:zero-boundaries}
If $\lambda=0$ or $p=0$, the empty no-order policy has zero cost. If $h=0$ and $\lambda,p>0$, the physical depletion policy $(0,Q)$ serves $Q$ demands and then waits one lead time; its average cost is
\[
 C(\pi_{\rm rQ}(0,Q))
 =\frac{K\lambda+p\lambda^2L}{Q+\lambda L}\longrightarrow0,\qquad Q\to\infty.
\]
Thus, the optimum and the classical-family infimum are both zero, although these infima may not be attained by any finite lot size. When $K=0$, base stock $S$ has cost $p\lambda e_S(\lambda L)\to0$ as $S\to\infty$, so the base-stock infimum is also zero. For $L=0$ and $h,\lambda>0$, replenishment is immediate, and the zero-setup optimal cost is $\operatorname{OPT}_0(0,a)=\min\{a,1\}$. The batching bounds and the conversion argument remain valid at $\mu=0$, so the guarantee in \eqref{eq:batching-conversion} also covers this case with general setup costs.

\proof{Proof of Theorem~\ref{thm:article-main} and Corollary~\ref{cor:policy-selection}.}
Section~\ref{sec:comparison-plan} proves $U_\mu\le\sqrt2\mathcal L_\mu\le\sqrt2\operatorname{OPT}_0$. Section~\ref{sec:article-sharpness} provides a matching sequence for base stock. Proposition~\ref{prop:full-conversion}, with $c=\sqrt2$, proves the full-cost bound for the selected policy because its cost is at most $\min\{a,U_\mu(a)+b_k\}\le\min\{a,U_\mu(a)+\sqrt{2k}\}$. When $K=0$, $Q_k=1$, $b_k=0$, and $U_\mu(a)\le L_0(a)=a$. The selected policy is therefore a base-stock policy with cost at most $\min\{a,U_\mu(a)\}=U_\mu(a)$, so it attains the minimum integer base-stock cost and has the tight factor $\sqrt2$. Multiplying normalized costs by $h$ proves the primitive statements for $h,\lambda>0$. The zero-parameter cases just established complete the theorem.\Halmos
\endproof

\section{Numerical Experiments}\label{sec:numerics}

We implement Corollary~\ref{cor:policy-selection} by computing $S^\star$ through the following finite search. Start with $e_0=1$ and $L_0(a)=a$. For $S=1,2,\ldots$, calculate
\[
e_S=\frac{\mu e_{S-1}}{S+\mu e_{S-1}},\qquad
L_S(a)=S-\mu+(\mu+a)e_S.
\]
Retain the smallest level $S^\star$ attaining the lowest cost found. Since $L_S(a)\ge S-\mu$, stop once $S-\mu$ exceeds that cost: no larger level can improve it. This gives $U_\mu(a)=L_{S^\star}(a)$; the remaining policy choices follow Corollary~\ref{cor:policy-selection}.

A small refinement preserves the guarantee: whenever the replenishment candidate's exact cost is available, compare that cost with $a$ instead of its upper bound. Appendix~\ref{app:numerical-search} gives exact formulas when $Q=1$, $r<Q$, or $\mu=0$. This refinement can select replenishment at additional instances and cannot increase cost. The experiments below retain the stated upper-bound rule so that they evaluate Corollary~\ref{cor:policy-selection} directly.

\begin{figure}[!htbp]
\SingleSpaced
\centering
\includegraphics[width=.99\linewidth]{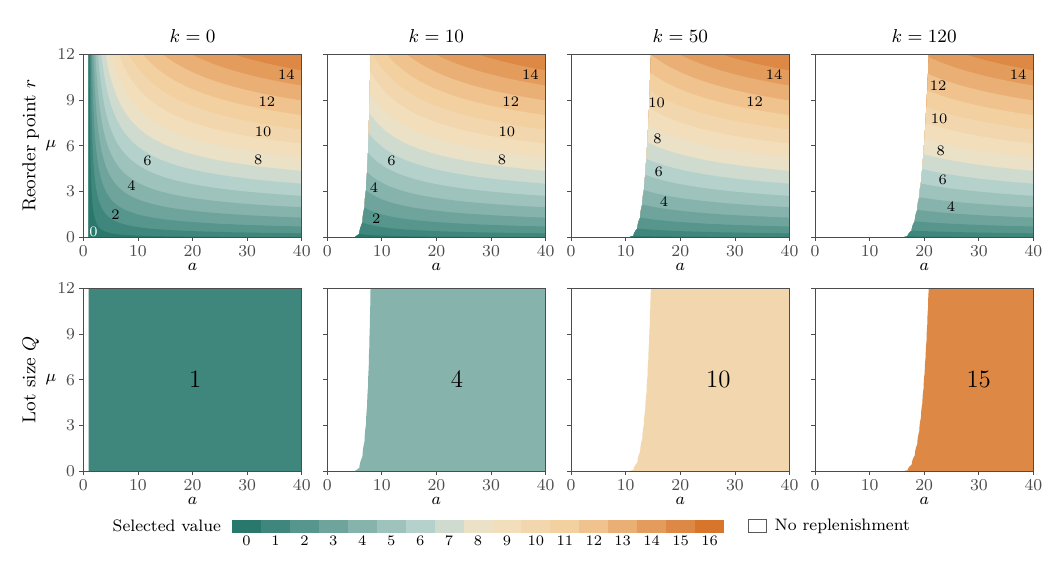}
\caption{Selected reorder points and lot sizes across setup costs}
\label{fig:rq-parameter-maps}
\begin{minipage}{0.98\linewidth}
\SingleSpaced\footnotesize\emph{Notes.} Columns use $k=0,10,50,120$. Shading gives the selected $r=S^\star-1$ (top) and $Q_k$ (bottom). Colored regions labeled $r=0$ use replenishment; white regions indicate no replenishment. Ties select the smallest minimizing base-stock level, and $U_\mu(a)+b_k=a$ selects no replenishment. Boundaries use the Erlang recursion with interpolation between sampled $\mu$ values.
\end{minipage}
\end{figure}

Figure~\ref{fig:rq-parameter-maps} shows the reorder points and lot sizes selected by the algorithm over normalized lead-time demand $\mu$ and lost-sales penalty $a$, for $k=0,10,50,120$. Each column fixes setup cost, with the selected reorder point $r$ above and lot size $Q$ below. At $k=0$, the algorithm selects the cost-minimizing integer base-stock policy, with $r=S^\star-1$ and $Q=1$ wherever replenishment is chosen.

The upper panels agree wherever they all select replenishment, because the rule uses the same $S^\star$ for every $k$. Setup cost changes the order quantity, which is $1,4,10,15$ in the four columns. Each lower panel therefore has one lot size throughout its replenishment region. The surcharges are $b_k=0,4,9.5,15$. Across these four setup costs, the white region expands because a larger surcharge makes the replenishment cost upper bound reach or exceed $a$ at more parameter values.

For example, at $(\mu,a)=(5,20)$, all four columns retain $r=7$ and use the four lot sizes above. The zero-setup cost is about $4.7512$, giving cost bounds $4.7512$, $8.7512$, $14.2512$, and $19.7512$. The first is exact; the other three are upper bounds. At $(\mu,a)=(12,20)$, the $k=120$ column instead selects no replenishment. These choices illustrate how the algorithm combines the zero-setup reorder point with batching and the option of losing every demand. The no-replenishment region is determined by the cost upper bound; computing the candidate's exact cost could justify replenishment at additional parameter values.

To assess the $1.932$ guarantee for the classical policy family, we now minimize full cost jointly over $r$ and $Q$ at each numerical instance. We compare this choice with the heuristic, including no replenishment in both cases. Write $(r^\star,Q^\star)$ for the pair selected by the numerical search. Holding-cost bounds give finite ranges containing every integer pair that can improve on no replenishment. We evaluate policy costs exactly when $\mu=0$, $Q=1$, or $r<Q$, and use simulation for the remaining pairs. The reported simulation costs are evaluated independently after selection. Appendix~\ref{app:numerical-search} gives the cost formulas, search bounds, and accuracy checks.

The optimal system cost over all admissible policies remains unknown. We compare both policies with the same lower bound on $\operatorname{OPT}_k(\mu,a)$. Combining \eqref{eq:operating-transform-main} with Proposition~\ref{prop:rank-lower} gives
\[
\operatorname{OPT}_k(\mu,a)\ge\min\bigl\{\sqrt{2k\theta}+(1-\theta)B_\mu(\eta),\ a-(1-\theta)D_\mu(\eta)\bigr\}
\]
for $\eta\ge1$, $0\le\theta<1$, and $\sqrt{2k\theta}\le a$. We maximize this bound over $\theta$ and $4{,}097$ logarithmically spaced weights from $1$ to $10^4$. We define $\underline C_k(\mu,a)$ as the larger of this maximized bound and the integer batching bound $\min\{a,\min_{Q\in\mathbb N}[(Q+1)/2+k/Q]\}$ derived in Appendix~\ref{app:numerical-search}. For either policy $\pi$ and $a>0$,
\[
\frac{C_{\mu,a,k}(\pi)}{\operatorname{OPT}_k(\mu,a)}-1
\le\frac{C_{\mu,a,k}(\pi)}{\underline C_k(\mu,a)}-1.
\]
Thus, evaluating the right side estimates an upper bound on the policy's relative optimality gap. Figure~\ref{fig:performance-overview} highlights the heuristic $(r,Q)$ policy at $k=10$ and summarizes the largest evaluated ratios for $k=1,10,50,120$. Figure~\ref{fig:positive-setup-gap} in Appendix~\ref{app:numerical-search} gives all eight contour panels, with the heuristic above and the numerically optimized policy below. The search uses $a=0,1,\ldots,100$ and $\mu\in\{0,0.25,0.5,0.75,1,2,\ldots,50\}$.

The largest estimated cost-to-lower-bound ratio on the grid is about $1.59$ for the heuristic and $1.47$ after joint optimization of $r$ and $Q$, compared with the uniform factor $1.932$. We also compare the two policy costs directly. Figure~\ref{fig:heuristic-extra-cost-distribution} shows the distribution of the heuristic's extra cost as a percentage of the numerically optimized cost. Across the $5{,}400$ grid points with $a>0$ for each setup cost, the median extra cost of the heuristic $(r,Q)$ policy relative to the numerically optimized cost is $3.00\%$, $2.33\%$, $3.07\%$, and $3.91\%$ for $k=1,10,50,120$, respectively.

\begin{figure}[!htbp]
\SingleSpaced
\centering
\includegraphics[width=.99\linewidth]{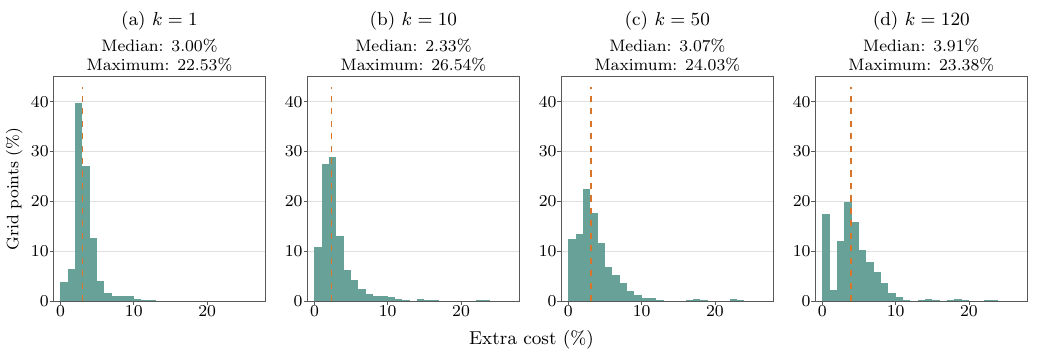}
\caption{Distribution of extra cost from using the heuristic $(r,Q)$ policy}
\label{fig:heuristic-extra-cost-distribution}
\begin{minipage}{0.98\linewidth}
\SingleSpaced\footnotesize\emph{Notes.} Extra cost is the heuristic's cost increase over numerical joint optimization, divided by the optimized cost, in percent. Each panel weights $5{,}400$ grid points with $a>0$ equally; $a=0$ is excluded because both costs vanish. Bars show percentages in bins one percentage point wide; dashed lines mark medians. Costs are exact where available and independently simulated otherwise.
\end{minipage}
\end{figure}

The largest increase is $26.54\%$, at $(\mu,a,k)=(14,8,10)$: the algorithm selects no replenishment at cost $8$, while the joint search selects $(r^\star,Q^\star)=(9,5)$ at estimated cost $6.322$. Equivalently, joint optimization saves $21.0\%$ of the heuristic $(r,Q)$ policy's cost. The algorithm's rejected candidate is $(12,4)$, with cost upper bound about $8.0205$. Because $r\ge Q$ for this candidate, the simple exact-cost refinement above does not resolve this particular case.

Joint optimization of $r$ and $Q$ also reduces cost when both policies replenish. At $(\mu,a,k)=(5,20,50)$, the heuristic chooses $(r,Q)=(7,10)$, while the joint search chooses $(r^\star,Q^\star)=(4,11)$. Their costs are approximately $12.936$ and $12.024$, respectively. Thus, the heuristic costs $7.58\%$ more. Both costs are exact renewal calculations because $r<Q$.\footnote{\label{fn:single-outstanding-order}When an order is triggered, inventory position is $r<Q$, so no earlier lot of $Q$ units can still be pending. Each cycle therefore starts with $r$ units on hand and one lot due after $\mu$ time units. Independent future Poisson demand makes successive order cycles independent with the same distribution. Average cost equals expected cycle cost divided by the expected cycle length, both computable from Poisson probabilities; see Appendix~\ref{app:numerical-search}.} Joint optimization lowers the reorder point and increases the lot size slightly, changing both decisions to reflect setup cost.

The numerical cost ratios are below $1.932$ throughout the sampled grid. Determining the tight worst-case factor with positive setup costs requires bounding the gap between the lower bound and the optimal system cost and considering parameters outside this range.

\section{Concluding Remarks}\label{sec:conclusion}\label{sec:article-conclusion}

An inventory-triggered $(r,Q)$ policy with a fixed integer lot suffices for a finite uniform guarantee in the continuous-review lost-sales model without disposal. Including no replenishment, this family has infimum cost at most $(\sqrt2+\sqrt6)/2<1.932$ times the optimum over all admissible nonanticipative policies. When holding cost and demand rate are positive, our algorithm selects a finite classical policy or no replenishment that meets the full-cost guarantee. At zero setup cost, it returns an optimal integer base-stock policy with the tight worst-case factor $\sqrt2$.

The algorithm separates inventory protection from the cost of placing orders. It first chooses a cost-minimizing base-stock level without setup charges. When this level is positive, it retains the reorder point and uses a lot at the EOQ scale to spread setup costs while preserving at least the baseline service performance. The rule selects no replenishment when its replenishment cost upper bound reaches the cost of losing every demand. Joint optimization can adjust both parameters to the setup cost: across the sampled grid in Section~\ref{sec:numerics}, the median extra cost of the heuristic $(r,Q)$ policy relative to numerical joint optimization ranges from $2.33\%$ to $3.91\%$ for the four setup costs.

Two lower bounds connect these policy decisions to costs that every admissible policy must incur. The ranks and remaining delivery times of units on hand or on order describe supply committed before future demand is known. Tracking these units gives a lower bound on zero-setup cost that is concave in the lost-sales penalty. This concavity turns comparisons at selected penalties into guarantees over the intervals between them. A second lower bound captures the tradeoff between holding inventory and ordering frequently. Matching this bound with the cost of batching the base-stock policy converts the zero-setup factor into the full-cost guarantee.

The tight worst-case factor for the $(r,Q)$ family with positive setup costs remains open. The base-stock example does not establish a $\sqrt2$ lower bound for the $(r,Q)$ policy family: even at $K=0$, allowing $Q>1$ may improve its worst-case factor. What is sharp is the base-stock coefficient used in our conversion, and the holding-cost split is optimal for that coefficient-matching argument. Improving the $1.932$ guarantee therefore calls for additional structure, such as a direct joint analysis of the reorder point and lot size. The computable lower bound can help identify instances on which such improvements matter.

Unit Poisson demand gives the exact base-stock cost formulas and the identities used to track committed supply. A deterministic lead time fixes each ordered unit's delivery date and preserves the order of deliveries. Excluding disposal ensures that every delivered unit remains in inventory until it serves demand. Non-Poisson demand, random lead times, and inventory disposal are natural extensions, each requiring new lower bounds or policy comparisons.

\label{end:manuscript}\clearpage
\renewcommand\normalsizeXIyesCE{\OriginalManuscriptNormalSize\fontsize{11}{19.71}\selectfont\abovedisplayskip=7pt plus 2pt minus 2pt\belowdisplayskip=7pt plus 2pt minus 2pt\abovedisplayshortskip=3pt plus 2pt minus 1pt\belowdisplayshortskip=5pt plus 2pt minus 1pt}
\normalsize\raggedbottom
\predisplaypenalty=10000 
\setcounter{page}{1}
\renewcommand\thepage{A\arabic{page}}
\renewcommand\thesection{\Alph{section}}
\renewcommand\thefigure{\Alph{section}.\arabic{figure}}\setcounter{figure}{0}
\renewcommand\theequation{\Alph{section}.\arabic{equation}}
\renewcommand\thedefinition{\Alph{section}.\arabic{definition}}
\renewcommand\thelemma{\Alph{section}.\arabic{lemma}}
\renewcommand\theproposition{\Alph{section}.\arabic{proposition}}
\renewcommand\thetheorem{\Alph{section}.\arabic{theorem}}
\renewcommand\theremark{\Alph{section}.\arabic{remark}}
\makeatletter
\@addtoreset{figure}{section}\@addtoreset{equation}{section}\@addtoreset{definition}{section}\@addtoreset{lemma}{section}\@addtoreset{proposition}{section}\@addtoreset{theorem}{section}\@addtoreset{remark}{section}
\makeatother
\setcounter{section}{0}\setcounter{equation}{0}\setcounter{definition}{0}\setcounter{lemma}{0}\setcounter{proposition}{0}\setcounter{theorem}{0}\setcounter{remark}{0}
\setlength{\jot}{2pt}
\begin{center}
\textbf{\large Online Appendices to\\``A 1.932 Guarantee for $(r,Q)$ Policies\\in Continuous-Review Lost-Sales Inventory Systems''}
\end{center}
\noindent These online appendices give the full arguments behind the base-stock cost formulas, the all-policy lower bound, the $\sqrt2$ zero-setup guarantee, the tightness construction, and the batching conversion. We recall the principal notation before the proofs.

\noindent The table summarizes the quantities used throughout the appendix.
\begin{center}
\fontsize{8}{12}\selectfont
\begin{tabular}{>{\raggedright\arraybackslash}p{.26\linewidth}>{\raggedright\arraybackslash}p{.65\linewidth}}
\toprule
Quantity & Meaning and where it is established\\
\midrule
\(\mu,a,k\) & Normalized lead-time demand, loss penalty, and setup cost; Section~\ref{sec:model}.\\
\(p_j,F_j,G_l\) & Poisson mass, cumulative probability, and expected excess; Section~\ref{sec:frontiers}.\\
\(e_j,A_j,H_j,v_j\) & Level-$j$ base-stock loss fraction, Erlang normalizer, mean on-hand inventory, and its variance; Section~\ref{sec:frontiers} and Appendix~\ref{app:erlang-details}.\\
\(L_j(a),U_\mu(a)\) & Level-$j$ base-stock zero-setup cost and its minimum over integer base-stock levels; Section~\ref{sec:frontiers}.\\
\(B_\mu(\eta),D_\mu(\eta),\mathcal L_\mu(a)\) & Rank-based zero-setup lower-bound quantities, with weight \(\eta\ge1\); Section~\ref{sec:lower}.\\
\(C_{\mu,a,k}(\pi),\operatorname{OPT}_k(\mu,a)\) & Normalized policy cost and unrestricted optimal cost; Section~\ref{sec:model}.\\
\(S^\star,Q_k,b_k\) & Selected base-stock level, integer lot size, and batching surcharge; Section~\ref{sec:batching}.\\
\bottomrule
\end{tabular}
\end{center}

\section{Base-Stock Costs and Moments}\label{app:erlang-details}
This appendix derives the exact costs and moment bounds used to compare neighboring base-stock levels. Throughout, the base-stock level is an integer and $\mu>0$.

A level-\(j\) unit-lot base-stock system is an \(\mathrm M/\mathrm D/j/j\) loss system: a served demand occupies a position until its replacement arrives \(\mu\) time units later. The Erlang stationary distribution follows from \citet[Theorem 1, p.~241]{Zachary2007}, with its Erlang specialization on p.~240: the occupied-position count has a Poisson\((\mu)\) law conditioned to be at most \(j\). Poisson arrivals see the stationary full-system probability by \citet[Theorem 1, p.~225, and equation (3), p.~226]{Wolff1982}. The same expected average costs hold from the stated initialization. Indeed, couple the canonical and stationary level-$j$ systems using the same future demands. For $\mu>0$, a demand-free interval of length $\mu$ leaves both systems with all $j$ units on hand and no pipeline, so they coalesce. Disjoint intervals succeed independently with probability $e^{-\mu}$, giving a coupling time with finite mean. Inventory is bounded by $j$, and Lemma~\ref{lem:poisson-compensation}, applied up to the coupling time to the difference of the bounded predictable service indicators, bounds the expected difference in sales by the mean coupling time. Thus, their expected average holding and loss costs agree. At $\mu=0$, every positive base-stock policy serves every demand; level zero is the no-order policy.

Let $I_j$ denote stationary on-hand inventory and write $v_j:=\operatorname{Var}(I_j)$. To obtain its moments, let $K_j$ have the law of $N(\mu)$ conditional on $N(\mu)\le j$, so $I_j=j-K_j$. The identities $n p_n=\mu p_{n-1}$ and $n(n-1)p_n=\mu^2p_{n-2}$ give
\[
 \E K_j=\mu\frac{F_{j-1}}{F_j}=\mu(1-e_j),\qquad
 \E[K_j(K_j-1)]=\mu^2\frac{F_{j-2}}{F_j}
 =\mu^2-\mu(\mu+j)e_j.
\]
Here $F_{j-1}/F_j=1-e_j$ and $F_{j-2}/F_j=1-e_j-p_{j-1}/F_j=1-(1+j/\mu)e_j$. Thus, $\E I_j=j-\mu+\mu e_j=H_j$, and its variance is
\begin{equation}\label{eq:erlang-variance}
 v_j=\mu^2-\mu(\mu+j)e_j+\mu(1-e_j)-\mu^2(1-e_j)^2
 =\mu\{1-e_j(j+1-\mu+\mu e_j)\}
 =\mu\{1-e_j(H_j+1)\}.
\end{equation}
\subsection{Moment Identities and a Variance Bound}\label{app:erlang-moments}
The parameter derivatives below hold with the integer base-stock level $j$ fixed. The Poisson identities $p_j'=p_{j-1}-p_j$ and $F_j'=-p_j$ give ${e_j'/e_j}={j/\mu}-1+e_j={H_j/\mu}, H_j'=-1+e_j+\mu e_j'= -{v_j/\mu}$. Also $p_{j+1}=\mu p_j/(j+1)$ and $F_{j+1}=F_j+p_{j+1}$ yield
\begin{equation}\label{eq:erlang-derivatives}
 e_{j+1}=\frac{\mu e_j}{j+1+\mu e_j}
        =\frac{\mu e_j}{\mu+H_j+1},\qquad
 e_j'=\frac{e_jH_j}{\mu},\qquad H_j'=-\frac{v_j}{\mu}.
\end{equation}
The recursion makes $e_j$ strictly decreasing in $j$, since $H_j\ge0$. Moreover $A_j\ge p_0/p_j=j!/\mu^j\to\infty$.

The stationary on-hand inventory $I_j=j-K_j$, where $K_j$ has the law of $N(\mu)$ conditional on $N(\mu)\le j$, satisfies
\begin{equation}\label{eq:erlang-moment-bound}
0<v_j\le H_j(H_j+1)\qquad(j\ge1, \mu>0).
\end{equation}
We prove this bound using the point and tail probabilities of $I_j$. The ratios $\Prb(I_j=l+1)/\Prb(I_j=l)=(j-l)/\mu$ decrease in $l$, so $\Prb(I_j\ge l)/\Prb(I_j=l)$, a sum of products of such ratios, decreases in $l$. Hence, $\Prb(I_j\ge l+1)/\Prb(I_j\ge l)=1-\Prb(I_j=l)/\Prb(I_j\ge l)$ decreases in $l$, which gives $\Prb(I_j\ge l+s)\le\Prb(I_j\ge l)\Prb(I_j\ge s)$. Therefore,
\[
\frac12\E[I_j(I_j-1)]=\sum_{l,s\ge1}\Prb(I_j\ge l+s)
\le\Bigl(\sum_{l\ge1}\Prb(I_j\ge l)\Bigr)^2=H_j^2.
\]
Adding $H_j$ and subtracting $H_j^2$ gives \eqref{eq:erlang-moment-bound}. For $j=0$ the same bound holds with $H_0=v_0=0$.

\subsection{Intersection Penalties and Common Costs}\label{app:erlang-intersections}
We derive the intersection penalty in \eqref{eq:intersection-penalty} and the corresponding common cost. Fix an integer $j\ge0$, and recall that $A_j=1/e_j$ and $\delta_j=e_j-e_{j+1}$. The recursion in \eqref{eq:erlang-derivatives} gives $\delta_j=\frac{e_j(H_j+1)}{\mu+H_j+1}>0$. The identity $H_j=j-\mu+\mu e_j$ gives $H_{j+1}-H_j=1-\mu\delta_j$. Raising the base-stock level from $j$ to $j+1$ therefore increases mean holding cost by $1-\mu\delta_j$ and reduces the lost-sales cost by $a\delta_j$. Thus, $L_{j+1}(a)-L_j(a)=1-(\mu+a)\delta_j$. Setting this difference to zero gives the unique intersection penalty $a_j=\delta_j^{-1}-\mu$. To rewrite it in terms of the variance, multiply \eqref{eq:erlang-variance} by $A_j$ and use $A_je_j=1$ to obtain $A_jv_j=\mu(A_j-H_j-1)$. Combining this identity with the formula for $\delta_j$ gives \eqref{eq:intersection-penalty}:
\[
\begin{aligned}
 a_j&=\delta_j^{-1}-\mu
 =\frac{A_j(\mu+H_j+1)}{H_j+1}-\mu=A_j+\frac{\mu(A_j-H_j-1)}{H_j+1}
 =A_j\left(1+\frac{v_j}{H_j+1}\right).
\end{aligned}
\]

The equality $L_j(a_j)=L_{j+1}(a_j)$ holds because $a_j$ is their intersection penalty. Multiplying \eqref{eq:intersection-penalty} by $e_j$ and using $A_je_j=1$ gives $a_je_j=1+v_j/(H_j+1)$. Substitution into $L_j(a_j)=H_j+a_je_j$ yields $L_j(a_j)=L_{j+1}(a_j)=H_j+1+{v_j/(H_j+1)}$. For the second expression for the common cost, the variance identity in \eqref{eq:erlang-variance} gives ${v_j/(H_j+1)}={\mu/(H_j+1)}-\mu e_j$. Since $H_j=j-\mu+\mu e_j$, the terms $\mu e_j$ cancel: $H_j+1+{v_j/(H_j+1)} =j-\mu+\mu e_j+1+{\mu/(H_j+1)}-\mu e_j =j+1-\mu+{\mu/(H_j+1)}$. Thus, both expressions describe the same cost at $a_j$.

\subsection{Base-Stock Choices in the Proof and the Algorithm}\label{app:base-stock-maps}

At zero setup cost, the normalized model depends only on lead-time demand $\mu$ and the lost-sales penalty $a$. Figure~\ref{fig:base-stock-map} summarizes the base-stock choices over these two parameters. Panel (a) shows the base-stock levels used in Section~\ref{sec:interval-coverage} to establish the $\sqrt2$ guarantee over successive penalty intervals, choosing the smallest level when the intervals overlap. Panel (b) shows the cost-minimizing level $S^\star$ selected by the algorithm. The marked point $(\mu,a)=(5,10)$ illustrates why the two maps can differ: the interval comparison uses level $6$, with cost about $3.878$, while the minimizing rule selects level $7$, with cost about $3.808$. The algorithm's optimal base-stock policy satisfies the $\sqrt2$ guarantee because its cost is no greater than that of the policy used in the proof.

\begin{figure}[!htbp]
\SingleSpaced
\centering
\includegraphics[width=.98\linewidth]{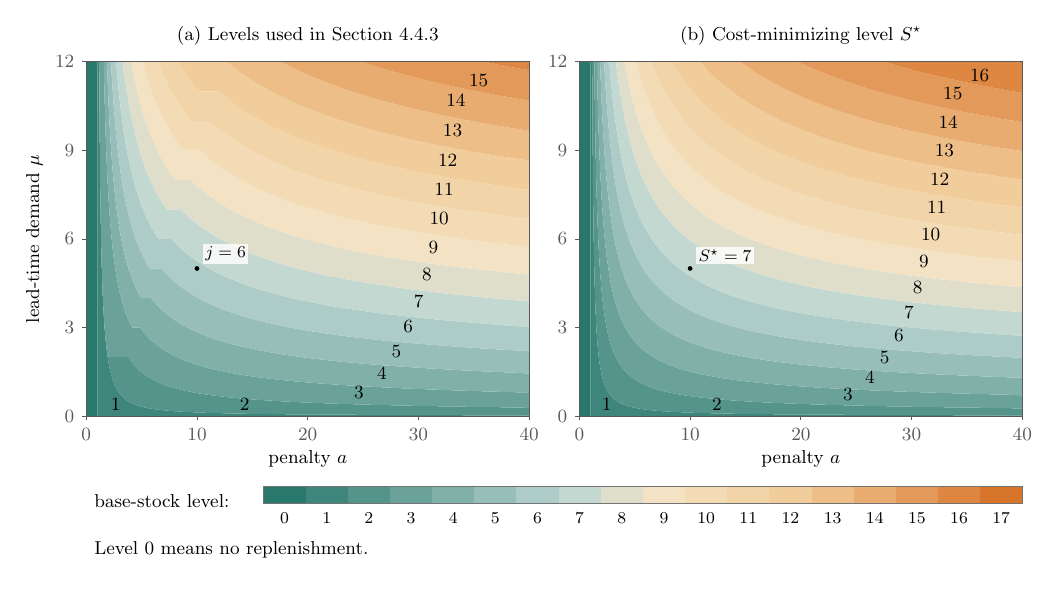}
\caption{Base-stock choices in the proof and in the algorithm}
\label{fig:base-stock-map}
\begin{minipage}{0.98\linewidth}
\SingleSpaced\footnotesize\emph{Notes.} At $k=0$, shading gives the base-stock level; zero means no replenishment. Panel (a) selects the smallest level from the interval comparisons in Section~\ref{sec:interval-coverage}; panel (b) minimizes $L_S(a)$, choosing the smaller level at ties. At the marked point $(\mu,a)=(5,10)$, the levels are $6$ and $7$, respectively.
\end{minipage}
\end{figure}

\section{Initial Supply and the Committed-Supply Bound}\label{app:rank-proof}
\subsection{Removing Finite Initial Supply}\label{app:initial-supply}

For lower bounds it suffices to consider policies starting physically empty. To see this, simulate an admissible policy $\pi$ with its original initialization, and let an empty physical copy place the same new orders at the same epochs. The simulation uses only past demand and independent policy randomness, so the copy is nonanticipative. On a common demand path, deletion of the $Y_\pi(0)<\infty$ initial units, where $Y_\pi(0)$ is the initial inventory position (units on hand or pending at time zero), gives
\[
 X^\circ(t)\le X_\pi(t),\qquad O^\circ(T)=O_\pi(T),\qquad
 0\le \mathcal A_\pi(T)-\mathcal A^\circ(T)\le\min\{Y_\pi(0),N(T)\}.
\]
For every integer $n\ge1$, independence of initial supply and future demand implies ${\E\min\{Y_\pi(0),N(T)\}}/T \le{n/T}+\Prb(Y_\pi(0)>n)$. Letting $T\to\infty$ and then $n\to\infty$ shows that the expected lost service is $o(T)$. Holding and order costs do not increase, so $C_{\mu,a,k}(\pi^\circ)\le C_{\mu,a,k}(\pi)$. This argument allows $\E Y_\pi(0)=\infty$ and arbitrary finite residual delivery times for initial units. The copy remains admissible because its inventory area does not increase and its lost count is bounded by $N(T)$.

\subsection{Proof of Proposition~\ref{prop:rank-lower}}\label{app:rank-potential}
\proof{Proof.} It is enough to prove a positive bound $g\le\min\{B_\mu(\eta),a-D_\mu(\eta)\}$ for an empty-start policy. For each rank $i\ge1$ define
\begin{equation}\label{eq:rank-value-derivative}
 V_i(\tau):=G_i(\tau)+\eta\Prb(N(\tau)\ge i)-g,
 \qquad
 V_i(0)=i-g,\quad
 V_i'(\tau)=\eta p_{i-1}(\tau)-F_{i-1}(\tau).
\end{equation}
The identity $G_i-G_{i-1}=F_{i-1}$ gives $V_i'=V_{i-1}-V_i$ for $i\ge2$. This identity explains our choice of $V_i$: a unit's change of rank at a sale will cancel the drift caused by its approaching delivery.

Rank all present and pending units FIFO, breaking ties deterministically. The $I$ on-hand units have residual time zero and are served first, so they occupy exactly ranks $1,\ldots,I$. If position is $m$ and on-hand stock is $I$, their residual delivery times satisfy $0=\tau_1=\cdots=\tau_I\le \tau_{I+1}\le\cdots\le \tau_m\le\mu$. Choose an integer $M\ge\max\{1,\mu+g\}$ and retain only the first $M$ ranks:
\[
 \Phi_M:=\sum_{i=1}^{m\wedge M}V_i(\tau_i),
 \qquad |\Phi_M|\le M(M+\eta+g).
\]
The truncation makes the potential function bounded even when the policy's inventory position is unbounded. Orders append nonnegative retained terms because $V_i(\mu)\ge B_\mu(\eta)-g\ge0$; any appended units beyond rank $M$ are omitted from the sum and leave $\Phi_M$ unchanged. Deliveries cause no jump.

Suppose $0<I\le M$, and write $m_*:=m\wedge M$. At a served demand, removing the first unit and shifting the other ranks gives exactly
\[
 \begin{split}
 \Delta_{\rm dem}\Phi_M
 &=-V_1(0)+\sum_{i=2}^{m_*}
       \{V_{i-1}(\tau_i)-V_i(\tau_i)\}
       +\mathbf1_{\{m>M\}}V_M(\tau_{M+1})\\
 &=g-I+\sum_{i=I+1}^{m_*}V_i'(\tau_i)
       +\mathbf1_{\{m>M\}}V_M(\tau_{M+1}).
 \end{split}
\]
The first $I$ terms give $-(1-g)-(I-1)=g-I$. Between events the pending terms have aging drift $-\sum_{i=I+1}^{m_*}V_i'(\tau_i)$. When $m>M$, the unit at rank $M+1$ moves to rank $M$ after a sale and enters the retained sum, contributing $V_M(\tau_{M+1})$. This contribution is nonnegative, since $V_M(\tau_{M+1})\ge M-\tau_{M+1}-g\ge0$. Here $G_M(\tau)\ge M-\tau$, $\tau\le\mu$, and $M\ge\mu+g$. If $I>M$, all retained units remain on hand after a sale, so $\Phi_M$ is unchanged. At a stockout, demand leaves every rank unchanged. Thus, writing $\dot\Phi_M$ for the aging drift, the rate-one demand contribution gives the three cases in a single inequality: {\renewcommand{\arraystretch}{0.6}
\[
\begin{aligned}
&I+a\mathbf1_{\{I=0\}}+\Delta_{\rm dem}\Phi_M+\dot\Phi_M\\
&\quad=
\left\{\begin{array}{@{}l@{\quad}l@{}}
I+(g-I)+\mathbf1_{\{m>M\}}V_M(\tau_{M+1})\ge g,
 & 1\le I\le M,\\
I\ge g, & I>M,\\[2pt]
a-\displaystyle\sum_{i=1}^{m\wedge M}V_i'(\tau_i)
 \ge a-D_\mu(\eta)\ge g, & I=0.
\end{array}\right.
\end{aligned}
\]}%
The stockout case uses only the definition of $D_\mu(\eta)$: the first $m\wedge M$ units form an ordered schedule in $[0,\mu]$, one of the schedules admitted in \eqref{eq:schedule-supremum}, so the sum of their rates is at most $D_\mu(\eta)$ in every state.

The demand integrand is bounded and predictable, so Lemma~\ref{lem:poisson-compensation} applies; the aging derivatives are also bounded. Order increments are nonnegative. Because the potential function, each demand jump, and the aging rate are bounded, telescoping bounds the sum of order increments by a constant times $1+T+N(T)$. Thus, they are integrable even when the order count has no finite expectation. Applying the lemma to demand increments and the loss indicator, and retaining the nonnegative order increments, gives
\begin{equation}\label{eq:finite-horizon-rank-bound}
 \E\left[\int_0^T X(t)\,dt+a\,\mathcal U(T)\right]
 \ge gT+\E\Phi_M(0)-\E\Phi_M(T).
\end{equation}
The bounded terminal term approaches zero after division by $T$. The initial-supply argument extends the bound to every admissible policy, proving \eqref{eq:committed-supply-bound} without restricting inventory position. \Halmos

\section{Exact Penalty Calibration}\label{app:calibration-proof}
We prove Proposition~\ref{prop:calibration}. The key observation is that the residual times maximizing individual rank terms are already ordered, so their separate maxima define an admissible delivery schedule in $D_\mu$.

\proof{Proof.} For \(\eta>1\) write \(f:=1-1/\eta\) and \(w_n(\tau):=\eta p_n(\tau)-F_n(\tau)=V_{n+1}'(\tau)\). Thus, index $n$ corresponds to the unit of rank $n+1$. For \(n\ge1\), $w_n'(\tau)=p_n(\tau)\{\eta n/\tau-(\eta-1)\}$. Here we write the argument explicitly: $H_j(t):=j-t+t\,e_j(t)=\E[j-N(t)\mid N(t)\le j]$, where $e_j(t):=p_j(t)/F_j(t)$. Our earlier notation $H_j$ and $e_j$ refers to these functions at $t=\mu$. Since $w_n(\tau)=F_n(\tau)[\eta e_n(\tau)-1]$, the sign of $w_n(\tau)$ is that of $e_n(\tau)-1/\eta$.

Recall the definition in \eqref{eq:schedule-supremum}, using $V_i'=w_{i-1}$:
\[
D_\mu(\eta)=\sup_{m\ge0,\,0\le\tau_1\le\cdots\le\tau_m\le\mu}
\sum_{i=1}^m w_{i-1}(\tau_i).
\]
The supremum allows any number $m$ of pending units and any ordered residual delivery times $\tau_i$ in $[0,\mu]$. For each rank $i\ge1$, write $\tau_i^\star:=\min\{\mu,(i-1)/f\}$ for the time that maximizes $V_i'(\tau)=w_{i-1}(\tau)$ over $[0,\mu]$; by the derivative formula, $w_n$ increases on $[0,n/f]$ and decreases afterward. In particular, $\tau_1^\star=0$ and $w_0(0)=\eta-1$. These maximizing times are nondecreasing in $i$.

The maxima $w_n(\tau_{n+1}^\star)$ are positive for consecutive indices starting at $n=0$. For an interior maximizer with $n\ge1$, $H_n(n/f)>0$ gives $e_n(n/f)=1-f+fH_n(n/f)/n>1/\eta$, so $w_n(n/f)>0$. For indices $n\ge f\mu$, the maximum occurs at $\tau=\mu$, and $w_n(\mu)>0$ if and only if $e_n(\mu)>1/\eta$. Since $e_n(\mu)$ decreases with $n$ by \eqref{eq:erlang-derivatives}, once $w_n(\mu)$ becomes nonpositive, it remains nonpositive for every larger $n$. Only finitely many can be positive: uniformly on \(\tau\le\mu\), \(p_n(\tau)=(\tau/n)p_{n-1}(\tau)\le(\mu/n)F_n(\tau)\), so \(\eta p_n(\tau)<F_n(\tau)\) for \(n>\eta\mu\). It follows that
\begin{equation}\label{eq:schedule-rank-maxima}
D_\mu(\eta)=\sum_{n\ge0}[w_n(\tau_{n+1}^\star)]^+.
\end{equation}
Any feasible schedule has total contribution at most this sum of individual positive maxima. Let $m^\star$ be the number of positive maxima. Equality is attained by choosing $m=m^\star$ and setting $\tau_i=\tau_i^\star$ for $i=1,\ldots,m^\star$; these times are ordered, and the selected units occupy the first $m^\star$ ranks.

For \(B_l:=G_l(\mu)+\eta\mathbb P(N(\mu)\ge l)\), \(B_0=\eta\) and $B_{l+1}-B_l=F_l(\mu)-\eta p_l(\mu)=-w_l(\mu)$. The sequence $(B_l)$ decreases while $w_l(\mu)>0$, which by the monotonicity of $e_l(\mu)$ happens exactly on an initial segment of indices, and is nondecreasing afterward. Its minimum $B_\mu(\eta)$ is, therefore, attained at the end of that segment, and telescoping gives $\sum_{n\ge0}[w_n(\mu)]^+=\eta-B_\mu(\eta)$. If \(n<f\mu\), then \(H_n\ge0\) implies \(e_n\ge1-n/\mu>1/\eta\), so \(w_n(\mu)>0\). All other ranks already maximize at \(\mu\). Hence,
\begin{equation}\label{eq:rank-telescoping}
B_\mu(\eta)+D_\mu(\eta)
=\eta+\sum_{n<f\mu}\{w_n(n/f)-w_n(\mu)\}.
\end{equation}
 Keep $\eta$ (and hence $f=1-1/\eta$) fixed and differentiate the sum on the right of \eqref{eq:rank-telescoping} in $\mu$. For every active index $n<f\mu$, the maximizer $n/f$ of $w_n$ over $[0,\mu]$ is interior and does not move with $\mu$, so
\[
 \frac{d}{d\mu}\{w_n(n/f)-w_n(\mu)\}
 =-w_n'(\mu)=\eta p_n(\mu)\left(f-\frac n\mu\right).
\]
The index set $\{n<f\mu\}$ of the sum in \eqref{eq:rank-telescoping} grows with $\mu$; when an index $n\ge1$ joins it as $\mu$ crosses $n/f$, its summand $w_n(n/f)-w_n(\mu)$ and this derivative are both zero. Thus, there is no boundary term, and the derivative of the sum in \eqref{eq:rank-telescoping} is $\eta\sum_{n<f\mu}p_n(\mu)\left(f-{n/\mu}\right) ={(\eta/\mu)}\,\E(f\mu-N(\mu))^+$. The sum in \eqref{eq:rank-telescoping} vanishes at $\mu=0$: the index $n=0$ is active for every $\mu>0$, with summand $w_0(0)-w_0(\mu)=(\eta-1)(1-e^{-\mu})$, and every index $n\ge1$ joins with a zero summand.

Consequently
\begin{equation}\label{eq:calibration-integral}
B_\mu(\eta)+D_\mu(\eta)
=\eta+\eta\int_0^\mu\frac{\mathbb E(ft-N(t))^+}{t}\,dt.
\end{equation}
 The finite-horizon mean form of the continuous-time Spitzer identity \citep[Proposition~1, p.~1138]{DiaLamberton2011} applies to the integrable L\'evy process $ft-N(t)$. For background on the underlying Wiener--Hopf factorization, see \citet[Section~6.4]{Kyprianou2014}. The identity's positive large-jump integrability condition holds because every jump is negative. The identity gives
\[
\mathbb E\sup_{0\le s\le\mu}\{fs-N(s)\}
=\int_0^\mu\frac{\mathbb E(ft-N(t))^+}{t}\,dt.
\]
 The integrand is bounded above by \(f\), so there is no integrability issue at zero. Substituting this equality in \eqref{eq:calibration-integral} proves \eqref{eq:calibration-identity}.

For \(\eta=1\), \(\eta p_n-F_n\le0\), so \(D_\mu(1)=0\). The rank-one value in \eqref{eq:rank-minimum} is \(1\) and successive rank values are nondecreasing, so \(B_\mu(1)=1\). The expected maximum in \eqref{eq:calibration-map} is zero, and \eqref{eq:calibration-identity} follows at this endpoint as well.

Finally, the expected maximum in \eqref{eq:calibration-map} is nondecreasing in $\eta$. Changing the slope $1-1/\eta$ by $\varepsilon$ changes this expected maximum by at most $\mu|\varepsilon|$. Thus, $\alpha_\mu$ is continuous and strictly increasing, starts at one, and is at least $\eta$. This proves the stated range. \Halmos
\endproof

\section{Bounds on the Largest Excess of Supply over Demand}\label{app:supply-excess}

Throughout this section, \(N\) is a rate one Poisson process and \(W_T(\ell):=\mathbb E\sup_{0\le t\le T}\{(1-\ell)t-N_t\}\), for \(0\le \ell\le1\). Here $W_T(\ell)$ is the expected largest excess of auxiliary cumulative supply $(1-\ell)t$ over demand $N_t$ during $[0,T]$. In this notation, the calibrated penalty \eqref{eq:calibration-map} reads \(\alpha_\mu(\eta)=\eta[1+W_\mu(1/\eta)]\), and Appendix~\ref{app:calibration-proof} established \(B_\mu(\eta)+D_\mu(\eta)=\alpha_\mu(\eta)\). Write \(\phi\) and \(\Phi\) for the standard normal density and distribution, and set \(\overline\Phi:=1-\Phi\) for the upper tail. Also set $w(z):=\phi(z)-z\overline\Phi(z)+{[\Phi(z)-1/2]/z}, z>0, w(0):=\sqrt{2/\pi}$.

\subsection{A Finite-Horizon Brownian Comparison}\label{app:brownian-comparison}
\begin{lemma}[{\sc Brownian Comparison}]\label{lem:brownian-comparison}
For all \(T>0\) and \(0\le \ell\le1\),
\[
 W_T(\ell)\le \sqrt T\,w(\ell\sqrt T).
\]
\end{lemma}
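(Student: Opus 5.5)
The plan is to prove the comparison termwise inside the Spitzer integral already used in Appendix~\ref{app:calibration-proof}, and then to recognize the right side as the Brownian analogue of $W_T(\ell)$.

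\textbf{Step 1 (Spitzer form of $W_T$).} Apply the finite-horizon Spitzer identity of \citet{DiaLamberton2011} to the L\'evy process $(1-\ell)t-N_t$, exactly as in Appendix~\ref{app:calibration-proof}. This gives
\[
W_T(\ell)=\int_0^T\frac{\E\bigl((1-\ell)t-N_t\bigr)^+}{t}\,dt .
\]

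\textbf{Step 2 (Brownian value).} For a standard Brownian motion $B$, the same identity gives
\[
\E\sup_{0\le t\le T}\{B_t-\ell t\}=\int_0^T\frac{\E(\sqrt t\,Z-\ell t)^+}{t}\,dt .
\]
Here $Z$ is standard normal. I would check that this equals $\sqrt T\,w(\ell\sqrt T)$ by direct calculation. Write $z:=\ell\sqrt T$ and substitute $t=Tu^2$. The integral becomes $2\sqrt T\int_0^1[\phi(zu)-zu\,\overline\Phi(zu)]\,du$. Then use $\int_0^1\phi(zu)\,du=[\Phi(z)-1/2]/z$, and integrate $zu\,\overline\Phi(zu)$ by parts, noting that $\frac{d}{du}\overline\Phi(zu)=-z\phi(zu)$. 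Collecting terms should yield $w(z)$. The case $z=0$ gives $2\phi(0)=\sqrt{2/\pi}$, which matches the stated value $w(0)$. This step is routine.

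\textbf{Step 3 (pointwise stop-loss comparison).} It then suffices to show, for every $t>0$ and every $x\ge0$, with $x=\ell t$,
\[
\E(Y_t-x)^+\le\E(\sqrt t\,Z-x)^+,\qquad Y_t:=t-N_t .
\]
Both $Y_t$ and $\sqrt t\,Z$ have mean $0$ and variance $t$, and $Y_t\le t$ has a light right tail. Write $g(x)$ for the Gaussian minus the Poisson stop-loss. Then $g(x)\to0$ as $x\to\infty$. As $x\to-\infty$, $g(x)$ also tends to $0$, because the means agree. Moreover $g''$ is the Gaussian density minus the lattice law of $Y_t$, taken in the distributional sense.

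I would first try a sign-change (variation-diminishing) argument. Compare the distribution functions $P(\sqrt t Z\le y)$ and $P(Y_t\le y)$. Equal first two moments force the difference of the two laws to have at least three sign changes in the Karlin sense. The aim is to show that on $[0,\infty)$ the only crossing pattern is such that $g$ decreases monotonically to $0$, and hence $g\ge0$ there. If this bookkeeping fails for the lattice law, the fallback is a direct analytic bound. This uses the exact formula $\E(t-N_t-x)^+=\sum_{n\le t-x}(t-x-n)p_n(t)=(t-x)F_{\lfloor t-x\rfloor}(t)-tF_{\lfloor t-x\rfloor-1}(t)$. I would compare it with $\sqrt t\,\phi(x/\sqrt t)-x\overline\Phi(x/\sqrt t)$, differentiating in $t$ at fixed $\ell$ via $p_n'=p_{n-1}-p_n$, in the spirit of the Poisson-to-Gaussian inequality referred to in Appendix~\ref{app:poisson-gaussian}.

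\textbf{Step 4 (conclusion).} Divide the inequality of Step 3 by $t$, integrate over $[0,T]$, and combine with Steps 1 and 2 to obtain $W_T(\ell)\le\sqrt T\,w(\ell\sqrt T)$.

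The main obstacle is Step 3. It must hold uniformly in $t$, including small $t$, where the Poisson law is far from Gaussian, and for all $x\ge0$, not merely asymptotically. The restriction $x\ge0$ is essential, since for very negative $x$ the heavy left tail of $Y_t$ reverses the comparison.
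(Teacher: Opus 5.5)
\paragraph{Step 3 is false.} Your Steps 1, 2 and 4 are fine; the substitution $t=Tu^2$ in Step 2 does give $\sqrt T\,w(\ell\sqrt T)$. The problem is that Step 3 asks for a pointwise inequality that does not hold, so the termwise route cannot succeed.

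\emph{Counterexample.} Take $\ell=0$ (so $x=0$) and $t<1$. Only $N_t=0$ contributes, so $\E(t-N_t)^+=te^{-t}$, while $\E(\sqrt t\,Z)^+=\sqrt t\,\phi(0)=\sqrt{t/(2\pi)}$. The required inequality $te^{-t}\le\sqrt{t/(2\pi)}$ therefore fails whenever $\sqrt t\,e^{-t}>1/\sqrt{2\pi}$, roughly for $0.28<t<0.82$. At $t=1/2$, for instance, the Poisson side is $\tfrac12e^{-1/2}\approx0.303$ and the Gaussian side is $1/(2\sqrt\pi)\approx0.282$.

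The failure is not confined to $\ell=0$. With $t=1/2$ and $\ell=0.1$ the two sides are about $0.273$ and $0.258$.

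\emph{Why $x\ge0$ does not help.} Because $Y_t$ and $\sqrt t\,Z$ have equal mean and variance, your $g$ integrates to zero over $\mathbb R$, so it must be negative somewhere. For small $t$, the atom of mass $e^{-t}$ at $Y_t=t$ pushes that negative region into $x\ge0$. No sign-change bookkeeping or differentiation in $t$ can prove a false inequality.

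The lemma is true only after integrating in $t$. For example, at $\ell=0$ and $T<1$ one has $W_T(0)=1-e^{-T}\le\sqrt{2T/\pi}$.

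\paragraph{What is missing.} You need a comparison of the whole running-maximum functional, not of each marginal in the Spitzer integral. The paper does this as follows.
\begin{enumerate}[(i)]
\item Let $\mathcal V(t,y)$ be the expected excess over $y$ of the maximum on $[0,t]$ of Brownian motion with drift $-\ell$. It solves $\mathcal V_t=\ell\mathcal V_y+\tfrac12\mathcal V_{yy}$, and it satisfies $\mathcal V_{yyy}\le0$.
\item Evaluate it along the reflected Poisson process $\mathcal Y_s=\overline{\mathcal X}_s-\mathcal X_s$ at time-to-go $T+\epsilon-s$. By Taylor's formula with $\mathcal V_{yy}$ nonincreasing, a unit jump contributes $\mathcal V(t,y+1)-\mathcal V(t,y)\le\mathcal V_y+\tfrac12\mathcal V_{yy}$.
\item Hence $\overline{\mathcal X}_s+\mathcal V(T+\epsilon-s,\mathcal Y_s)$ has nonpositive drift. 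The regulator term is included, since $\mathcal V_y(t,0)=-1$.
\item Lemma~\ref{lem:poisson-compensation} then gives $W_T(\ell)\le\mathcal V(T+\epsilon,0)$, and letting $\epsilon\downarrow0$ gives $W_T(\ell)\le\mathcal V(T,0)=\sqrt T\,w(\ell\sqrt T)$.
\end{enumerate}
Your Step 2 computation survives as the evaluation of $\mathcal V(T,0)$.
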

\proof{Proof.} The comparison follows because the Brownian continuation value has nonincreasing curvature: a downward Poisson jump then contributes no more than the Brownian second-derivative term. Let \(\mathcal H(t,y)\) be the probability that a standard Brownian motion with drift \(-\ell\) exceeds \(y\) at some time in \([0,t]\), and let \(\mathcal V(t,y):=\int_y^\infty\mathcal H(t,x)\,dx\), the expected excess over \(y\) of its maximum on \([0,t]\). The running-maximum law for Brownian motion with drift \citep{Harrison1985} gives, with $y_+:=(y+\ell t)/\sqrt t$ and $y_-:=(\ell t-y)/\sqrt t$, $\mathcal H(t,y)=\overline\Phi(y_+)+e^{-2\ell y}\Phi(y_-)$. Hence, \(\mathcal V_y=-\mathcal H\) and \(\mathcal V_y(t,0)=-1\). The identity \(e^{-2\ell y}\phi(y_-)=\phi(y_+)\) gives
\begin{align*}
\mathcal V_{yy}(t,y)
 &=\frac{2\phi(y_+)}{\sqrt t}+2\ell e^{-2\ell y}\Phi(y_-),\\
\mathcal V_{yyy}(t,y)
 &=-\frac{2(y+2\ell t)\phi(y_+)}{t^{3/2}}
   -4\ell^2e^{-2\ell y}\Phi(y_-)\le0,\\
\mathcal V_t(t,y)&=\frac{\phi(y_+)}{\sqrt t}-\ell\overline\Phi(y_+)
          =\ell\mathcal V_y(t,y)+\tfrac12\mathcal V_{yy}(t,y).
\end{align*}
For the last line, differentiating the tail gives \(\mathcal H_t=-\partial_y[\phi(y_+)/\sqrt t-\ell\overline\Phi(y_+)]\), whose integral over \([y,\infty)\) is the displayed expression. For every $\epsilon>0$, these formulas show that $\mathcal V$ is $C^{1,2}$ on $[\epsilon,T+\epsilon]\times[0,\infty)$, with derivatives at $y=0$ taken from the right. This is the regularity used in the drift calculation below. Also \(\mathcal V(0,y)=0\).

Write \(f:=1-\ell\), \(\mathcal X_s:=fs-N_s\), \(\overline{\mathcal X}_s:=\sup_{r\le s}\mathcal X_r\), and \(\mathcal Y_s:=\overline{\mathcal X}_s-\mathcal X_s\). Between jumps, \(\mathcal Y\) decreases at speed \(f\) when positive and \(\overline{\mathcal X}\) increases at speed \(f\) when \(\mathcal Y=0\); a jump increases \(\mathcal Y\) by one. For \(\epsilon>0\), at \(t=T+\epsilon-s\) the drift of \(\overline{\mathcal X}_s+\mathcal V(t,\mathcal Y_s)\) is $-\mathcal V_t-f\mathcal V_y+\mathcal V(t,y+1)-\mathcal V(t,y)$. This formula holds at zero as well, because the regulator drift is \(f=-f\mathcal V_y(t,0)\). Taylor's formula with integral remainder gives
\[
\mathcal V(t,y+1)-\mathcal V(t,y)
=\mathcal V_y(t,y)+\int_0^1(1-r)\mathcal V_{yy}(t,y+r)\,dr
\le \mathcal V_y(t,y)+\tfrac12\mathcal V_{yy}(t,y).
\]
The drift is, therefore, at most \(-\mathcal V_t+\ell\mathcal V_y+{(1/2)}\mathcal V_{yy}=0\). For \(0\le s\le T\), the time argument is at least \(\epsilon\), \(|\mathcal V_y|\le1\), \(\mathcal V_{yy}\le2/\sqrt{2\pi\epsilon}+2\ell\), and \(0\le\overline{\mathcal X}_s\le fT\). The demand integrand is predictable and bounded by one in absolute value, so Lemma~\ref{lem:poisson-compensation} gives $\mathbb E\overline{\mathcal X}_T+\mathbb E\mathcal V(\epsilon,\mathcal Y_T)\le \mathcal V(T+\epsilon,0)$. Drop the nonnegative second term and let \(\epsilon\downarrow0\). Here \(\mathbb E\overline{\mathcal X}_T=W_T(\ell)\), and integration of the reflection formula gives \(\mathcal V(T,0)=\sqrt T\,w(\ell\sqrt T)\), proving the comparison. \Halmos

For the estimates below, differentiating the definition of $w$ yields
\[
w'(0)=-\tfrac12,\qquad
w''(z)=\frac{2}{z^3}\int_0^z x^2\phi(x)\,dx
=\sqrt{2/\pi}\int_0^1s^2e^{-z^2s^2/2}\,ds
\quad(z>0).
\]
In particular \(0\le w''\le\sqrt{2/\pi}/3\), including its limit at zero, and Taylor's formula with \(w(0)=\sqrt{2/\pi}\) and \(w'(0)=-1/2\) gives
\begin{equation}\label{eq:workload-taylor}
w(z)\le\sqrt{2/\pi}\left(1+\frac{z^2}6\right)-\frac z2.
\end{equation}
If $f\mu\le1$, where $f=1-1/\eta$, then $ft-N(t)$ can be positive only before the first demand. Hence, only the $n=0$ term can contribute to the sum in \eqref{eq:rank-telescoping}. Thus,
\begin{equation}\label{eq:first-fluid-band}
\alpha_\mu(\eta)=\eta+(\eta-1)(1-e^{-\mu}).
\end{equation}

\subsection{A Uniform Bound on the Stockout Aging Term}\label{app:stockout-aging}
\begin{lemma}[{\sc Stockout Aging Bound}]\label{lem:simple-penalty}
For $\eta\ge1$,
\begin{equation}\label{eq:simple-penalty}
D_\mu(\eta)\le\frac{\eta^2}{2}-\frac \eta3.
\end{equation}
\end{lemma}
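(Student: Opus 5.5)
The plan is to bound $D_\mu(\eta)$ rank by rank and sum, uniformly in $\mu$. By \eqref{eq:schedule-rank-maxima} in Appendix~\ref{app:calibration-proof}, $D_\mu(\eta)=\sum_{n\ge0}[w_n(\tau_{n+1}^\star)]^+$ with $w_n=\eta p_n-F_n$. Each maximum over $[0,\mu]$ is at most the maximum over $[0,\infty)$, which for $n\ge1$ is attained at $\tau=n/f$, where $f=1-1/\eta$. So it suffices to bound
\[
\sum_{n\ge0}\Bigl[\sup_{\tau\ge0}w_n(\tau)\Bigr]^+ ,
\]
which no longer depends on $\mu$. The case $\eta=1$ is trivial, since $D_\mu(1)=0\le1/6$, so take $\eta>1$.

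The $n=0$ term is exactly $w_0(0)=\eta-1$. For $n\ge1$, I would write $w_n(n/f)=w_n(0)+\int_0^{n/f}w_n'(\tau)\,d\tau$, using $w_n(0)=-1$ and $w_n'=p_n(\eta n/\tau-(\eta-1))$. The integrals are Erlang probabilities: $\int_0^x p_n(\tau)\,n/\tau\,d\tau=\int_0^xp_{n-1}=\Prb(\Gamma_n\le x)$ and $\int_0^xp_n=\Prb(\Gamma_{n+1}\le x)$, where $\Gamma_n$ is a sum of $n$ unit exponentials. With $x_n:=n/f$ this gives the exact formula
\[
w_n(x_n)=\eta\,\Prb(\Gamma_n\le x_n)-(\eta-1)\Prb(\Gamma_{n+1}\le x_n)-1=(\eta-1)p_n(x_n)-\Prb(\Gamma_n>x_n).
\]
Hence, after the $n=0$ term, the target is
\[
\sum_{n\ge1}\bigl[(\eta-1)p_n(n/f)-\Prb(N(n/f)<n)\bigr]^+\le\frac{(\eta-1)^2}{2}-\frac\eta3+\frac12 .
\]

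To handle this sum, first note that $\Prb(N(n/f)<n)$ is bounded below by a constant near $1/2$ once $n/f$ exceeds $n$ substantially. More precisely, the mean is $n/f>n$, so this probability exceeds a Gaussian tail in $z=\sqrt n(1/f-1)/\sqrt{1/f}$. Meanwhile $p_n(n/f)\le e^{-n(1/f-1-\log(1/f))}/\sqrt{2\pi n}$ by Stirling. These two bounds show that only finitely many terms are positive, roughly for $n\lesssim(\eta-1)^2$. I would then compare the sum with an integral in $n$: with $1/f-1=1/(\eta-1)$, the terms behave like $(\eta-1)\phi(z)/\sqrt{n}-\overline\Phi(-z)$ with $z\approx\sqrt{n}/(\eta-1)$. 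Substituting $n=(\eta-1)^2s^2$ should produce a leading term $(\eta-1)^2/2$ times an explicit constant at most one, with a lower-order linear correction.

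For small $\eta$, when only a few terms are active, I would verify the bound directly, for example on $1<\eta\le2$ where at most the $n=1$ term can matter. In the large-$\eta$ regime the constant $1/2$ and the correction $-\eta/3$ require a careful Euler--Maclaurin-type comparison. This summation step, obtaining the exact coefficients $1/2$ and $-1/3$ rather than just the order $\eta^2$, is the main obstacle.

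If the Gaussian approximation proves too lossy, my fallback is to bound each positive term by $(\eta-1)p_n(x_n)-\tfrac12+c/\sqrt n$, using a one-sided Poisson median estimate. The count of active indices is then controlled via $e_n(\mu)$ monotonicity as in Appendix~\ref{app:calibration-proof}, and the sum can be evaluated using $\sum_n p_n(\cdot)\le1$-type identities.
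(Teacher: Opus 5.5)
Your reduction is sound. The bound $D_\mu(\eta)\le\sum_{n\ge0}[\sup_{\tau\ge0}w_n(\tau)]^+$, the maximizers $n/f$, and the identity $w_n=(\eta-1)p_n-F_{n-1}$ are all correct. Passing to $[0,\infty)$ also loses nothing, since it is the $T\to\infty$ limit the paper takes as well.

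The gap is everything after that. You never prove the summation bound, and the heuristics you propose for it are wrong. $\Prb(N(n/f)<n)$ is the lower tail of a Poisson variable with mean $n/f>n$, so it is small for large $n$, not near $1/2$. In fact every term of your series is strictly positive. Since $p_{n-1-i}(x)/p_{n-1}(x)=\prod_{m=1}^{i}(n-m)/x$,
\[
F_{n-1}(n/f)=p_{n-1}(n/f)\sum_{i=0}^{n-1}f^i\prod_{m=1}^{i}\Bigl(1-\frac mn\Bigr)<\eta\,p_{n-1}(n/f)=(\eta-1)p_n(n/f).
\]
This is the same fact as $e_n(n/f)>1/\eta$ in Appendix~\ref{app:calibration-proof}. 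Several of your claims fail as a result:
\begin{enumerate}[(i)]
\item There are infinitely many positive terms. At $\eta=2$, for instance, $w_1(2)=e^{-2}$, $w_2(4)=3e^{-4}$, $w_3(6)=11e^{-6}$, and so on.
\item The claims that only $n\lesssim(\eta-1)^2$ contribute, and that only $n=1$ matters for $\eta\le2$, are false.
\item Your fallback bound $(\eta-1)p_n-\tfrac12+c/\sqrt n$ rests on the same reversed tail.
\end{enumerate}
Moreover, the ratio in the display tends to one as $n\to\infty$. Each term is therefore a near-cancellation of two exponentially small quantities, in a large-deviation regime where Gaussian approximations are not accurate.

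The summation step also cannot be done with approximate asymptotics, because the lemma is sharp up to a bounded constant. Your series equals $\lim_{T\to\infty}D_T(\eta)=\eta\,\E\sup_{t\ge0}\{ft-N_t\}=\eta/\lambda^*$. Here $\lambda^*$ is the positive root of $\psi(\lambda):=f\lambda+e^{-\lambda}-1$. Expanding in $1/\eta$ gives $\eta/\lambda^*=\eta^2/2-\eta/3-1/18+O(1/\eta)$. An Euler--Maclaurin or Gaussian comparison would therefore have to reproduce both coefficients $1/2$ and $-1/3$ exactly, and keep every error below roughly $1/18$ uniformly in $\eta$, on a sum of order $\eta^2$.

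The missing idea is not to sum the rank maxima at all. The paper's argument runs as follows:
\begin{enumerate}[(i)]
\item Apply the calibration identity \eqref{eq:calibration-identity} at a lead time $T\ge\mu$ to get $D_\mu(\eta)\le D_T(\eta)=\eta[1+W_T(1/\eta)]-B_T(\eta)$.
\item Bound the expected supremum $W_T$ by optional stopping for the supermartingale $\exp\{\lambda_\eta(ft-N_t)\}$ with $\lambda_\eta=6/(3\eta-2)$. Its drift is nonpositive by $e^{-z}\le(z^2-4z+6)/(2z+6)$. Because there are no upward jumps, the process equals $x$ at $\tau_x$. This gives $\Prb(\tau_x\le T)\le e^{-\lambda_\eta x}$ and hence $W_T\le\eta/2-1/3$.
\item Let $T\to\infty$, using $B_T(\eta)\to\eta$.
\end{enumerate}
In your notation, the lemma is exactly $\lambda^*\ge\lambda_\eta$. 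That follows from $\psi(\lambda_\eta)\le0$ together with the convexity of $\psi$.
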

\proof{Proof.} Write $f:=1-1/\eta$ and $\lambda_\eta:=6/(3\eta-2)$. First, for $z\ge0$, $e^{-z}\le\frac{z^2-4z+6}{2z+6}$. Indeed, the difference $e^z(z^2-4z+6)-(2z+6)$ vanishes at zero together with its first derivative, and its second derivative is $e^zz^2\ge0$. The process $\exp\{\lambda(ft-N_t)\}$ has drift rate $f\lambda+e^{-\lambda}-1$ times its current value, and the inequality above with $z=\lambda_\eta$ shows that the choice of $\lambda_\eta$ makes this rate nonpositive: $f\lambda_\eta+e^{-\lambda_\eta}-1 \le\frac{\lambda_\eta[(3\eta-2)\lambda_\eta-6]}{\eta(2\lambda_\eta+6)}=0$. Thus, $\exp\{\lambda_\eta(ft-N_t)\}$ is a nonnegative supermartingale. Stop it at $\tau_x\wedge T$, where $\tau_x=\inf\{t:ft-N_t\ge x\}$ and $x>0$. This is a bounded stopping time; moreover the stopped exponential is at most $e^{\lambda_\eta fT}$, so the optional stopping theorem applies. Since the process has no upward jumps, its value at $\tau_x$ is $x$. Therefore, using $W_T(1/\eta)=\mathbb E\sup_{t\le T}(ft-N_t)=\int_0^\infty\Prb(\tau_x\le T)\,dx$,
\[
1\ge\mathbb E e^{\lambda_\eta(f(\tau_x\wedge T)-N_{\tau_x\wedge T})}
\ge e^{\lambda_\eta x}\Prb(\tau_x\le T),\qquad
W_T(1/\eta)\le\int_0^\infty e^{-\lambda_\eta x}\,dx
=\frac \eta2-\frac13.
\]
The residual-schedule domains increase with their horizon, so $D_\mu(\eta)\le D_T(\eta)$ for $T\ge\mu$. Applying \eqref{eq:calibration-identity} with lead-time demand $T$ in place of $\mu$ gives $D_T(\eta)=\eta[1+W_T(1/\eta)]-B_T(\eta)$, and the bound $W_T(1/\eta)\le1/\lambda_\eta$ just obtained then yields
\begin{equation}\label{eq:horizon-bound}
D_\mu(\eta)\le D_T(\eta)\le \eta+\eta/\lambda_\eta-B_T(\eta).
\end{equation}
It remains to show $B_T(\eta)\to \eta$. The positive terms in $\eta-B_T(\eta)=\sum_n[\eta p_n(T)-F_n(T)]^+$ form an initial segment, say $0,\ldots,i$. If it is nonempty, then $0\le \eta-B_T(\eta)\le \eta F_i(T)<\eta^2p_i(T)\le \eta^2\max_n p_n(T)$; the same bound is immediate for an empty segment. For $T\ge1$, the maximum is attained at the mode $\lfloor T\rfloor\ge1$. Stirling's lower bound $k!\ge\sqrt{2\pi k}(k/e)^k$ \citep{Robbins1955} and $\log x\le x-1$ give
\[
\max_n p_n(T)=\frac{e^{-T}T^{\lfloor T\rfloor}}{\lfloor T\rfloor!}
\le\frac{\exp\{\lfloor T\rfloor-T+\lfloor T\rfloor\log(T/\lfloor T\rfloor)\}}{\sqrt{2\pi\lfloor T\rfloor}}
\le\frac1{\sqrt{2\pi\lfloor T\rfloor}}\longrightarrow0\qquad\text{as }T\to\infty.
\]
Thus, $B_T(\eta)\to\eta$, and letting $T\to\infty$ in \eqref{eq:horizon-bound} gives $D_\mu(\eta)\le\eta/\lambda_\eta=\eta^2/2-\eta/3$, which is \eqref{eq:simple-penalty}. \Halmos

\section{The Starting Penalty for Small Base-Stock Levels}\label{app:small-stock}
\proof{Proof of Lemma~\ref{lem:endpoints} part~(i).} At $\eta^\circ=A_1=1+1/\mu$, the successive differences $F_i-\eta^\circ p_i$ of the rank terms in \eqref{eq:rank-minimum} (Appendix~\ref{app:calibration-proof}) are negative at $i=0$, zero at $i=1$, and positive for $i\ge2$, so ranks one and two minimize $B_\mu$. Moreover, $(1-1/\eta^\circ)\mu=\mu/(\mu+1)<1$, and \eqref{eq:first-fluid-band}, together with $B_\mu(\eta^\circ)=G_1(\mu)+\eta^\circ\Prb(N(\mu)\ge1)=e^{-\mu}+(1+1/\mu)(1-e^{-\mu})$, yields $a^\circ=1+{(2-e^{-\mu})/\mu}, g^\circ=B_\mu(\eta^\circ)=1+{(1-e^{-\mu})/\mu}$. We bound the base-stock cost at level one first, compare base stock two with it, and then handle base stock three for $\mu\ge2$.

For base stock one, $e^\mu\ge1+\mu$ gives $1-e^{-\mu}\ge\mu/(1+\mu)$. Its ratio is
\[
 \frac{L_1(a^\circ)}{g^\circ}
 =\frac{\mu}{\mu+1}\left(1+\frac2{\mu+1-e^{-\mu}}\right)
 \le1+\frac{\mu}{(\mu+1)(\mu+2)}<\frac54.
\]
The first inequality uses $1-e^{-\mu}\ge\mu/(1+\mu)$ in the denominator $\mu+1-e^{-\mu}$, and the last follows from $(\mu+1)(\mu+2)-4\mu=(\mu-1/2)^2+7/4>0$. The exact costs for base stocks one and two are $L_1(a)={(1+a\mu)/(\mu+1)}, L_2(a)=\frac{2\mu+4+a\mu^2}{\mu^2+2\mu+2}$. At $a=a^\circ$, their difference simplifies to $L_2(a^\circ)-L_1(a^\circ) =\frac{e^{-\mu}(\mu+2)-2}{(\mu+1)(\mu^2+2\mu+2)}<0$, because $e^{-\mu}\le1/(1+\mu)<2/(\mu+2)$.

For base stock three, with $P_3:=\mu^3+3\mu^2+6\mu+6$, the exact cost is
\[
 L_3(a)=\frac{3\mu^2+12\mu+18+a\mu^3}{P_3}
 \quad\Longrightarrow\quad
 P_3L_3(1+2/\mu)
 =\mu^3+5\mu^2+12\mu+18=(1+2/\mu)P_3-12/\mu.
\]
Since $L_3$ is increasing in $a$ and $a^\circ<1+2/\mu$, this gives $L_3(a^\circ)<1+2/\mu$. For $\mu\ge2$, the exponential series gives $e^{-\mu}\le e^{-2}<1/7$, because $e^2>\sum_{n=0}^4 2^n/n!=7$; hence $g^\circ\ge1+6/(7\mu)$ and ${L_3(a^\circ)/g^\circ} <\frac{\mu+2}{\mu+6/7}\le{7/5}$. The last inequality is exactly $\mu\ge2$, completing \eqref{eq:low-price-anchor}. \Halmos

\section{Base-Stock Comparisons at or Above Mean Demand}\label{app:upper-stock}
\proof{Proof of Lemma~\ref{lem:endpoints} part~(iii).} Fix an integer $j\ge\mu>0$. At weight $A_j$, two neighboring ranks minimize $B_\mu$. We first express this lower bound through conditional Poisson moments, then compare both base-stock costs with the positive root of the same quadratic.

\paragraph{The conditional-variance identity.}
Write $N\sim\operatorname{Pois}(\mu)$ and set $\Delta:=j+1-\mu\ge1, {q_+}:={\mu p_j/(1-F_j)}$, $H_+:=\E[N-j-1\mid N>j], v_+:=\operatorname{Var}(N\mid N>j)$. Both conditioning events have positive probability. Shifting indices in the Poisson sums gives the upper-tail moments $\E[N\mid N>j]=\mu+{q_+}, \E[N(N-1)\mid N>j]=\mu^2+(\mu+j){q_+}$. Consequently $H_+={q_+}-\Delta$ and
\[
 v_+=\mu^2+(\mu+j){q_+}+\mu+{q_+}-(\mu+{q_+})^2
     =\mu+\Delta {q_+}-{q_+}^2=\mu-H_+(H_++\Delta).
\]
On the lower side, $H_j+1=\Delta+\mu p_j/F_j$, so $v_j=\mu-\mu e_j(H_j+1)=\mu-(H_j+1)(H_j+1-\Delta)$ and $H_j+1+H_+=\mu p_j/(F_j(1-F_j))$. At weight $A_j$, the rank differences $F_i-A_jp_i$ change sign at $i=j$; hence ranks $j$ and $j+1$ minimize $B_\mu(A_j)$. Evaluating rank $j+1$ and using $A_j-H_j-1=A_jv_j/\mu$ gives
\begin{equation}\label{eq:underloaded-moments}
 \begin{aligned}
 B_\mu(A_j)&=F_j(H_j+1)+A_j(1-F_j)=H_j+1+(A_j-H_j-1)(1-F_j)=H_j+1+\frac{v_j}{H_j+1+H_+},\\
 v_j&=\mu-(H_j+1)(H_j+1-\Delta),\qquad v_+=\mu-H_+(H_++\Delta).
 \end{aligned}
\end{equation}
The cancellation behind the quadratic bound is now visible:
\begin{align*}
 &B_\mu(A_j)(B_\mu(A_j)-\Delta)-\mu=-v_j+\frac{v_j(2H_j+2-\Delta)}{H_j+1+H_+}+\frac{v_j^2}{(H_j+1+H_+)^2}\\
 &\quad=\frac{v_j\{(H_j+1-H_+-\Delta)(H_j+1+H_+)+v_j\}}{(H_j+1+H_+)^2}=\frac{v_j\{\mu-H_+(H_++\Delta)\}}{(H_j+1+H_+)^2}.
\end{align*}
Therefore,
\begin{equation}\label{eq:underloaded-root}
\begin{aligned}
 B_\mu(A_j)\bigl(B_\mu(A_j)-\Delta\bigr)-\mu
 &=\frac{v_jv_+}{(H_j+1+H_+)^2}\ge0,\\
 B_\mu(A_j)&\ge\beta:=\frac{\Delta+\sqrt{\Delta^2+4\mu}}2.
\end{aligned}
\end{equation}
The second line follows from the first because $x(x-\Delta)-\mu$ is nonnegative only outside its two roots, of which $\beta$ is the larger and the other is negative, while $B_\mu(A_j)>0$.

\paragraph{The two cost bounds.}
First consider the adjacent-intersection cost $L_j(a_j)=H_j+1+v_j/(H_j+1)=\Delta+\mu/(H_j+1)$. The bound $v_j\le H_j(H_j+1)$ gives $\mu\le (H_j+1)(2H_j+1-\Delta)$, and hence $L_j(a_j)(L_j(a_j)-\Delta)=\mu\left({\Delta/(H_j+1)}+{\mu/(H_j+1)^2}\right) \le\mu(2-1/(H_j+1))<2\mu$. By contrast, $\beta(\beta-\Delta)=\mu$ implies $(\sqrt2\beta)(\sqrt2\beta-\Delta) =2\mu+(2-\sqrt2)\Delta\beta>2\mu$. Since $x(x-\Delta)$ increases for $x\ge\Delta$, we have $L_j(a_j)<\sqrt2\beta$.

Next, define $\sigma_j:=\sqrt\mu/A_j$. Expanding the integer power and using $\int_0^\infty t^l e^{-t}\,dt=l!$ gives the exact integral
\[
 A_j=\sum_{l=0}^j(j)_l\mu^{-l}
 =\int_0^\infty e^{-t}(1+t/\mu)^j\,dt
 \ge\int_0^\infty e^{-t^2/(2\mu)}\,dt
 =\sqrt{\pi/2}\,\sqrt\mu.
\]
The inequality uses $j\log(1+y)\ge\mu\log(1+y)\ge\mu(y-y^2/2)$ for $y=t/\mu$; the last logarithmic bound follows by integrating $(1+y)^{-1}\ge1-y$. Thus, $0<\sigma_j\le\sqrt{2/\pi}<4/5$. The comparison with Brownian motion in Appendix~\ref{app:brownian-comparison} and \eqref{eq:workload-taylor} give
\[
 \frac{W_\mu(1/A_j)}{\sqrt\mu}+\sigma_j\le w(\sigma_j)+\sigma_j
 <\frac45+\frac{\sigma_j}2+\frac{2\sigma_j^2}{15}
 \le\frac{482}{375}<\frac97<2\sqrt{\sqrt2-1}.
\]
The middle expression increases in $\sigma_j$, so $\sigma_j\le4/5$ gives the bound $482/375$. Here $9/7$ is simply a convenient bound below the coefficient required by the next arithmetic--geometric mean inequality. The two strict constant comparisons are verified by $482/375=9/7-1/2625$ and $2-(277/196)^2=103/38416>0$. Since $H_j+1=\Delta+\sigma_j \sqrt\mu$ and $e_jA_j=1$, the definition \eqref{eq:calibration-map} of $\alpha_\mu$ gives $L_j(\alpha_\mu(A_j))=H_j+1+W_\mu(1/A_j)<\Delta+{(9/7)}\sqrt\mu$. On the other hand, $\sqrt2\beta-\Delta =(\sqrt2-1)\beta+{\mu/\beta} \ge2\sqrt{\sqrt2-1}\,\sqrt\mu>{(9/7)}\sqrt\mu$. This proves $H_j+1+W_\mu(1/A_j)<\sqrt2\beta$.

It remains to bound base stock $j+1$. The Erlang recursion and $v_j=\mu-\mu e_j(H_j+1)$ show
\[
H_{j+1}-H_j=1-\frac{\mu e_j(H_j+1)}{\mu+H_j+1}=\frac{H_j+1+v_j}{\mu+H_j+1},\qquad e_{j+1}A_j=\frac{\mu}{\mu+H_j+1}.
\]
\begingroup\displaywidowpenalty=10000
The calibrated cost of base stock $j+1$ is, therefore, a convex combination of the two costs already bounded:
\[
\begin{aligned}
 &L_{j+1}(\alpha_\mu(A_j))=H_j+1+\frac{\mu W_\mu(1/A_j)+v_j}{\mu+H_j+1}\\
 &\quad=\frac\mu{\mu+H_j+1}(H_j+1+W_\mu(1/A_j))+\frac{H_j+1}{\mu+H_j+1}L_j(a_j)<\sqrt2\beta\le\sqrt2 B_\mu(A_j).
\end{aligned}
\]
\endgroup
This proves \eqref{eq:calibrated-endpoint}. \Halmos

\section{A Poisson-to-Gaussian Rank Bound}
\label{app:poisson-gaussian}
\begin{lemma}[{\sc Poisson-to-Gaussian Rank Bound}]\label{lem:poisson-gaussian-rank}
For $G(s):=\inf_{z\in\mathbb R} \{\phi(z)+z\Phi(z)+s[1-\Phi(z)]\}$,
\begin{equation}\label{eq:poisson-gaussian-rank}
 B_\mu(\eta)>\sqrt\mu\,G(\eta/\sqrt\mu),\qquad \mu>0,\quad \eta\ge1.
\end{equation}
\end{lemma}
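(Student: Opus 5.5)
The plan is to compare the two minimizations one candidate at a time. For each integer rank $l\ge0$ I will exhibit a real $z$ such that
\[
G_l(\mu)+\eta\Prb(N(\mu)\ge l)>\sqrt\mu\bigl[\phi(z)+z\Phi(z)\bigr]+\eta\bigl[1-\Phi(z)\bigr].
\]
The right side is at least $\sqrt\mu\,G(\eta/\sqrt\mu)$, and there is a minimizing rank in \eqref{eq:rank-minimum} because the rank terms eventually increase (Appendix~\ref{app:calibration-proof}). Taking the minimum over $l$ then gives \eqref{eq:poisson-gaussian-rank}. Let $Y:=\mu+\sqrt\mu Z$ with $Z$ standard normal. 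Then $\sqrt\mu[\phi(z)+z\Phi(z)]=\E(c-Y)^+$ and $1-\Phi(z)=\Prb(Y\ge c)$, where $c:=\mu+z\sqrt\mu$. So the claim compares $\E(l-N)^++\eta\Prb(N\ge l)$ with its Gaussian analogue at the threshold $c$.

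\textbf{Choosing $z$.} I choose $z$ by quantile matching: $\Phi(z)=p:=F_{l-1}(\mu)=\Prb(N<l)$. This makes the two $\eta$-terms equal, so $\eta$ drops out and only the expected-excess terms must be compared. The case $l=0$ is immediate. There $p=0$ and the left side is $\eta$, while the right side tends to $\eta$ as $z\to-\infty$ without reaching it; a slightly negative finite $z$ suffices.

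\textbf{Rewriting as a quantile comparison.} For $l\ge1$, write both expected excesses as areas under quantile functions up to level $p$. With $q_N,q_Y$ the left-continuous quantile functions,
\[
\E(l-N)^+=\int_0^{p}\bigl(l-q_N(u)\bigr)\,du,\qquad
\E(c-Y)^+=\int_0^{p}\bigl(c-q_Y(u)\bigr)\,du.
\]
Both identities hold because $\{N<l\}$ and $\{Y<c\}$ are the quantile events $\{U<p\}$ under the common coupling $N=q_N(U)$, $Y=q_Y(U)$. It therefore suffices to show
\[
q_N(u)-q_Y(u)\le l-c\quad\text{for }0<u<p,
\]
with strict inequality on a set of positive measure.

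\textbf{The main obstacle.} This is a left-tail Poisson-to-Gaussian quantile inequality, and I expect it to be the hard step. The intuition is that the Poisson law is right-skewed: its lower tail is thinner than that of $Y$, so the gap $q_N-q_Y$ should be largest near the top level $p$. At that level $q_N(p-)=l-1$ and $q_Y(p)=c$, so the gap there is $l-1-c$, which is within the required bound $l-c$ with one unit of slack from integer rounding.

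To make this rigorous I would first try to prove the CDF inequality
\[
\Prb(N\le i)\ge\Phi\!\left(\frac{i-\mu+(c-l+1)}{\sqrt\mu}\right),\qquad 0\le i\le l-1,
\]
which is equivalent to the quantile statement. The planned argument has three steps.
\begin{enumerate}[(a)]
\item Equality essentially holds at $i=l-1$, by the choice of $z$.
\item Move downward in $i$ using the ratio $\Prb(N\le i-1)/\Prb(N\le i)$. The log-concavity argument already used for \eqref{eq:erlang-moment-bound} shows this ratio decreases in $i$, which gives a log-concavity of the Poisson lower tail.
\item Compare with the Gaussian ratio $\Phi(x-1/\sqrt\mu)/\Phi(x)$ through the Mills-ratio behaviour of $\Phi$.
\end{enumerate}

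If this discrete comparison proves awkward, the fallback is to work with the exact integral $A_j=\int_0^\infty e^{-t}(1+t/\mu)^j\,dt$. Combined with the estimate $j\log(1+y)\ge\mu(y-y^2/2)$ used in Appendix~\ref{app:upper-stock}, this should bound $F_{l-1}$ and $G_l$ against their Gaussian counterparts directly. Strictness comes from the integer slack noted above, which leaves a strict gap on a set of positive measure.
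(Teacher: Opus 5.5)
Your reduction is the same as the paper's. Quantile matching $\Phi(z)=F_{l-1}(\mu)$ removes $\eta$ and leaves $\E(l-N)^+>\E(c-Y)^+$. The gap is in how you propose to prove this inequality.

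\textbf{The pointwise quantile condition is false.} The sufficient condition $q_N(u)-q_Y(u)\le l-c$ on $(0,p)$ fails for every $\mu>0$ and $l\ge1$. For $u\le F_0(\mu)$ you have $q_N(u)=0$, while $q_Y(u)=\mu+\sqrt\mu\,\Phi^{-1}(u)\to-\infty$ as $u\downarrow0$. So $q_N-q_Y$ is unbounded at the lowest levels. The heuristic behind your plan points the wrong way: because the Poisson lower tail is thinner, $q_N-q_Y$ is largest near $u=0$, not near $p$.

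\textbf{The CDF reformulation also fails.} It is not equivalent to the quantile statement, since it drops exactly the levels $u\le F_0(\mu)$. It is also false on its own. For $\mu=100$, $l=2$, $i=0$ it requires $\Phi^{-1}(F_1(\mu))-\Phi^{-1}(F_0(\mu))\le 1/\sqrt\mu=0.1$. These quantiles are about $-13.55$ and $-13.89$, a difference of about $0.33$. Even if it held, summing over $i$ would control only $\int_{c-l}^{c}\Prb(Y\le y)\,dy$. The Gaussian tail $\int_{-\infty}^{c-l}\Prb(Y\le y)\,dy$ would remain unaccounted for.

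The lemma is true only in aggregate. The unit of integer slack at the threshold ($l-N\ge1$ on $\{N<l\}$) has to pay for the whole unbounded Gaussian lower tail, and your level-by-level domination cannot capture that trade. Your fallback is not developed into a proof. The estimate $j\log(1+y)\ge\mu(y-y^2/2)$ you cite needs $j\ge\mu$, so it cannot reach small ranks.

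\textbf{How the paper does it.} The paper makes the aggregate comparison by fixing the rank $k$ and varying $\mu$. It writes the two sides as
\begin{itemize}
\item $\E(k-N)^+=F_{k-1}(\mu)\,m_\Gamma$, where $m_\Gamma$ is the mean residual life of a $\Gamma(k,1)$ variable at $\mu$;
\item the Gaussian side $\sqrt\mu\,\Phi(z_k)\,m_G$, with $m_G=z_k+\phi(z_k)/\Phi(z_k)$.
\end{itemize}
It then shows that $\mathcal T:=m_\Gamma-\sqrt\mu\,m_G$ satisfies $\mathcal T'<0$ at every zero. This step uses the strict gamma--normal quantile bound $z_k\sqrt\mu<k-\mu$, a Poisson form of the Alfers--Dinges inequality. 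It also shows $\mathcal T\to1-1/\sqrt2>0$ as $\mu\to\infty$. So $\mathcal T$ never vanishes and is positive for all $\mu>0$.

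A smaller point: for rank zero, $z$ must be \emph{sufficiently} negative, namely $z+\phi(z)/\Phi(z)<\eta/\sqrt\mu$, not merely slightly negative.
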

\proof{Proof.} The argument compares residual means at matching gamma and normal tail probabilities, then uses the gamma--Poisson identity to return to the rank cost.

Fix an integer $k\ge1$ and let $\Gamma(k,1)$ denote a gamma random variable with shape $k$ and unit scale, whose density is $t^{k-1}e^{-t}/(k-1)!$ for $t>0$. The waiting-time identity for the Poisson process, that the $k$th arrival occurs after time $\mu$ exactly when $N(\mu)\le k-1$, gives $\Prb(\Gamma(k,1)>\mu)=F_{k-1}(\mu)$. Write $z_k:=\Phi^{-1}(F_{k-1}(\mu)), {q_G}:={\phi(z_k)/\Phi(z_k)}, m_G:=z_k+{q_G}, m_\Gamma:=\mathbb E[\Gamma(k,1)-\mu\mid\Gamma(k,1)>\mu]$. First we need the strict quantile bound
\begin{equation}\label{eq:gamma-normal-quantile-gap}
z_k\sqrt\mu<k-\mu.
\end{equation}
Set $\zeta(\mu):=\operatorname{sgn}(k-\mu) \sqrt{2[k\log(k/\mu)-k+\mu]}$. The ratio of the gamma density to $-d\Phi(\zeta)/d\mu$ is ${\sqrt{2\pi}\,k^ke^{-k}/(k-1)!}{(|\zeta|/|k-\mu|)}$, with its continuous value at $\mu=k$; this follows from $\zeta\zeta'=1-k/\mu$ and $\phi(\zeta)=(\mu/k)^ke^{k-\mu}/\sqrt{2\pi}$. This ratio decreases strictly from infinity to zero. Indeed, the endpoint asymptotics are
\[
\begin{aligned}
\mu\downarrow0:&\quad \zeta\sim\sqrt{2k\log(k/\mu)},
&\frac{|\zeta|}{|k-\mu|}&\longrightarrow\infty,\\
\mu\to\infty:&\quad \zeta\sim-\sqrt{2\mu},
&\frac{|\zeta|}{|k-\mu|}&\sim\sqrt{2/\mu}\longrightarrow0.
\end{aligned}
\]
To check that the ratio decreases in $\mu$, write $\nu:=\mu/k$; since $\zeta^2=2k(\nu-1-\log \nu)$ and $(k-\mu)^2=k^2(\nu-1)^2$, the square of the ratio is $\mathrm{const}\cdot(\nu-1-\log \nu)/(\nu-1)^2$ with $\mathrm{const}=4\pi k^{2k-1}e^{-2k}/((k-1)!)^2$, and the derivative of $(\nu-1-\log \nu)/(\nu-1)^2$ has the sign of ${(-\nu+\nu^{-1}+2\log \nu)/(\nu-1)^3}<0$. The numerator vanishes at one and has derivative $-(\nu-1)^2/\nu^2$. Since $F_{k-1}'(\mu)$ is minus the gamma density and $\zeta$ decreases in $\mu$, the derivative $(F_{k-1}(\mu)-\Phi(\zeta))'=(-d\Phi(\zeta)/d\mu)(1-\text{ratio})$ is negative before the ratio crosses one and positive afterward. Since $F_{k-1}(\mu)-\Phi(\zeta)$ tends to zero at both endpoints, $F_{k-1}(\mu)<\Phi(\zeta)$. This inequality is the Poisson form of the universal inequalities of \citet{AlfersDinges1984}, proved in full by \citet{ZubkovSerov2013}; we include a short proof because the strict form is needed. Also, with $\nu=\mu/k$ as above, $(k-\mu)^2/\mu-\zeta^2=(k^2/\mu)\,\Xi(\nu)$ for $\Xi(\nu):=1-\nu^2+2\nu\log\nu$, which has $\Xi(1)=0$ and $\Xi'(\nu)=2(1-\nu+\log\nu)\le0$. Thus, $\Xi$ has the sign of $1-\nu$, that is, of $k-\mu$; taking signed square roots gives $\zeta\le(k-\mu)/\sqrt\mu$. Combining this with $z_k<\zeta$, which holds because $\Phi(z_k)=F_{k-1}(\mu)<\Phi(\zeta)$, proves \eqref{eq:gamma-normal-quantile-gap}.

Let ${q_\Gamma}:=p_{k-1}(\mu)/F_{k-1}(\mu)$ be the gamma hazard at $\mu$, the density divided by the tail probability. The Erlang tail formula and one integration by parts give $F_{k-1}(\mu)=e^{-\mu}\sum_{n=0}^{k-1}{\mu^n/n!}, m_\Gamma=k-\mu+\mu{q_\Gamma}$. The residual-mean formula $F_{k-1}(\mu)m_\Gamma=\int_\mu^\infty \Prb(\Gamma(k,1)>t)\,dt$, together with $F_{k-1}'(\mu)=-{q_\Gamma} F_{k-1}(\mu)$, yields $m_\Gamma'={q_\Gamma} m_\Gamma-1$. Also, differentiating $\Phi(z_k)=F_{k-1}(\mu)$ gives $z_k'=-{q_\Gamma}/{q_G}$, and $dm_G/dz_k=1+dq_G/dz_k=1-{q_G}m_G$. Hence, for $\mathcal T(\mu):=m_\Gamma-\sqrt\mu m_G$, the chain rule gives
\[
\mathcal T'=m_\Gamma'-\frac{m_G}{2\sqrt\mu}-\sqrt\mu\,z_k'\,\frac{dm_G}{dz_k}
={q_\Gamma} \mathcal T+\frac{\sqrt\mu{q_\Gamma}-{q_G}}{{q_G}}
             -\frac{m_G}{2\sqrt\mu}.
\]
At a zero of $\mathcal T$, the identity for $m_\Gamma$ gives ${q_\Gamma}={q_G}/\sqrt\mu-(k-\mu-z_k\sqrt\mu)/\mu$, so $\mathcal T'=-{(k-\mu-z_k\sqrt\mu)/({q_G}\sqrt\mu)}-{m_G/(2\sqrt\mu)}<0$. The first term is negative by \eqref{eq:gamma-normal-quantile-gap}, and $m_G>0$ follows, for example, from $\phi(z_k)+z_k\Phi(z_k)=\int_{-\infty}^{z_k}(z_k-x)\phi(x)\,dx>0$.

The crossing argument also needs the sign of $\mathcal T(\mu)$ for large $\mu$. It follows from an asymptotic expansion of the ratio $\mathcal M(z):=\Phi(z)/\phi(z)=\int_0^\infty e^{zt-t^2/2}\,dt$ (the Mills ratio), which we also use below. Substituting $t=v/x$ and expanding $e^{-v^2/(2x^2)}$ give the classical expansion \citep[equation 26.2.12]{AbramowitzStegun1964}
\begin{equation}\label{eq:mills-for-comparisons}
\mathcal M(-x)=x^{-1}-x^{-3}+O(x^{-5}),\qquad
{q_G}(-x)=x+x^{-1}+O(x^{-3})\quad(x\to\infty).
\end{equation}
The remainder is integrable because $0\le e^{-a}-1+a\le a^2/2$ for $a\ge0$. For fixed $k$, the finite Erlang sum gives $m_\Gamma=1+O(\mu^{-1}), \log F_{k-1}(\mu)=-\mu+(k-1)\log\mu-\log(k-1)!+O(\mu^{-1})$. Writing $z_k=-x$, the equality $F_{k-1}(\mu)=\phi(x)\mathcal M(-x)$ and \eqref{eq:mills-for-comparisons} imply $x^2/2+\log x=\mu+O(\log\mu)$, so $x/\sqrt\mu\to\sqrt2$. Since $m_G(-x)=x^{-1}+O(x^{-3})$, it follows that $\mathcal T\to1-1/\sqrt2>0$. If $\mathcal T(\mu_0)\le0$ for some $\mu_0>0$, continuity and eventual positivity imply that the zeros of $\mathcal T$ in $[\mu_0,\infty)$ have a largest element $\mu_1$. Then $\mathcal T(\mu)>0$ for every $\mu>\mu_1$, so $\mathcal T'(\mu_1)\ge0$, contradicting the strictly negative derivative at every zero. Thus, $m_\Gamma>\sqrt\mu m_G$ for all $\mu>0$.

Finally, for $N\sim\operatorname{Pois}(\mu)$, the identity $n p_n(\mu)=\mu p_{n-1}(\mu)$ shows explicitly how the residual mean enters the rank cost: $\mathbb E(k-N)^+ =(k-\mu)F_{k-1}(\mu)+\mu p_{k-1}(\mu)=F_{k-1}(\mu)m_\Gamma$. Therefore, using $F_{k-1}(\mu)=\Phi(z_k)$, $m_\Gamma>\sqrt\mu\,m_G$, and the definition of $G$,
\[
\begin{aligned}
\mathbb E(k-N)^++\eta\Prb(N\ge k)
 &=F_{k-1}(\mu)m_\Gamma+\eta(1-F_{k-1}(\mu))\\
 &>\sqrt\mu\{\phi(z_k)+z_k\Phi(z_k)+(\eta/\sqrt\mu)[1-\Phi(z_k)]\}
 \ge\sqrt\mu\,G(\eta/\sqrt\mu).
\end{aligned}
\]
The rank-zero cost is $\eta$, which satisfies the same strict bound: for finite $s>0$, choosing $z$ sufficiently negative makes $\phi(z)+(z-s)\Phi(z)=\Phi(z)[m_G(z)-s]<0$, and hence $G(s)<s$. The minimum over ranks is attained because $\mathbb E(k-N)^+\ge k-\mu\to\infty$. This proves \eqref{eq:poisson-gaussian-rank}. \Halmos

\subsection{A Rational Gaussian Bound}\label{app:rational-gaussian}
\begin{lemma}[{\sc Rational Gaussian Bound}]\label{lem:rational-gaussian}
For $s>0$,
\begin{equation}\label{eq:overload-global-profile}
G(s)>\frac{13s}{13+2s^2}.
\end{equation}
\end{lemma}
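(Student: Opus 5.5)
The plan rests on the first-order structure of the infimum. For fixed $s>0$ write $h_s(z):=\phi(z)+z\Phi(z)+s[1-\Phi(z)]$. Then $h_s'(z)=\Phi(z)-s\phi(z)$, and the Mills ratio $\mathcal M(z)=\Phi(z)/\phi(z)$ from \eqref{eq:mills-for-comparisons} increases strictly from $0$ to $\infty$. Hence $h_s$ is convex, and its unique minimizer $z_s$ solves $\mathcal M(z_s)=s$.

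This gives an exact parametrization of the curve $s\mapsto G(s)$ by $z\in\mathbb R$:
\[
s=\mathcal M(z),\qquad G(s)=\phi(z)+z\Phi(z)+\mathcal M(z)\overline\Phi(z).
\]
I will also use two structural facts. First, $G$ is concave and nondecreasing, being an infimum of affine functions with nonnegative slopes $1-\Phi(z)$. Second, the target $r(s):=13s/(13+2s^2)$ increases on $(0,\sqrt{13/2}]$ and decreases afterward. The proof then splits into three ranges of $s$.

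\emph{Small $s$.} Here $r(s)=s-2s^3/(13+2s^2)$, so it suffices to show
\[
s-G(s)=\sup_z\Phi(z)\,[s-m_G(z)]<\frac{2s^3}{13+2s^2},
\]
where $m_G(z)=z+\phi(z)/\Phi(z)$ as in Appendix~\ref{app:poisson-gaussian}.

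\begin{itemize}
\item The supremum is positive only where $m_G(z)<s$. For small $s$ this forces $z\approx-1/s$, because $m_G(-x)=x^{-1}+O(x^{-3})$.
\item On that set $\Phi(z)\le\phi(z)/|z|$, so the supremum is of order $s\,\phi(1/s)$, which is exponentially small in $1/s^2$.
\item To make this rigorous, I will use explicit two-sided Mills-ratio bounds (for $x>0$, $x^{-1}-x^{-3}\le\mathcal M(-x)\le x^{-1}$). These give an explicit threshold $s_0$ (I expect something like $s_0\approx1/2$ to work) below which the cubic margin dominates.
\end{itemize}

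\emph{Large $s$.} Because $G$ is nondecreasing and $r$ is decreasing beyond $\sqrt{13/2}$, it suffices to verify $G(S)>r(S)$ at a single point $S\ge\sqrt{13/2}$. For $s\ge S$ this gives
\[
G(s)\ge G(S)>r(S)\ge r(s).
\]

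\emph{Middle range $[s_0,S]$.} I will cover this compact interval by a finite grid $s_i=\mathcal M(\zeta_i)$, chosen through grid points $\zeta_i$ in $z$. Choosing the grid in $z$ lets the parametrization above evaluate each $G(s_i)$ exactly from $\phi$ and $\Phi$ values. On each cell $[s_i,s_{i+1}]$, monotonicity of $G$ gives $G(s)\ge G(s_i)$, so it suffices to check:

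\begin{itemize}
\item $G(s_i)>r(s_{i+1})$ on cells where $r$ is increasing;
\item $G(s_i)>r(s_i)$ on cells where $r$ is decreasing.
\end{itemize}

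Concavity of $G$ can refine this on the increasing side: replace $G(s_i)$ by the chord between $G(s_i)$ and $G(s_{i+1})$, and compare that chord with $r$ using $r''$ bounds. This reduces how fine the grid must be.

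The main obstacle is the middle range. The constant $13$ appears to be chosen close to sharp near $s\approx1$–$3$, so the margin there may be small, and the cell inequalities must be certified with rigorous error bounds on $\Phi$ and $\phi$ rather than floating-point values. I would first try a coarse grid together with the concavity chord refinement. If the margin is too thin, I would instead work with the one-variable inequality in $z$,
\[
\phi(z)+z\Phi(z)+\mathcal M(z)\overline\Phi(z)>\frac{13\,\mathcal M(z)}{13+2\mathcal M(z)^2},
\]
clearing denominators and bounding $\Phi$ by rational Mills-ratio approximations on subintervals of $z$.
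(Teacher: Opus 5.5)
Your route differs from the paper's. It is sound in outline, but it is a computer-assisted verification, whereas the paper's proof is purely analytic and needs no case split in $s$.

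\textbf{The paper's argument.} The paper uses the same parametrization by the minimizer $z_s^\star=\mathcal M^{-1}(s)$. It then introduces $\mathcal S(z):=z/q_G+\tfrac{13}{2}q_Gm_G$ and proves $\mathcal S>4$ by a crossing argument.
\begin{itemize}
\item At any point where $\mathcal S=4$, write $\xi=z/q_G$. Eliminating $q_G^2$ turns $\tfrac{13}{2}(1+\xi)q_G\mathcal S'$ into $\tfrac12+4\xi+9\xi^2-2\xi^3$, which is positive on $-1<\xi<4$.
\item Meanwhile $\mathcal S\to11/2$ as $z\to-\infty$, so $\mathcal S$ can never reach $4$.
\item Using $1/\phi=\mathcal M(z)+\mathcal M(-z)$, the target inequality at $z_s^\star$ becomes positivity of $\mathcal M(-z)+m_G-\tfrac{13}{2}q_G(1-q_Gm_G)$.
\item Cauchy--Schwarz, $\mathcal M(z)\mathcal M(-z)\ge\pi/2$, bounds this expression below by $q_G(\pi/2-11/2+\mathcal S)>q_G(\pi-3)/2$.
\end{itemize}

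\textbf{What each approach buys.} The paper gets a short certificate valid for all $s>0$ at once. Your approach is conceptually simpler and would certify any comparable profile with a positive margin. The price is a set of certified evaluations of $\Phi$, which you have not yet carried out. That computation is comfortable:
\begin{itemize}
\item $G-r$ is smallest, about $0.016$, near $s\approx1.2$.
\item On the increasing branch, $0\le r'\le1$, so a plain monotone grid with step about $0.01$ suffices. The concavity refinement is unnecessary.
\item You can stop once $G$ exceeds $\max r=\sqrt{13/2}/2\approx1.275$, which happens near $s\approx1.9$.
\end{itemize}

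\textbf{Convexity slip.} $h_s$ is not convex: $h_s''(z)=\phi(z)(1+sz)<0$ for $z<-1/s$. What you actually need, and what holds, is that $h_s'=\phi(\mathcal M-s)$ changes sign exactly once.

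\textbf{Small-$s$ estimate.} Your small-$s$ estimate does not reach $s_0=1/2$.
\begin{itemize}
\item At $s=1/2$, the minimizer is $z_s\approx-1.57$ and $\{m_G<1/2\}\approx\{z<-1.13\}$, far from $-1/s=-2$.
\item So restricting to $\{m_G<s\}$ and using $s-m_G\le s$ gives at best $s\Phi(m_G^{-1}(s))\approx0.065$. The required bound is $2s^3/(13+2s^2)\approx0.0185$.
\item The Mills bounds you quote yield only $0\le m_G(-x)\le x/(x^2-1)$ for $x>1$. That is an upper bound on $m_G$, so it cannot localize $\{m_G<s\}$.
\end{itemize}
Evaluate at the minimizer instead. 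There $s-G(s)=s\phi(z_s)(s-z_s-1/s)\le s^2\phi(z_s)$, because $|z_s|\le1/s$. For $s<2/(3\sqrt3)$, the bound $\mathcal M(-x)\ge x^{-1}-x^{-3}$ places $|z_s|$ above the root in $(\sqrt3,\infty)$ of $x^{-1}-x^{-3}=s$. This closes the tail for $s$ up to about $0.35$, and your grid can start there.
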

\proof{Proof.} For a real $z$, recall that $q_G(z)=\phi(z)/\Phi(z)=1/\mathcal M(z)$ and $m_G(z)=z+q_G(z)$. We suppress their arguments in the derivative calculations below. Differentiating the expression minimized in $G(s)$ gives $\Phi(z)-s\phi(z)=\phi(z)[\mathcal M(z)-s]$. Differentiating the integral form of $\mathcal M$ gives $\mathcal M'=\int_0^\infty te^{zt-t^2/2}\,dt=1+z\mathcal M>0$, the second equality by integration by parts, and $\mathcal M$ ranges from zero to infinity. Thus, the unique minimizer is $z_s^\star:=\mathcal M^{-1}(s)$, characterized by $s=\mathcal M(z_s^\star)$. Write $\mathcal S(z):=z/{q_G}+{(13/2)}{q_G}m_G$. We first prove $\mathcal S>4$. Since ${q_G}=1/\mathcal M$ gives ${q_G}'=-{q_G}m_G$ and $m_G'=1-{q_G}m_G$, setting $\xi:=z/{q_G}$ gives ${q_G}\mathcal S'=1+{q_G}^2\bigl[\xi(1+\xi)+{(13/2)}\bigr]-{(13/2)}{q_G}^4(1+\xi)(\xi+2)$. Suppose that $\mathcal S=4$ at some $z$. Then $m_G={q_G}(1+\xi)>0$ implies $\xi>-1$, and $\mathcal S=\xi+{(13/2)}{q_G}^2(1+\xi)=4$ implies ${q_G}^2=2(4-\xi)/[13(1+\xi)]$ and $\xi<4$. Eliminating ${q_G}^2$ in the preceding derivative yields ${(13/2)}(1+\xi){q_G}\mathcal S'={1/2}+4\xi+9\xi^2-2\xi^3>0$, where the inequality holds because, for $-1<\xi\le0$, the cubic equals $9(\xi+2/9)^2+1/18-2\xi^3>0$, and, for $0\le\xi<4$, it equals $1/2+4\xi+\xi^2(9-2\xi)>0$. By \eqref{eq:mills-for-comparisons}, $z/{q_G}\to-1$ and ${q_G}m_G\to1$ as $z\to-\infty$, so $\mathcal S\to11/2$. If $\mathcal S$ ever reached $4$, then $\mathcal S'\le0$ would hold at the first such $z$, contradicting the positive derivative just shown. Thus, $\mathcal S>4$ everywhere.

To connect this bound to $G$, Cauchy--Schwarz gives $\mathcal M(z)\mathcal M(-z)\ge \left(\int_0^\infty e^{-t^2/2}\,dt\right)^2=\pi/2$. At the minimizer $z_s^\star$, we have $\Phi(z_s^\star)=s\phi(z_s^\star)$ and ${G(s)/s}=1-\Phi(z_s^\star)\bigl[1-q_G(z_s^\star)m_G(z_s^\star)\bigr]$. Since $s=1/q_G(z_s^\star)$, the desired bound $G(s)>13s/(13+2s^2)$ is equivalent to positivity of the numerator on the left below:
\[
\begin{aligned}
\frac{[2+13q_G(z_s^\star)^2]G(s)/s-13q_G(z_s^\star)^2}{2\phi(z_s^\star)}
&=\mathcal M(-z_s^\star)+m_G(z_s^\star)-\frac{13}2q_G(z_s^\star)\bigl[1-q_G(z_s^\star)m_G(z_s^\star)\bigr]\\
&\ge q_G(z_s^\star)\left[\frac\pi2-\frac{11}2+\mathcal S(z_s^\star)\right]
>q_G(z_s^\star)\,\frac{\pi-3}2>0.
\end{aligned}
\]
The equality uses $1/\phi(z_s^\star)=\mathcal M(z_s^\star)+\mathcal M(-z_s^\star)$ and $\mathcal M(z_s^\star)=1/q_G(z_s^\star)$. The first inequality uses the Cauchy--Schwarz bound $\mathcal M(-z_s^\star)\ge(\pi/2)q_G(z_s^\star)$ and the definition of $\mathcal S(z_s^\star)$; the strict inequality uses $\mathcal S(z_s^\star)>4$. This proves \eqref{eq:overload-global-profile}. \Halmos

\section{Base-Stock Comparisons Below Mean Demand}\label{app:below-mean-comparisons}

Fix an integer $j\ge1$ and $\mu\ge\max\{j,3\}$. At the intersection of the costs of base stocks $j$ and $j+1$, write
\[
{\vartheta}:=\frac{v_j}{\mu},\qquad
L_j(a_j)=H_j+1+\frac{v_j}{H_j+1},\quad \tilde H_j:=H_j-\frac{v_j}{H_j+1},\quad a_j=\frac{H_j+1+v_j}{1-{\vartheta}}.
\]
Here $L_j(a_j)=L_{j+1}(a_j)$. We seek a rank weight $\widehat{\eta}_j\ge1$ for which both $B_\mu(\widehat{\eta}_j)$ and $a_j-D_\mu(\widehat{\eta}_j)$ are at least $L_j(a_j)/\sqrt2$. The proof first obtains a relation between the two moments $H_j,v_j$, then uses it to bound the available penalty $a_j$, and finally chooses $\widehat{\eta}_j$.

The variance bound~\eqref{eq:erlang-moment-bound} and the identity $\mu-H_j(H_j+1)-v_j=(H_j+1)(\mu-j)$, with $H_j\ge0$ and $\mu\ge j$, give
\begin{equation}\label{eq:overload-domain}
0<v_j\le H_j(H_j+1),\qquad \mu\ge H_j(H_j+1)+v_j=(H_j+1)(L_j(a_j)-1).
\end{equation}
Thus, $H_j>0$, $L_j(a_j)>1$, $\tilde H_j\ge0$, and $0<{\vartheta}\le1/2$; the last bound uses $\mu\ge H_j(H_j+1)+v_j\ge2v_j$. To separate scale from the moment ratios, write
\[
\rho:=\frac{v_j}{(H_j+1)^2}\in(0,1),\qquad u:=\frac{1}{H_j+1}>0,\qquad
\kappa:=\frac{\tilde H_j}{L_j(a_j)}=\frac{1-\rho-u}{1+\rho}.
\]
After this change of variables, $L_j(a_j)/(H_j+1)=1+\rho$ and $\tilde H_j/(H_j+1)=1-\rho-u$.

\subsection{A Moment Inequality}
\begin{lemma}[{\sc Moment Inequality}]\label{lem:overload-deficit}
In the notation above, for integers $j\ge1$ and $\mu\ge j$,
\begin{equation}\label{eq:overload-deficit}
\mathcal Z:=\frac{2{\vartheta}}{7}-\kappa^2>0,\qquad L_j(a_j)>3\tilde H_j.
\end{equation}
\end{lemma}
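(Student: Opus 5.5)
The plan is to reduce both inequalities to statements about the truncated Poisson law of $I_j=j-K_j$ and then check them along the one-parameter family in which $j$ is fixed and $\mu\ge j$ varies. First I would record everything in terms of $e_j$ and $H_j$ alone. By \eqref{eq:erlang-variance}, ${\vartheta}=1-e_j(H_j+1)$ and $v_j=\mu{\vartheta}$. Moreover $H_j+1\ge1$ and $\rho\le1-u$ by \eqref{eq:erlang-moment-bound}, so $\kappa\ge0$.

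The second claim, $L_j(a_j)>3\tilde H_j$, is equivalent to $\kappa<1/3$, that is, $4\rho+3u>2$. This is a \emph{lower} bound on the variance of on-hand stock relative to its mean. I would derive it from the same ratio structure used for \eqref{eq:erlang-moment-bound}: $\Prb(I_j=l+1)/\Prb(I_j=l)=(j-l)/\mu\le j/\mu\le1$ when $\mu\ge j$. I would compare $I_j$ with the geometric law of ratio $j/\mu$, for which $v=H(H+1)$ and $\kappa=0$, and with the limiting half-normal law at $\mu=j\to\infty$, for which $\rho\approx\pi/2-1$ and $\kappa\approx0.273$.

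For the first claim I would use the $\mu$-derivatives in \eqref{eq:erlang-derivatives}: $e_j'=e_jH_j/\mu$ and $H_j'=-v_j/\mu$. Differentiating the variance identity then gives $v_j'$ as well, so ${\vartheta}$ and $\kappa$ both obey explicit first-order equations in $\mu$ for fixed $j$. I would show that $\mathcal Z=2{\vartheta}/7-\kappa^2$ cannot reach zero, using a first-crossing argument in the style of Appendix~\ref{app:poisson-gaussian}: at any zero of $\mathcal Z$ its $\mu$-derivative has a definite sign, and $\mathcal Z>0$ near the endpoints.

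The endpoint checks are asymptotic. As $\mu\to\infty$ with $j$ fixed, $H_j\sim j/\mu$, ${\vartheta}$ is of order $H_j^2$ and $\kappa$ is of order $H_j^2$, so $\kappa^2\ll{\vartheta}$. At the other endpoint $\mu=j$, I would evaluate exactly for small $j$; there $\mu\ge3$ forces $j\ge3$ only when $\mu=j$, and otherwise the values are finite Erlang sums. For large $j$ I would compare with the Gaussian profile, using the integral representation $A_j=\int_0^\infty e^{-t}(1+t/\mu)^j\,dt$ from Appendix~\ref{app:upper-stock}. In that limit ${\vartheta}\to1-2/\pi$, so $2{\vartheta}/7\approx0.104$ against $\kappa^2\approx0.075$, which leaves some slack.

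The main obstacle is the intermediate regime, $\mu/j$ of order one with $j$ moderate. There the margin $0.104$ versus $0.075$ is only about 30\%. I would first try to prove that $\kappa$ is nonincreasing and ${\vartheta}/\kappa^2$ is nondecreasing in $\mu$ on $[j,\infty)$, which places the worst case at $\mu=j$. If that monotonicity fails, I would fall back on bounding $\kappa$ and ${\vartheta}$ separately by rational functions of $\sqrt{j}/\mu$, obtained from the Mills-ratio expansion \eqref{eq:mills-for-comparisons}. The small integer cases ($j\le$ a modest threshold) would then be handled by direct evaluation of the explicit Erlang formulas.
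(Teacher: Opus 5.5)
Your architecture matches the paper's: fix $j$, vary $\mu\ge j$, get first-order equations for ${\vartheta}$ and $\kappa$ from \eqref{eq:erlang-derivatives}, and show $\mathcal Z$ crosses zero in one direction only. Your asymptotics as $\mu\to\infty$ are also correct. But the proposal never supplies what makes the sign of $\mathcal Z'$ at a zero definite, which is why an unresolved ``intermediate regime'' remains. Substituting ${\vartheta}=7\kappa^2/2$ into the derivative formulas gives
\[
(H_j+1)\mathcal Z'=-\frac{7\kappa^2}{2\rho(1+\rho)}\Bigl[\kappa\Bigl(\frac27-\kappa^2\Bigr)(1+\rho)^2+\frac47\rho(1+\rho)(1+\kappa)-4\kappa\rho\Bigr].
\]
The bracket is a cubic in $\kappa$ that turns negative once $\kappa$ is moderately large. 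At $\rho=1/4$ it is negative at $\kappa=1/2$, and even at $\kappa=1/\sqrt7$, the most the crude bound ${\vartheta}\le1/2$ allows.

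The missing idea is the moment form of the hypothesis $\mu\ge j$. The identity $\mu-H_j(H_j+1)-v_j=(H_j+1)(\mu-j)$ gives $\mu\ge(H_j+1)(L_j(a_j)-1)$. Combined with $2v_j/(H_j+1)=L_j(a_j)-\tilde H_j-1$, this becomes ${\vartheta}\le\frac{1-\kappa}{2}-\frac{\kappa}{2(L_j(a_j)-1)}$. At a zero of $\mathcal Z$ we have $\kappa>0$ (because ${\vartheta}>0$), so the bound is strict, forcing $7\kappa^2+\kappa<1$ and hence $\kappa<6/19$. On $[0,6/19]$ the bracket is concave in $\kappa$ and positive at both endpoints for every $0<\rho<1$, so $\mathcal Z'<0$ at every zero. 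Your proposal uses $\mu\ge j$ only qualitatively, through $(j-l)/\mu\le1$, and that does not suffice.

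Once crossings are downward only, positivity for large $\mu$ finishes the first claim by itself: a largest zero would need $\mathcal Z'\ge0$. No evaluation at $\mu=j$ is needed. The rest of your plan for the first claim is therefore unnecessary, and as written it is not a proof. That includes the Gaussian comparison at $\mu=j$ for large $j$, direct evaluation for small $j$, and the conjectured monotonicity of $\kappa$ and ${\vartheta}/\kappa^2$. The half-normal limit carries no uniform error bound, and no finite check covers all $j$. Also, the lemma assumes only $\mu\ge j$, so $j=1,2$ must be covered; your remark about $\mu\ge3$ does not apply.

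Your plan for $L_j(a_j)>3\tilde H_j$ likewise has no mechanism. Matching the geometric and half-normal extremes is heuristic. The decreasing-ratio structure yields upper bounds on $v_j$, as in \eqref{eq:erlang-moment-bound}, not the lower bound $4\rho+3u>2$ you need. In the paper this claim follows from the first claim and the same inequality: $7\kappa^2<2{\vartheta}<1-\kappa$ gives $\kappa<6/19<1/3$ when $\kappa>0$, and the case $\kappa=0$ is immediate.
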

\proof{Proof.} The assumption $\mu\ge3$ is not needed for the crossing argument of the current proof. Fix $j$ and differentiate with respect to $\mu$, writing $f'$ for $df/d\mu$. The identities ${\vartheta}=1-e_j(H_j+1)$, $L_j(a_j)=j+1-\mu+\mu/(H_j+1)$, and $\tilde H_j=2H_j+1-L_j(a_j)$, together with~\eqref{eq:erlang-derivatives}, give
\[
\begin{aligned}
H_j'&=-{\vartheta},& \frac{d}{d\mu}L_j(a_j)&=-\frac{\tilde H_j}{H_j+1},\\
\tilde H_j'&=\frac{\tilde H_j}{H_j+1}-2{\vartheta},& {\vartheta}'&=-\frac{\tilde H_j(1-{\vartheta})}\mu.
\end{aligned}
\]
For example, ${\vartheta}'=-e_j'(H_j+1)-e_jH_j'=e_j({\vartheta}-H_j(H_j+1)/\mu)=-e_j(H_j+1)\tilde H_j/\mu$. We now express the derivatives of ${\vartheta}$ and $\kappa$ in the variables $\rho,\kappa,{\vartheta}$, multiplying each by $H_j+1$ to clear denominators. Since $\mu=v_j/{\vartheta}=\rho(H_j+1)^2/{\vartheta}$ and $\tilde H_j=\kappa L_j(a_j)=\kappa(1+\rho)(H_j+1)$, the formula for ${\vartheta}'$ becomes
\[
(H_j+1){\vartheta}'=-\frac{(H_j+1)\tilde H_j(1-{\vartheta})}{\mu}=-\frac{{\vartheta}(1-{\vartheta})\kappa(1+\rho)}\rho.
\]
By the quotient rule and the formulas for $\tilde H_j'$ and the derivative of $L_j(a_j)$,
\[
\begin{aligned}
(H_j+1)\kappa'&=(H_j+1)\left[\frac{\tilde H_j'}{L_j(a_j)}-\frac{\tilde H_j}{L_j(a_j)^2}\frac{d}{d\mu}L_j(a_j)\right]\\
&=\frac{\tilde H_j}{L_j(a_j)}-\frac{2{\vartheta}(H_j+1)}{L_j(a_j)}+\frac{\tilde H_j^2}{L_j(a_j)^2}=\kappa(1+\kappa)-\frac{2{\vartheta}}{1+\rho}.
\end{aligned}
\]

Since $2v_j/(H_j+1)=L_j(a_j)-\tilde H_j-1$, \eqref{eq:overload-domain} gives, for every $\mu\ge j$,
\[
{\vartheta}\le\frac{L_j(a_j)-\tilde H_j-1}{2(L_j(a_j)-1)}=\frac{1-\kappa}{2}-\frac{\kappa}{2(L_j(a_j)-1)}\le\frac{1-\kappa}{2},
\]
with strict inequality when $\kappa>0$. We show that $\mathcal Z$ can only cross zero downwards as $\mu$ increases. Suppose that $\mathcal Z(\mu')=0$ for some $\mu'\ge j$. Then ${\vartheta}(\mu')=7\kappa(\mu')^2/2$, so $\kappa(\mu')>0$, and the displayed bound is strict at $\mu'$: $7\kappa(\mu')^2/2={\vartheta}(\mu')<(1-\kappa(\mu'))/2$, that is, $7\kappa(\mu')^2+\kappa(\mu')<1$. In particular $\kappa(\mu')<6/19$, because $x\mapsto7x^2+x$ is increasing on $[0,\infty)$ and $7(6/19)^2+6/19=366/361>1$. For every $\mu$, the derivative formulas give
\[
(H_j+1)\mathcal Z'=-\frac{2{\vartheta}(1-{\vartheta})\kappa(1+\rho)}{7\rho}
-2\kappa^2(1+\kappa)+\frac{4{\vartheta}\kappa}{1+\rho}.
\]
At $\mu'$, replacing ${\vartheta}(\mu')$ by $7\kappa(\mu')^2/2$ and collecting the three terms gives
\[
(H_j(\mu')+1)\mathcal Z'(\mu')=-\frac{7\kappa(\mu')^2}{2\rho(\mu')(1+\rho(\mu'))}\,
\mathcal P_{\rho(\mu')}(\kappa(\mu')),
\]
where, for $0<\rho<1$,
\[
\mathcal P_\rho(\kappa):=\kappa\left(\frac27-\kappa^2\right)(1+\rho)^2
+\frac47\rho(1+\rho)(1+\kappa)-4\kappa\rho.
\]
With $\rho$ held fixed, $\mathcal P_\rho''(\kappa)=-6\kappa(1+\rho)^2\le0$, so it suffices to check positivity of $\mathcal P_\rho$ at the endpoints of $[0,6/19]$. Both endpoint values are positive for $0<\rho<1$, the second after completing the square:
\[
\mathcal P_\rho(0)=\frac47\rho(1+\rho)>0,\qquad
\mathcal P_\rho(6/19)=\frac4{48013}
\left[9730\left(\rho-\frac14\right)^2+138\rho+\frac{775}{8}\right]>0.
\]
Thus, $\mathcal Z'(\mu')<0$ at every zero $\mu'$ of $\mathcal Z$.

To determine the sign of $\mathcal Z$, let $\mu\to\infty$ with $j$ fixed. The finite sum $A_j=\sum_{l=0}^j(j)_l\mu^{-l}$ gives $A_j=1+j/\mu+O(\mu^{-2})$, and $\Prb(I_j=l)=(j)_l\mu^{-l}/A_j$. The event $I_j=1$ contributes $j/\mu+O(\mu^{-2})$ to both $\E[I_j]$ and $\E[I_j^2]$; all values $I_j\ge2$ contribute only $O(\mu^{-2})$. Hence, $H_j={j/\mu}+O(\mu^{-2}), \E[I_j^2]={j/\mu}+O(\mu^{-2}), v_j={j/\mu}+O(\mu^{-2})$. Therefore, ${\vartheta}=j/\mu^2+O(\mu^{-3})$, $\tilde H_j=O(\mu^{-2})$, and $\kappa=O(\mu^{-2})$, so $\mathcal Z>0$ for all sufficiently large $\mu$. An earlier nonpositive value would require a subsequent upward crossing, contrary to the negative derivative at every zero. This proves $\mathcal Z>0$ on $[j,\infty)$. For $\kappa>0$, combining $7\kappa^2<2{\vartheta}$ with the bound ${\vartheta}<(1-\kappa)/2$ shown above again gives $\kappa<6/19<1/3$; the conclusion is immediate when $\kappa=0$. Recall that $\kappa=\tilde H_j/L_j(a_j)$ and $L_j(a_j)>0$. Hence, $\kappa<1/3$ is equivalent to $L_j(a_j)>3\tilde H_j$, completing~\eqref{eq:overload-deficit}. \Halmos

\subsection{The Penalty at the Intersection}
\begin{lemma}[{\sc Intersection Penalty}]\label{lem:overload-price}
In the notation above, for integers $j\ge1$ and $\mu\ge\max\{j,3\}$,
\begin{equation}\label{eq:overload-price}
a_j>\frac{L_j(a_j)}2+\frac{L_j(a_j)^2}{4\sigma_H},\qquad
\sigma_H:=1-\frac{5L_j(a_j)^2}{28\mu}>0.
\end{equation}
\end{lemma}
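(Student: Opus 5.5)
The plan is to remove $\mu$ and reduce both claims to an algebraic inequality in the dimensionless variables $\rho,u,{\vartheta}$ introduced above. Writing $L:=L_j(a_j)$, we have
\[
H_j+1=\frac1u,\qquad v_j=\frac{\rho}{u^2},\qquad L=\frac{1+\rho}{u},\qquad \mu=\frac{v_j}{{\vartheta}}=\frac{\rho}{u^2{\vartheta}}.
\]
Hence
\[
\frac{L^2}{\mu}=\frac{(1+\rho)^2{\vartheta}}{\rho},\qquad \sigma_H=1-\frac{5(1+\rho)^2{\vartheta}}{28\rho}.
\]
The penalty is $a_j(1-{\vartheta})=H_j+1+v_j=(u+\rho)/u^2$. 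Multiplying the target by $u^2$, the claim \eqref{eq:overload-price} becomes
\[
u+\rho>(1-{\vartheta})\left[\frac{u(1+\rho)}2+\frac{(1+\rho)^2}{4\sigma_H}\right],
\]
provided $\sigma_H>0$. The available constraints are these. First, $0<\rho<1$. Second, $\kappa=(1-\rho-u)/(1+\rho)\in[0,1/3)$, so $u\le1-\rho$. Third, ${\vartheta}\le\rho/(1+\rho-u)$, which follows from $\mu\ge(H_j+1)(L-1)$ in \eqref{eq:overload-domain}. Fourth, ${\vartheta}\le(1-\kappa)/2$ and ${\vartheta}\le1/2$. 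Fifth, Lemma~\ref{lem:overload-deficit} gives the lower bound ${\vartheta}>7\kappa^2/2$. The hypothesis $\mu\ge3$ enters through $\rho/(u^2{\vartheta})\ge3$, which bounds ${\vartheta}$ above by $\rho/(3u^2)$ when $u$ is large.

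The first step is $\sigma_H>0$, that is, $5(1+\rho)^2{\vartheta}<28\rho$. The bound ${\vartheta}\le\rho/(1+\rho-u)$ reduces this to $5(1+\rho)^2<28(1+\rho-u)$. Using $u\le1-\rho$, this holds comfortably unless $u$ is close to $1-\rho$ with $\rho$ small. In that corner $\kappa\approx0$, and I would combine the estimate with $\mu\ge3$ through ${\vartheta}\le\rho/(3u^2)$ to finish. I would record the resulting margin, since $\sigma_H$ should stay bounded away from zero, for use below.

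The second step is the main inequality. Multiplying through by $4\sigma_H>0$ turns it into a polynomial inequality, quadratic in ${\vartheta}$, with coefficients polynomial in $(\rho,u)$. Its right side is not monotone in ${\vartheta}$, because the factor $1-{\vartheta}$ decreases while $1/\sigma_H$ increases. So I would treat the difference as a quadratic in ${\vartheta}$ and check its sign on the feasible interval $7\kappa^2/2<{\vartheta}\le\min\{\rho/(1+\rho-u),(1-\kappa)/2,\rho/(3u^2)\}$.
\begin{itemize}
\item If the quadratic is concave in ${\vartheta}$, it suffices to check the two endpoints.
\item Otherwise I would locate the vertex and show that it falls outside the feasible interval, or bound the minimum directly.
\end{itemize}
At the upper endpoint ${\vartheta}=\rho/(1+\rho-u)$ the inequality becomes a two-variable polynomial inequality on the triangle $0<\rho<1$, $0<u\le1-\rho$. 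At the lower endpoint I would substitute ${\vartheta}=7\kappa^2/2$ with $\kappa=(1-\rho-u)/(1+\rho)$. Each of these I would verify by completing squares, in the style of the $\mathcal P_\rho$ estimates in Lemma~\ref{lem:overload-deficit}.

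I expect the main obstacle to be this endpoint verification near the corner $u\to1-\rho$ (that is, $\kappa\to0$) with small $\rho$. There the left side $u+\rho\approx1$ and the right side is roughly $(1-{\vartheta})(1+\rho)^2\bigl[\tfrac{1-\rho}{2(1+\rho)}+\tfrac1{4\sigma_H}\bigr]$, so the margin is thin. The constant $5/28$ in $\sigma_H$ was presumably tuned to this corner. If the crude upper bounds on ${\vartheta}$ are too weak there, I would fall back on the exact relation $\tilde H_j=2H_j+1-L$, on $L>3\tilde H_j$ from Lemma~\ref{lem:overload-deficit}, and, as in that lemma, on a crossing argument in $\mu$ for fixed $j$. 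In that argument the difference between the two sides of \eqref{eq:overload-price} is differentiated using the derivative formulas for $H_j$, $L$, $\tilde H_j$ and ${\vartheta}$, and one shows that it can only cross zero downward while being positive as $\mu\to\infty$.
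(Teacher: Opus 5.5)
Your reduction is sound and matches the paper's. In $(\rho,u,\vartheta)$ the claim is positivity of $\mathcal R(\vartheta)=4\sigma_H(u+\rho)-2\sigma_H(1-\vartheta)u(1+\rho)-(1-\vartheta)(1+\rho)^2$, and this quadratic is concave in $\vartheta$. Your constraint list contains every ingredient the paper uses, and your case analysis for $\sigma_H>0$ works; the paper just notes $L_j(a_j)^2\le2\mu+1$, hence $\sigma_H\ge7/12$.

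The gap is the sign verification, which is the whole content of the lemma, and the recipe you give for it is false as stated. Neither endpoint substitution is a valid inequality on the triangle $0<\rho<1$, $0<u\le1-\rho$, even after imposing $\kappa<1/3$ and $\sigma_H>0$:
\begin{itemize}
\item At $(\rho,u)=(1/2,1/100)$: $\kappa\approx0.327$, the active upper bound is $\vartheta=\rho/(1+\rho-u)\approx0.336$, $\sigma_H\approx0.730$, and $\mathcal R\approx-0.020$. The target would require $0.51>0.517$.
\item At $(\rho,u)=(1/10,0.57)$: $\kappa=0.3$, and $\vartheta=7\kappa^2/2=0.315$ gives $\sigma_H\approx0.32$ but $\mathcal R\approx-0.25$.
\end{itemize}
Both points are infeasible only because $7\kappa^2/2$ exceeds $\min\{\rho/(1+\rho-u),(1-\kappa)/2,\rho/(3u^2)\}$ there. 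So $\mathcal Z>0$ must cut down the $(\rho,u)$-domain itself, not merely supply an endpoint of the $\vartheta$-interval. On the resulting region, which is bounded by piecewise curves, you give no argument for either endpoint. Your diagnosis is also misplaced: near $\kappa=0$ the inequality holds with a wide margin (e.g.\ $\mathcal R\approx0.87$ at $\rho=0.01$, $u=0.99$). The delicate region is near $\mathcal Z=0$.

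The missing idea is to use $\mathcal Z>0$ as additive slack rather than as an endpoint. Expanding in $\vartheta$ gives the exact identity
\[
\mathcal R-(1+\rho)^2\mathcal Z=\frac{5u}{14\rho}\,\mathcal E(\vartheta),\qquad
\mathcal E(\vartheta)=\frac{14}{5}\rho u+\vartheta(1+\rho)\Bigl[\rho^2+\frac{28}{5}\rho-1-\vartheta(1+\rho)^2\Bigr].
\]
It therefore suffices to prove $\mathcal E>0$, and $\kappa$ no longer appears. Because $\mathcal E$ increases in $u$ and is concave in $\vartheta$, only two facts are needed:
\begin{itemize}
\item $u>u_0:=(2-4\rho)/3$, which is exactly $L_j(a_j)>3\tilde H_j$;
\item an upper bound on $\vartheta$ depending on $\rho$ alone, $\vartheta<\bar\vartheta(\rho):=7\rho/(2(\rho+2))$.
\end{itemize}
The paper gets the second bound by averaging $1/\vartheta\ge2$ with $1/\vartheta>2/(3\rho)$. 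The latter is $\mu>\frac23(H_j+1)^2$. It follows from a $\frac35$--$\frac25$ convex combination of $\mu>\frac32(H_j+1)^2-\frac74(H_j+1)$ (the domain bound together with $L_j(a_j)>3\tilde H_j$) and $\mu\ge3$; this is where $\mu\ge3$ enters the main inequality.

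Then $\mathcal E(0)>0$. At $u=u_0$, $\mathcal E(\bar\vartheta(\rho))=\frac{7\rho}{60(\rho+2)^2}(4+77\rho+107\rho^2-89\rho^3-75\rho^4)$, a one-variable quartic that is positive on $(0,1)$. The same identity shows your two endpoint values are in fact positive on the true feasible region, so your plan is not hopeless. As written, though, it omits the decisive step, and the fallback crossing argument in $\mu$ is neither worked out nor needed.
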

\proof{Proof.} The term $L_j(a_j)^2/(4\sigma_H)$ will equal the quadratic term $\widehat{\eta}_j^2/2$ in the upper bound on $D_\mu(\widehat{\eta}_j)$ for the weight chosen in Section~\ref{app:choosing-rank-weight} when $L_j(a_j)>\sqrt2$. We first record bounds that make its denominator positive and reduce the remaining comparison to a concave quadratic.

Since $L_j(a_j)-3\tilde H_j=4v_j/(H_j+1)-2(H_j+1)+3>0$, we have $v_j>(H_j+1)^2/2-3(H_j+1)/4$. Combining this with~\eqref{eq:overload-domain} gives $\mu>3(H_j+1)^2/2-7(H_j+1)/4$. Combining this quadratic bound with $\mu\ge3$ gives
\[
\mu>\frac35\left(\frac32(H_j+1)^2-\frac74(H_j+1)\right)+\frac25\,3
=\frac23(H_j+1)^2+\frac7{30}\left(H_j+1-\frac94\right)^2+\frac3{160}
>\frac23(H_j+1)^2.
\]
The quadratic bound alone need not imply $\mu>2(H_j+1)^2/3$ when $H_j+1$ is small; combining it with $\mu\ge3$ proves the inequality for these small values. This is where the assumption $\mu\ge3$ first enters. Consequently $\mu>2(H_j+1)^2/3$, and ${\vartheta}=v_j/\mu<3v_j/(2(H_j+1)^2)=3\rho/2$. Also, \eqref{eq:overload-domain} gives $\mu\ge H_j(H_j+1)+v_j\ge2v_j$, so $\vartheta=v_j/\mu\le1/2$ and hence $1/\vartheta\ge2$. Averaging this with $1/\vartheta>2/(3\rho)$ gives a single upper bound depending only on $\rho$:
\[
\frac1{\vartheta}>\frac67\frac2{3\rho}+\frac17\,2
=\frac{2(\rho+2)}{7\rho},\qquad
{\vartheta}<\overline{\vartheta}(\rho):=\frac{7\rho}{2(\rho+2)},\qquad \rho=\frac{v_j}{(H_j+1)^2}.
\]
Also $L_j(a_j)=2H_j+1-\tilde H_j\le2H_j+1$, so $H_j+1\ge(L_j(a_j)+1)/2$ and \eqref{eq:overload-domain} implies $\mu\ge(L_j(a_j)^2-1)/2$. Thus, since $\mu\ge3$,
\begin{equation}\label{eq:overload-scale-bounds}
\frac{L_j(a_j)^2}\mu\le2+\frac1\mu\le\frac73,
\qquad \frac7{12}\le\sigma_H<1.
\end{equation}

Here is the algebra linking the moment inequality to the penalty inequality. In the variables $\rho,u,{\vartheta}$, ${a_j/(H_j+1)^2}={(u+\rho)/(1-{\vartheta})}, {L_j(a_j)/(H_j+1)}=1+\rho, \sigma_H=1-{5{\vartheta}(1+\rho)^2/(28\rho)}$. To prove Lemma~\ref{lem:overload-price}, we subtract its proposed lower bound from $a_j$ and multiply by the positive factor $4\sigma_H(1-\vartheta)/(H_j+1)^2$:
\[
\begin{aligned}
\mathcal R
&:=\frac{4\sigma_H(1-{\vartheta})}{(H_j+1)^2}
\left(a_j-\frac{L_j(a_j)}2-\frac{L_j(a_j)^2}{4\sigma_H}\right)\\
&=4\sigma_H(u+\rho)-2\sigma_H(1-{\vartheta})u(1+\rho)
  -(1-{\vartheta})(1+\rho)^2.
\end{aligned}
\]
Recall that $\mathcal Z=2\vartheta/7-\kappa^2$ and $\kappa=(1-\rho-u)/(1+\rho)$. Multiplying the definition of $\mathcal Z$ by $(1+\rho)^2$ and substituting for $\kappa$ gives $(1+\rho)^2\mathcal Z ={2\vartheta(1+\rho)^2/7}-(1+\rho)^2\kappa^2 ={2\vartheta(1+\rho)^2/7}-(1-\rho-u)^2$. We subtract this positive quantity from $\mathcal R$. The terms independent of ${\vartheta}$ reduce to $u^2$, using $4\rho-(1+\rho)^2=-(1-\rho)^2$; the remaining terms have the common factor displayed here:
\[
\begin{aligned}
\mathcal R-(1+\rho)^2\mathcal Z
&=u^2+\frac{5u{\vartheta}(1+\rho)}{14\rho}
\left[\rho^2+\frac{28}{5}\rho-1-{\vartheta}(1+\rho)^2\right]=\frac{5u}{14\rho}\mathcal E({\vartheta}),
\end{aligned}
\]
where $\mathcal E({\vartheta}):={(14/5)}\rho u+{\vartheta}(1+\rho) \left[\rho^2+{(28/5)}\rho-1-{\vartheta}(1+\rho)^2\right]$. Since $\mathcal Z>0$ and $5u/(14\rho)>0$, proving $\mathcal E(\vartheta)>0$ will establish $\mathcal R>0$. For fixed $\rho,u$, $\mathcal E''({\vartheta})=-2(1+\rho)^3<0$, so positivity on $[0,\overline{\vartheta}(\rho)]$ follows from its two endpoint values. At zero, $\mathcal E(0)=14\rho u/5>0$. For the other endpoint, dividing $L_j(a_j)>3\tilde H_j$ by $H_j+1$ gives $1+\rho>3(1-\rho-u)$, that is, $u>u_0:=(2-4\rho)/3$. Because $\mathcal E$ is increasing in $u$,
\[
\begin{aligned}
\mathcal E(\overline{\vartheta}(\rho))
&>\frac{14}{5}\rho u_0+
\frac{7\rho(1+\rho)}{2(\rho+2)}
\left[\rho^2+\frac{28}{5}\rho-1
-\frac{7\rho(1+\rho)^2}{2(\rho+2)}\right]\\
&=\frac{7\rho}{60(\rho+2)^2}
\left(4+77\rho+107\rho^2-89\rho^3-75\rho^4\right)>0.
\end{aligned}
\]
The last sign follows from the exact positive decomposition $4+77\rho+107\rho^2-89\rho^3-75\rho^4 =4+20\rho+57\rho(1-\rho) +\rho^2(1-\rho)(164+75\rho)>0$, valid for $0<\rho<1$. This endpoint evaluation is purely algebraic; $u_0$ need not be positive. Thus, $\mathcal E({\vartheta})>0$, then $\mathcal R>0$, which proves~\eqref{eq:overload-price}. \Halmos

\subsection{Choosing the Rank Weight}\label{app:choosing-rank-weight}
\proof{Proof of Lemma~\ref{lem:endpoints} part~(ii).} Write $g_j:=L_j(a_j)/\sqrt2$. Recall from \eqref{eq:concave-lower-bound} that $\mathcal L_\mu(a_j)$ is the supremum of $\min\{B_\mu(\eta),a_j-D_\mu(\eta)\}$ over all $\eta\ge1$. Thus, it suffices to choose a weight $\widehat{\eta}_j\ge1$ for which $B_\mu(\widehat{\eta}_j)\ge g_j$ and $a_j-D_\mu(\widehat{\eta}_j)\ge g_j$, giving $\mathcal L_\mu(a_j)\ge g_j$. Since $L_j(a_j)=L_{j+1}(a_j)$ by Appendix~\ref{app:erlang-intersections}, this gives $L_j(a_j)=L_{j+1}(a_j)\le\sqrt2\,\mathcal L_\mu(a_j)$, which is the lemma's claim.

We first handle $g_j\le1$, for which we choose $\widehat{\eta}_j:=1$. Then $B_\mu(1)=1$ and $D_\mu(1)=0$. Moreover, $a_j=(H_j+1+v_j)/(1-\vartheta)$, with $H_j>0$, $v_j>0$, and $0<\vartheta\le1/2$, so $a_j >H_j+1+v_j >H_j+1+{v_j/(H_j+1)} =L_j(a_j)$. This comparison verifies the second required inequality for $\widehat{\eta}_j=1$: since $D_\mu(1)=0$ and $L_j(a_j)=\sqrt2\,g_j$, $a_j-D_\mu(1)=a_j>L_j(a_j)=\sqrt2\,g_j>g_j$. Together with $B_\mu(1)=1\ge g_j$, it gives $\mathcal L_\mu(a_j) \ge\min\{B_\mu(1),a_j-D_\mu(1)\} =\min\{1,a_j\}\ge g_j$, which proves the lemma's claim for $g_j\le1$.

Suppose now that $g_j>1$. The lower bound on $G(s)$ in~\eqref{eq:overload-global-profile} will give the rank bound $B_\mu(\eta)>\eta/\sqrt{1+5\eta^2/(14\mu)}$ in the range used below. We choose $\widehat{\eta}_j$ to make its right-hand side equal to $g_j$, which requires $\widehat{\eta}_j^2\left(1-{5g_j^2/(14\mu)}\right)=g_j^2$. Substituting $g_j^2=L_j(a_j)^2/2$ into the equation for $\widehat{\eta}_j$ gives $1-5g_j^2/(14\mu)=1-5L_j(a_j)^2/(28\mu)=\sigma_H$, where the last equality uses the definition of $\sigma_H$ in~\eqref{eq:overload-price}. We have already established $\sigma_H>0$, and its definition gives $\sigma_H<1$. Thus, the chosen weight is well defined and satisfies $\widehat{\eta}_j:={g_j/\sqrt{\sigma_H}}>g_j>1$. The bounds in~\eqref{eq:overload-scale-bounds} give ${\widehat{\eta}_j^2/\mu}={L_j(a_j)^2/(2\mu\sigma_H)}\le\frac{7/3}{2(7/12)}=2$. For $0<x\le2$, $169\left(1+{5x/14}\right)-(13+2x)^2 ={x(117-56x)/14}>0$. Taking $x=\widehat{\eta}_j^2/\mu$ and applying~\eqref{eq:poisson-gaussian-rank} and~\eqref{eq:overload-global-profile} gives
\[
B_\mu(\widehat{\eta}_j)>\sqrt\mu\,G\!\left(\frac{\widehat{\eta}_j}{\sqrt\mu}\right)
>\frac{13\widehat{\eta}_j}{13+2\widehat{\eta}_j^2/\mu}
>\frac{\widehat{\eta}_j}{\sqrt{1+5\widehat{\eta}_j^2/(14\mu)}}=g_j.
\]
The last equality is precisely the equation defining $\widehat{\eta}_j$. For the stockout requirement, the schedule estimate \eqref{eq:simple-penalty}, $\widehat{\eta}_j\ge g_j$, and $\sqrt2/3<1/2$ give
\[
\begin{aligned}
g_j+D_\mu(\widehat{\eta}_j)
&\le g_j+\frac{\widehat{\eta}_j^2}{2}-\frac{\widehat{\eta}_j}{3}
\le\frac23g_j+\frac{\widehat{\eta}_j^2}{2}=\frac{\sqrt2}{3}L_j(a_j)+\frac{L_j(a_j)^2}{4\sigma_H}
<\frac{L_j(a_j)}2+\frac{L_j(a_j)^2}{4\sigma_H}<a_j.
\end{aligned}
\]
The last inequality is Lemma~\ref{lem:overload-price}. Therefore, $\min\{B_\mu(\widehat{\eta}_j),a_j-D_\mu(\widehat{\eta}_j)\}\ge L_j(a_j)/\sqrt2$ for every integer $j\ge1$ and $\mu\ge\max\{j,3\}$. \Halmos

\section{Tightness of the Base-Stock Guarantee}\label{app:sharpness-proof}
This appendix provides the feasible-policy upper bound and the uniform base-stock lower bound used in Section~\ref{sec:article-sharpness}.

We first bound the unrestricted optimum using equally spaced unit orders. Fix $0<\ell<1$, define the normalized order rate $\lambda_{\rm ord}:=1-\ell$, and order one unit every $1/\lambda_{\rm ord}$ time units from an empty system. This open-loop policy fixes all order times in advance and is admissible. All processes in the following calculation refer to this policy. The lead time only delays the first delivery. Measure time from that delivery, and let $\varsigma(t)\in[0,1)$ be the elapsed fraction of the current delivery interval. Recall that $X(t)$ is the on-hand inventory at time $t$. The process $Z(t):=X(t)+\varsigma(t)-1$ starts at zero and has no delivery jump. At each subsequent delivery time $t$, $\varsigma$ approaches one from below and resets to zero: its left-hand limit is $\varsigma(t-)=1$, while $\varsigma(t)=0$. Thus, its jump is $-1$, offsetting the unit increase in $X$. Each delivery interval has length $1/\lambda_{\rm ord}$, so its elapsed fraction increases at rate $\dot\varsigma(t)=\lambda_{\rm ord}$ between deliveries. Between demand and delivery events, $X$ is constant, so $Z=X+\varsigma-1$ increases at the same rate. Each served demand decreases $Z$ by one.

The conditional drift of $Z^2$ is its expected change per unit time, given the current state. Between events, the increase of $Z$ contributes $2\lambda_{\rm ord} Z$ to this rate. A served demand changes $Z^2$ by $(Z-1)^2-Z^2=-2Z+1$ and occurs at rate one when $X>0$. Deliveries leave $Z^2$ unchanged. Adding these contributions and using $Z=\varsigma-1\le0$ when $X=0$ gives
\[
 2\lambda_{\rm ord} Z+(-2Z+1)\mathbf1_{\{X>0\}}
 =-2\ell Z+\mathbf1_{\{X>0\}}+2Z\mathbf1_{\{X=0\}}
 \le-2\ell Z+\mathbf1_{\{X>0\}}.
\]
The term $-2\ell Z$ comes from the chosen order rate $\lambda_{\rm ord}=1-\ell<1$: while inventory is positive, $Z$ decreases in expectation at rate $\ell$.

Write $D(T)$ for the total number of units delivered by time $T$, including the unit delivered at $t=0$, if any. Recall that $\mathcal A(T)$ counts the demands served by time $T$. Then $D(T)=\lambda_{\rm ord} T+O(1)$ and pathwise flow balance gives $\mathcal A(T)=D(T)-X(T)\le D(T)$. For each finite $T$, the demand integrand $(-2Z(t-)+1)\mathbf1_{\{X(t-)>0\}}$ is bounded and predictable, since $X(t)\le D(T)$ on $[0,T]$.

Sum the continuous changes and demand jumps of $Z^2$ over $[0,T]$ and take expectations. Lemma~\ref{lem:poisson-compensation} converts the expected sum of demand jumps into a time integral, giving the first equality below; the drift inequality above gives the integrated drift inequality:
\[
\begin{aligned}
\E Z(T)^2-\E Z(0)^2
&=\E\int_0^T\bigl[2\lambda_{\rm ord}Z(t)+(-2Z(t)+1)\mathbf1_{\{X(t)>0\}}\bigr]\,dt\\
&\le-2\ell\,\E\int_0^T Z(t)\,dt+\E\int_0^T\mathbf1_{\{X(t)>0\}}\,dt=-2\ell\,\E\int_0^T Z(t)\,dt+\E\,\mathcal A(T).
\end{aligned}
\]
The last equality applies Lemma~\ref{lem:poisson-compensation} to the indicator $\mathbf1_{\{X(t-)>0\}}$, whose sum over demand epochs counts the served demands. Substituting $Z(0)=0$, rearranging, and using $\E Z(T)^2\ge0$ and $\mathcal A(T)\le D(T)=\lambda_{\rm ord}T+O(1)$ gives $2\ell\,\E\int_0^T Z(t)\,dt \le\E\,\mathcal A(T)-\E Z(T)^2\le \lambda_{\rm ord} T+O(1)$. Between consecutive deliveries, $1-\varsigma(t)$ decreases linearly from one toward zero, with average $1/2$ over each complete interval. Thus, $T^{-1}\int_0^T(1-\varsigma(t))\,dt\to1/2$. Combining this limit with $X=Z+1-\varsigma$ and the preceding bound gives, using $\lambda_{\rm ord}+\ell=1$ in the last step,
\[
\limsup_{T\to\infty}\frac1T\E\int_0^T X(t)\,dt\le\frac{\lambda_{\rm ord}}{2\ell}+\frac12=\frac1{2\ell}.
\]
To identify the loss rate, observe that $Z(t)=X(t)+\varsigma(t)-1\ge-1$ because $X(t)$ and $\varsigma(t)$ are nonnegative. Also, served demands cannot exceed total demands, so $\E\,\mathcal A(T)\le\E N(T)=T$. Substituting these bounds into the integrated drift inequality, with $Z(0)=0$, gives $\E Z(T)^2\le-2\ell\,\E\int_0^T Z(t)\,dt+\E\,\mathcal A(T) \le2\ell T+T=(2\ell+1)T$. Since $X(T)=Z(T)+1-\varsigma(T)\le Z(T)+1$, taking expectations and applying $(\E Z(T))^2\le\E[Z(T)^2]$, gives
\[
\E X(T)\le1+\E Z(T)\le1+\sqrt{\E[Z(T)^2]}\le1+\sqrt{(2\ell+1)T}=O(\sqrt T).
\]
Here $\ell$ is fixed, so the bound grows at most proportionally to $\sqrt T$ and implies $\E X(T)/T\to0$. Consequently, flow balance yields $\E\,\mathcal A(T)/T=D(T)/T-\E X(T)/T\to\lambda_{\rm ord}$ and $\E\,\mathcal U(T)/T=1-\E\,\mathcal A(T)/T\to\ell$. Before the first delivery, the empty system incurs only a finite expected loss cost, which does not change either average.

Thus, for every $0<\ell<1$, this equally spaced ordering policy has average cost at most $1/(2\ell)+a\ell$. For each $a>1/2$, choose $\ell=\ell_a:=1/\sqrt{2a}$, which lies in $(0,1)$. The optimal system cost is at most the cost of this particular admissible policy, so
\begin{equation}\label{eq:article-cycle-bound}
 \operatorname{OPT}_0(\mu,a)\le\frac1{2\ell_a}+a\ell_a=\sqrt{2a}
 \qquad(a>1/2).
\end{equation}

For the reverse comparison, the Erlang identity and the variance bound give $e_j(H_j+1)=1-v_j/\mu, v_j\le H_j(H_j+1), e_j\ge{1/(H_j+1)}-{H_j/\mu}$. If $0<a<\mu$ and $\omega:=1-a/\mu$, the formula $L_j(a)=H_j+ae_j$ and the arithmetic--geometric mean inequality therefore give, for every integer base-stock level $j$,
\[
 L_j(a)\ge\omega H_j+\frac a{H_j+1}
 =\omega(H_j+1)+\frac a{H_j+1}-\omega
 \ge2\sqrt{a\omega}-\omega.
\]

Take $\mu=a^2$ and let $a\to\infty$. The lower bound is uniform in $j$, so \eqref{eq:article-cycle-bound} implies
\[
 \frac{U_{a^2}(a)}{\operatorname{OPT}_0(a^2,a)}
 \ge\frac{2\sqrt{a-1}-1+a^{-1}}{\sqrt{2a}}
 \longrightarrow\sqrt2.
\]
Together with the zero-setup upper bound, this proves \eqref{eq:article-sharp} along a single sequence of finite parameters.

The lower endpoint also applies to the algorithm with positive setup cost. Fix $k=1$, so $Q_k=1$ and $b_k=1$, and take $\mu=a^2$. The zero-setup guarantee proved in Section~\ref{sec:interval-coverage} and the feasible-policy bound~\eqref{eq:article-cycle-bound} give $U_{a^2}(a)\le\sqrt2\operatorname{OPT}_0(a^2,a)\le2\sqrt a$. The selection rule in Corollary~\ref{cor:policy-selection} compares the replenishment cost upper bound $U_{a^2}(a)+1$ with the no-replenishment cost $a$. Since $U_{a^2}(a)+1\le2\sqrt a+1<a$ for $a>3+2\sqrt2$, it selects replenishment for all sufficiently large $a$. Base stock zero carries no inventory and loses every demand, so its normalized cost is $L_0(a)=a$. For $a>3+2\sqrt2$, the preceding inequality $U_{a^2}(a)+1<a$ implies $U_{a^2}(a)<L_0(a)$, so a cost-minimizing base-stock level must satisfy $S^\star\ge1$. Because $Q_k=1$, the selected policy is base stock. The candidate's full cost is at least $U_{a^2}(a)$ because setup cost is nonnegative. The evenly spaced unit-order policy with loss fraction $\ell_a$ incurs setup cost at rate $1-\ell_a\le1$, so $\operatorname{OPT}_1(a^2,a)\le\sqrt{2a}+1$. Consequently,
\[
\frac{C_{a^2,a,1}(\pi_{\rm heuristic})}{\operatorname{OPT}_1(a^2,a)}
\ge\frac{2\sqrt{a-1}-1+a^{-1}}{\sqrt{2a}+1}
\longrightarrow\sqrt2.
\]
This statement concerns the policy selected by our algorithm; the optimized $(r,Q)$ family may select $Q>1$.

\section{Finite-Horizon Bounds for Batching}\label{app:batch-proofs}
The proofs of the bounds~\eqref{eq:batch-credit-main} and~\eqref{eq:batch-upper-main} in Section~\ref{sec:batching} use bounded functions of inventory to relate changes at sales, orders, and deliveries to accumulated costs. The first bounds holding and setup costs in terms of the expected number of units served under every admissible policy. The second gives a cost identity for the constructed $(r,Q)$ policy and compares its sales with base stock.

\subsection{A Holding and Setup Cost Bound for Every Policy}\label{app:batch-lower}
\begin{lemma}[{\sc Service Credit}]\label{lem:batch-service-credit}
For $0\le\theta\le1$, write $b:=\sqrt{2k\theta}$. Every empty-start admissible policy satisfies
\begin{equation}\label{eq:batch-service-credit}
 \theta\,\E\int_0^T X(t)\,dt+k\E O(T)\ge b\,\E\,\mathcal A(T).
\end{equation}
\end{lemma}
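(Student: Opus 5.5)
The plan is a potential-function argument in the spirit of Proposition~\ref{prop:rank-lower}. I would use a quadratic function of on-hand inventory whose changes at sales convert holding area into a per-unit charge. Deliveries are then charged the setup cost, and the EOQ inequality makes every delivery pay for itself. The starting point is Lemma~\ref{lem:poisson-compensation} applied to the bounded predictable integrand $X(t-)\mathbf 1_{\{X(t-)>0\}}$, truncated at a level $M$ exactly as in Appendix~\ref{app:rank-potential} so that it is bounded. This gives
\[
\E\int_0^T X(t)\,dt=\E\sum_{\text{sales }t\le T}X(t-),
\]
so the holding area equals the expected sum, over served units, of the stock level just before each sale.

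Next I would introduce $\Psi(x):=\theta x(x-1)/2-bx$. A sale from level $x$ changes $\Psi$ by $-\theta(x-1)+b$. Hence
\[
\theta X(t-)-b=\theta-\Delta\Psi
\]
at every sale, and summing over sales gives
\[
\theta\sum X(t-)-b\,\mathcal A(T)=\theta\,\mathcal A(T)-\bigl[\Psi(X(T))-\Psi(0)\bigr]+\sum_{\text{deliveries}}\Delta\Psi.
\]
Orders themselves do not change $X$, so each positive order of $q$ units is matched with its delivery of $q$ units, arriving at some level $x$. That delivery raises $\Psi$ by $\theta[qx+q(q-1)/2]-bq\ge\theta q(q-1)/2-bq$. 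I would also split $\theta\,\mathcal A(T)$ into $\theta$ per served unit and assign that credit to the batch that delivered the unit. A batch whose $q$ units are all served by $T$ then carries a net charge of at least
\[
k+\theta\frac{q(q+1)}2-bq\ \ge\ k+\frac{\theta q^2}2-bq\ \ge\ 0,
\]
where the last step is the AM--GM inequality with $b=\sqrt{2k\theta}$, i.e.\ $\theta q^2/2+k\ge\sqrt{2k\theta}\,q$. This is where the EOQ scale enters.

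The main obstacle is the terminal and partial-batch bookkeeping. The term $-\Psi(X(T))$ is unbounded below when much stock remains at $T$, and units still in the pipeline at $T$ have been charged $k$ but earn no service credit. The telescoped identity charges everything to sales, while the holding area of unsold units is still real and is simply not being used. So I would not telescope the holding of unsold units through sales at all. Instead I would work per batch. For a batch of $q$ units of which $s\le q$ are served by $T$, its units are served in FIFO order after any earlier stock, so the $i$th served unit of the batch has waited while at least $s-i+1$ of the batch's units were on hand (counting the unsold remainder as still present). Hence the batch's holding area on $[0,T]$ is at least $s(s+1)/2$ in expectation via the compensation identity. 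The batch inequality then becomes
\[
k+\frac{\theta s^2}2\ge bs,
\]
with only $s$ credited; this is the same AM--GM bound and needs no terminal term. Summing over batches, together with the truncation limit $M\to\infty$ by monotone convergence, then yields \eqref{eq:batch-service-credit}. The empty start guarantees that every served unit belongs to some batch.
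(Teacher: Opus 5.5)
Your final per-batch argument is correct, but it follows a different route from the paper's.

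\textbf{How the paper resolves your obstacle.} The paper keeps a potential function and caps it so that it stays bounded. Up to an index shift, your quadratic is $\Psi(x)=-\sum_{i=1}^x\bigl(b-\theta(i-1)\bigr)$. The paper instead uses $\mathcal F(n)=-\sum_{i=1}^n(b-\theta i)^+$, that is, each per-unit credit is replaced by its positive part. Because $\sum_{i\ge1}(b-\theta i)^+\le b^2/(2\theta)=k$, this potential lies in $[-k,0]$. Three facts then finish the proof with one telescoping step and Lemma~\ref{lem:poisson-compensation}:
\begin{enumerate}[(i)]
\item a sale from level $n$ still satisfies $\theta n+\mathcal F(n-1)-\mathcal F(n)=\theta n+(b-\theta n)^+\ge b$;
\item every delivery, of any size and at any level, lowers $\mathcal F$ by at most $k$;
\item the terminal term satisfies $\mathcal F(X(T))\le0$, which is the favorable sign.
\end{enumerate}

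\textbf{How your route differs.} You abandon the potential. After converting $\E\int_0^TX\,dt$ into $\E\sum_{\text{sales}\le T}X(t-)$, you bound the pre-sale stocks pathwise by the triangular number $s_\beta(s_\beta+1)/2$ of each batch and apply AM--GM batch by batch. This is valid: with no disposal, batch $\beta$ has $q_\beta-i+1\ge s_\beta-i+1$ units on hand at its $i$th sale, whatever the service order.

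\textbf{What each buys.} Your route is more elementary and makes the EOQ tradeoff explicit. It involves only nonnegative terms, so the paper's integrability remarks about $\E O(T)$ and the delivery jumps become unnecessary. It also gives the integer refinement with $\beta_k$ directly, since $(s+1)/2+k/s\ge\beta_k$. The paper's route never assigns units to orders. It also fits the same potential-function template used for Proposition~\ref{prop:rank-lower} and for the batching upper bound.

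\textbf{Presentation.} Drop the $\Psi$ detour. State the key inequality $\theta\sum_{\text{sales}\le T}X(t-)+kO(T)\ge b\,\mathcal A(T)$ pathwise, noting that every batch with $s_\beta\ge1$ was ordered by time $T$. Take expectations only at the end. This is cleaner than claiming the bound ``per batch in expectation'' and then summing expectations over a random family of batches.
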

\proof{Proof.} The case $\theta=0$ is immediate. For $\theta>0$, define the following function of on-hand inventory $n$: $\mathcal F(n):=-\sum_{i=1}^n(b-\theta i)^+$. The decreasing summands satisfy
\[
 \sum_{i\ge1}(b-\theta i)^+
 \le\int_0^\infty(b-\theta x)^+\,dx=\frac{b^2}{2\theta}=k
 \quad\Longrightarrow\quad -k\le\mathcal F\le0.
\]
The summand $(b-\theta i)^+$ is the amount by which $b$ exceeds allocated holding cost $\theta i$, or zero if $\theta i\ge b$. Accordingly, at a served demand with pre-demand stock $n\ge1$,
\[
 \theta n+\mathcal F(n-1)-\mathcal F(n)=\theta n+(b-\theta n)^+\ge b
 \quad\Longrightarrow\quad \mathcal F(n-1)-\mathcal F(n)\ge b-\theta n.
\]
Let $\Delta_{\rm del}\mathcal F$ denote the jump at a received order. As shown above, the values of $\mathcal F$ before and after a delivery both lie in $[-k,0]$, so a delivery can reduce $\mathcal F$ by at most $k$: $\Delta_{\rm del}\mathcal F\ge-k$. An empty-start system receives at most $O(T)$ orders by time $T$, so
\[
 \sum_{\rm deliveries\le T}\Delta_{\rm del}\mathcal F\ge-kO(T)
 \quad\Longrightarrow\quad
 \E\sum_{\rm deliveries\le T}\Delta_{\rm del}\mathcal F\ge-k\E O(T).
\]
Take the demand increment $\mathcal F(n-1)-\mathcal F(n)$ to be zero when $n=0$. This integrand and the service indicator are bounded and predictable, so telescoping and Lemma~\ref{lem:poisson-compensation} give
\[
 \begin{split}
 \E \mathcal F(X(T))
 &=\E\int_0^T
    \{\mathcal F(X(t-)-1)-\mathcal F(X(t-))\}\mathbf1_{\{X(t-)>0\}}\,dt
      +\E\sum_{\rm deliveries\le T}\Delta_{\rm del}\mathcal F\\
 &\ge b\,\E\,\mathcal A(T)-\theta\,\E\int_0^T X(t)\,dt-k\E O(T).
 \end{split}
\]
Since $\mathcal F(X(T))\le0$, this proves \eqref{eq:batch-service-credit}. For $k>0$, our admissibility conditions require finite expected setup cost over every finite time horizon, so $k\E O(T)<\infty$ and hence $\E O(T)<\infty$ for each finite $T$. Since each delivery changes $\mathcal F$ by at most $k$ in absolute value, the expected sum of delivery jumps is finite. For $k=0$, $b=0$ and $\mathcal F=0$, so the claim is immediate. \Halmos

For $\theta<1$ and $a\ge b$, adding the remaining costs and using $\E\,\mathcal A(T)+\E\,\mathcal U(T)=T$ yields
\[
 \E\left[\int_0^T X(t)\,dt+a\,\mathcal U(T)+kO(T)\right]
 \ge bT+(1-\theta)\E\left[
       \int_0^T X(t)\,dt+\frac{a-b}{1-\theta}\,\mathcal U(T)\right].
\]
Dividing by $T$, taking the limsup with the holding and lost-sales terms kept together, and then taking the infimum over policies proves
\begin{equation}\label{eq:operating-cost-transform}
 \operatorname{OPT}_k(\mu,a)
 \ge b+(1-\theta)\operatorname{OPT}_0
       \left(\mu,\frac{a-b}{1-\theta}\right)
 \quad(a\ge b).
\end{equation}
If $a<b$, the same inequality before rescaling and $\E\,\mathcal U(T)\le T$ give the lower bound $a$.

At $\theta=1$, $b=\sqrt{2k}$ and \eqref{eq:batch-service-credit} bounds all holding and setup costs below by $b\,\E\,\mathcal A(T)$. Adding the lost-sales cost gives
\[
\begin{aligned}
\E\left[\int_0^T X(t)\,dt+a\,\mathcal U(T)+kO(T)\right]
&\ge b\,\E\,\mathcal A(T)+a\,\E\,\mathcal U(T)\\
&\ge\min\{a,b\}\bigl(\E\,\mathcal A(T)+\E\,\mathcal U(T)\bigr)
=\min\{a,b\}T.
\end{aligned}
\]
The last equality uses that every demand is either served or lost and expected total demand is $T$. Dividing by $T$ and taking the limsup shows that every empty-start policy has average cost at least $\min\{a,b\}$. The initial-supply argument extends these bounds to all admissible policies, giving $\operatorname{OPT}_k(\mu,a)\ge\min\{a,b\}$ at $\theta=1$. We take the limsup of the total expected cost per unit time. The argument remains valid even if the expected holding, lost-sales, and setup costs per unit time do not converge separately.

\subsection{The Classical-Policy Cost Upper Bound}\label{app:batch-upper}
Fix an integer base-stock level $S\ge1$ and an integer lot size $Q\ge1$. Retain the reorder point $r=S-1$ and use the policy $\pi_{\rm rQ}(S-1,Q)$, initialized with $S+Q-1$ on-hand units. Let $Y$ denote inventory position and write $b_Q:={(Q-1)/2}+{k/Q}, d_Q:={k/Q}-{(Q-1)/2}, \gamma:=S-\mu+b_Q$. For each pending unit $i$, let $\tau_i(t)$ be its remaining delivery time. Define the total remaining delivery time $\mathcal W(t)$ and the function $\Psi(t)$ by
\[
 \mathcal W(t):=\sum_{i\ \mathrm{pending}}\tau_i(t),\qquad
 \Psi(t):=\frac{Y(t)(Y(t)+1)}2-\gamma Y(t)-\mathcal W(t).
\]
Adding the term $-\mathcal W(t)$ to the quadratic part of $\Psi(t)$ converts the holding-cost rate from $X(t)$ to $Y(t)$ in the calculation: between events, the rate of change of the added term is $Y(t)-X(t)$. The quadratic part $Y(t)(Y(t)+1)/2-\gamma Y(t)$ is chosen so that, while inventory is positive, holding cost plus the expected rate of change in $\Psi$ from sales and approaching deliveries equals $\gamma$. We choose $\gamma$ so that the change in $\Psi$ when an order is placed offsets its setup cost.

As pending deliveries approach, $\mathcal W'=-(Y-X)$, so holding cost plus the rate of change of $-\mathcal W$ is $Y$. At a served demand, $Y$ falls to $Y-1$ and $\mathcal W$ is unchanged. Thus, the change in $\Psi$ caused by the sale is
\[
 \left[\frac{(Y-1)Y}{2}-\gamma(Y-1)-\mathcal W\right]
 -\left[\frac{Y(Y+1)}{2}-\gamma Y-\mathcal W\right]
 =\gamma-Y.
\]
While $X>0$, every demand is served and demand has rate one, so the conditional expected change in $\Psi$ from sales per unit time is $1\cdot(\gamma-Y)=\gamma-Y$. Any order triggered by a sale contributes a separate change in $\Psi$. An order takes position from $S-1$ to $S+Q-1$ and increases the total remaining delivery time by $\mu Q$, so its setup cost plus the change in $\Psi$ is $k+Q\{S+(Q-1)/2-\gamma-\mu\}=0$. Deliveries do not change $Y$ and the arriving units have zero remaining delivery time, so deliveries cause no jump in $\Psi$. Define the expected full cost through $T$ by $J_T:=\E\left[\int_0^T X(t)\,dt+a\,\mathcal U(T)+kO(T)\right]$. Adding the expected change in $\Psi$ to $J_T$ cancels each setup charge with the jump in $\Psi$ at that order, as shown above. When $X>0$, holding cost has rate $X$, approaching deliveries increase $\Psi$ at rate $Y-X$, and rate-one sales contribute $\gamma-Y$, giving $X+(Y-X)+(\gamma-Y)=\gamma$. When $X=0$, lost demands leave $\Psi$ unchanged; their cost has rate $a$, while approaching deliveries increase $\Psi$ at rate $Y$, giving $a+Y$. The demand integrands are bounded and predictable because $S\le Y\le S+Q-1$, so Lemma~\ref{lem:poisson-compensation} gives
\[
 J_T+\E\Psi(T)-\Psi(0)
 =\E\int_0^T\bigl[\gamma\mathbf1_{\{X(t)>0\}}+(a+Y(t))\mathbf1_{\{X(t)=0\}}\bigr]\,dt.
\]
Using $\mathbf1_{\{X(t)>0\}}+\mathbf1_{\{X(t)=0\}}=1$ and rearranging terms gives the exact identity
\begin{equation}\label{eq:batch-exact-cost-identity}
 J_T=\gamma T+\E\int_0^T
       (a+Y(t)-\gamma)\mathbf1_{\{X(t)=0\}}\,dt
       +\Psi(0)-\E\Psi(T).
\end{equation}
Here $S\le Y\le S+Q-1$ and $0\le \mathcal W\le\mu(S+Q-1)$, so $\Psi$ is bounded. Substituting $\gamma=S-\mu+b_Q$ and the definitions of $b_Q$ and $d_Q$ gives $a+S+Q-1-\gamma =a+\mu+Q-1-b_Q =a+\mu+{(Q-1)/2}-{k/Q} =a+\mu-d_Q$. Write $\ell_T:=\E\,\mathcal U(T)/T$ for the expected lost-sales rate over $[0,T]$. Since expected demand has rate one, $\ell_T=T^{-1}\E\int_0^T\mathbf1_{\{X(t)=0\}}\,dt$, the expected fraction of this horizon spent out of stock. Using $Y\le S+Q-1$ in the exact identity above therefore gives
\begin{equation}\label{eq:batch-cost-identity}
 J_T/T\le\gamma+(a+\mu-d_Q)\ell_T+O(T^{-1}).
\end{equation}

To bound $\ell_T$, couple the batched policy with a stationary level-$S$ base-stock system on the same future demand path, and denote their sales since time zero by $\mathcal A_Q$ and $\mathcal A_1$. The reference system has $S$ units in total, some possibly pending at time zero. For $\mu>0$, consider an arbitrary demand arrival time $t$ and write $t_0:=(t-\mu)^+$. Supply accounting just before this demand gives
\[
 \begin{split}
 X_Q(t-)
 &=S+Q-1-\mathcal A_Q(t-)+Q\left\lfloor \mathcal A_Q(t_0)/Q\right\rfloor\ge S-\mathcal A_Q(t-)+\mathcal A_Q(t_0),\\
 X_1(t-)&\le S-\mathcal A_1(t-)+\mathcal A_1(t_0).
 \end{split}
\]
The first inequality uses $Q\lfloor n/Q\rfloor\ge n-(Q-1)$. The second allows some of the reference's initial supply still to be pending. Almost surely no positive-lead-time delivery coincides with a demand in either system: new deliveries occur a fixed positive time after past demand epochs, and the stationary reference's initial residual delivery times have continuous probability distributions and are independent of future demand. Thus, these pre-demand inventories determine service level.

Induct over demand epochs. If $\mathcal A_Q(t-)=\mathcal A_1(t-)$, the previously established comparison at the strictly earlier time $t_0$ gives $\mathcal A_Q(t_0)\ge \mathcal A_1(t_0)$, hence $X_Q(t-)\ge X_1(t-)$. The batched policy then serves whenever the reference does. If the sales counts are unequal, their integer difference is at least one, and a single demand cannot reverse the inequality. Starting with both counts zero proves $\mathcal A_Q(T)\ge \mathcal A_1(T)$ for every $T$. For $\mu=0$, both policies serve every demand. Applying Lemma~\ref{lem:poisson-compensation} to the stationary reference's bounded predictable service indicator gives $\E\,\mathcal A_1(T)=(1-e_S)T$. Therefore,
\begin{equation}\label{eq:batch-sales-comparison}
 \ell_T=1-\frac{\E\,\mathcal A_Q(T)}T\le e_S
 \qquad\text{for every }T>0.
\end{equation}

Choose $ \chi:=\sqrt{2k}, Q_k:=\max\{1,\lfloor\chi\rfloor\} $. Set $Q=Q_k$ in the preceding definitions, so $b_Q=b_k$ and $\gamma=S-\mu+b_k$. Then
\begin{equation}\label{eq:integer-lot-surcharge}
 d_{Q_k}\ge0,\qquad b_k\le\chi.
\end{equation}
For $\chi<1$ these follow from $Q_k=1$ and $k=\chi^2/2\le\chi$; otherwise $0\le\chi-Q_k<1$ gives $b_k=\chi-1/2+(\chi-Q_k)^2/(2Q_k)\le\chi$, because $(\chi-Q_k)^2<1\le Q_k$, and $d_{Q_k}=[\chi^2-Q_k(Q_k-1)]/(2Q_k)\ge0$, because $\chi\ge Q_k$. Substituting $\gamma=S-\mu+b_k$ into \eqref{eq:batch-cost-identity} gives the first inequality below. The second uses $d_{Q_k}\ge0$ and $\ell_T\ge0$; the third uses $\ell_T\le e_S$ from \eqref{eq:batch-sales-comparison} and $a+\mu\ge0$:
\[
\begin{aligned}
 J_T/T
 &\le S-\mu+b_k+(a+\mu-d_{Q_k})\ell_T+O(T^{-1})\\
 &\le S-\mu+b_k+(a+\mu)\ell_T+O(T^{-1})\le S-\mu+b_k+(a+\mu)e_S+O(T^{-1}).
\end{aligned}
\]
Taking the limsup as $T\to\infty$, recalling $L_S(a)=S-\mu+(a+\mu)e_S$ from \eqref{eq:base-stock-cost}, and using $b_k\le\chi$ gives
\begin{equation}\label{eq:batch-candidate-bound}
 C_{\mu,a,k}(\pi_{\rm rQ}(S-1,Q_k))
 \le S-\mu+b_k+(a+\mu)e_S=L_S(a)+b_k\le L_S(a)+\chi.
\end{equation}
Minimizing over $S\ge1$ and retaining no order gives the first inequality below. The second follows from the zero-setup bound~\eqref{eq:zero-setup-premise}, proved with $c=\sqrt2$ in Section~\ref{sec:interval-coverage}:
\begin{equation}\label{eq:batch-menu-bound}
 C_{\rm rQ}\le\min\{a,U_\mu(a)+\chi\}
 \le\min\{a,c\operatorname{OPT}_0(\mu,a)+\chi\}.
\end{equation}
This bound is realized by the single policy selected from a minimizing base-stock level $S^\star$ and the displayed $Q_k$, or by no order. When level zero minimizes $U_\mu$, no order already provides the bound.

\subsection{Cost Evaluation and Joint Integer Search}\label{app:numerical-search}

\textbf{An integer batching lower bound.} The finite-sum calculation in Appendix~\ref{app:batch-lower} gives a stronger bound for numerical evaluation if we retain integer lot sizes. Define $\beta_k:=\min_{Q\in\mathbb N}\{(Q+1)/2+k/Q\}$. For every integer $n\ge1$, the definition gives $n\beta_k-n(n+1)/2\le k$. Consequently, $\sum_{i\ge1}(\beta_k-i)^+=\max_{n\ge0}\{n\beta_k-n(n+1)/2\}\le k$. Using $b=\beta_k$ and $\theta=1$ in the proof of Lemma~\ref{lem:batch-service-credit} and the subsequent cost comparison in Appendix~\ref{app:batch-lower} proves $\operatorname{OPT}_k(\mu,a)\ge\min\{a,\beta_k\}$. We include this bound in $\underline C_k(\mu,a)$. For $k=1,10,50,120$, its values before taking the minimum with $a$ are $2,5,10.5,16$, respectively. At $\mu=0$, a minimizing lot with $r=0$ has cost $\beta_k$, while no replenishment has cost $a$, so this lower bound is exact.

\textbf{Evaluating a classical policy.} For the numerical comparison in Section~\ref{sec:numerics}, we optimize both integers $r$ and $Q$ and compare the resulting policy with the heuristic. Fix $\mu$, and let $\ell_{r,Q}$ be the long-run stockout fraction and $H_{r,Q}$ the mean on-hand inventory under $\pi_{\rm rQ}(r,Q)$. Rate-one Poisson demand gives a sales rate of $1-\ell_{r,Q}$ and an order rate of $(1-\ell_{r,Q})/Q$. Therefore,
\[
C_{\mu,a,k}(\pi_{\rm rQ}(r,Q))=H_{r,Q}+a\ell_{r,Q}+\frac{k}{Q}(1-\ell_{r,Q}).
\]
The physical quantities $H_{r,Q}$ and $\ell_{r,Q}$ depend on $\mu,r,Q$, so the same evaluation can be reused across all penalties and setup costs.

At a stockout, inventory position consists entirely of pending lots, so it is the unique multiple of $Q$ in $\{r+1,\ldots,r+Q\}$, namely $Q\lceil(r+1)/Q\rceil$. To calculate $H_{r,Q}$ for the original system with $k\ge0$, we temporarily set $a=k=0$ while keeping $\mu,r,Q$ fixed. The policy then makes the same replenishment decisions on each demand path, so its mean inventory $H_{r,Q}$ and stockout fraction $\ell_{r,Q}$ remain unchanged. Only holding cost remains, so average cost equals $H_{r,Q}$. With $S=r+1$ and $k=0$ for this calculation, the definition of $\gamma$ gives $\gamma=r+(Q+1)/2-\mu$. Substituting this and the stockout value of $Y$ into the exact cost identity~\eqref{eq:batch-exact-cost-identity} in Appendix~\ref{app:batch-upper}, dividing by $T$, and letting $T\to\infty$ gives the following formula, since the bounded term $\Psi(0)-\E\Psi(T)$ divided by $T$ vanishes:
\[
H_{r,Q}=\left(r+\frac{Q+1}{2}-\mu\right)(1-\ell_{r,Q})+Q\left\lceil\frac{r+1}{Q}\right\rceil\ell_{r,Q}.
\]
The resulting $H_{r,Q}$ applies to the original system and enters the full-cost formula above with the actual values of $a$ and $k$. Thus, only the stockout fraction $\ell_{r,Q}$ requires numerical estimation. When $Q=1$, it is exactly $e_{r+1}$. When $r<Q$, at most one order is pending (see footnote~\ref{fn:single-outstanding-order}). An order cycle is the interval between consecutive order placements. Immediately after an order, inventory position is $r+Q$, and the next order is triggered when it reaches $r$. Each sale reduces inventory position by one, while deliveries and lost demands leave it unchanged, so exactly $Q$ demands are served during the cycle. The cycle starts with $r$ units on hand and one lot due after $\mu$ time units; losses occur only before that lot arrives, giving expected lost demand $\E(N(\mu)-r)^+$. Hence,
\[
\ell_{r,Q}=\frac{\E(N(\mu)-r)^+}{Q+\E(N(\mu)-r)^+}\qquad(r<Q).
\]
At $\mu=0$, the stockout fraction is zero. These cases give exact policy costs and checks on the simulation used for the remaining pairs.

\begin{figure}[!t]
\SingleSpaced
\centering
\includegraphics[width=.99\linewidth]{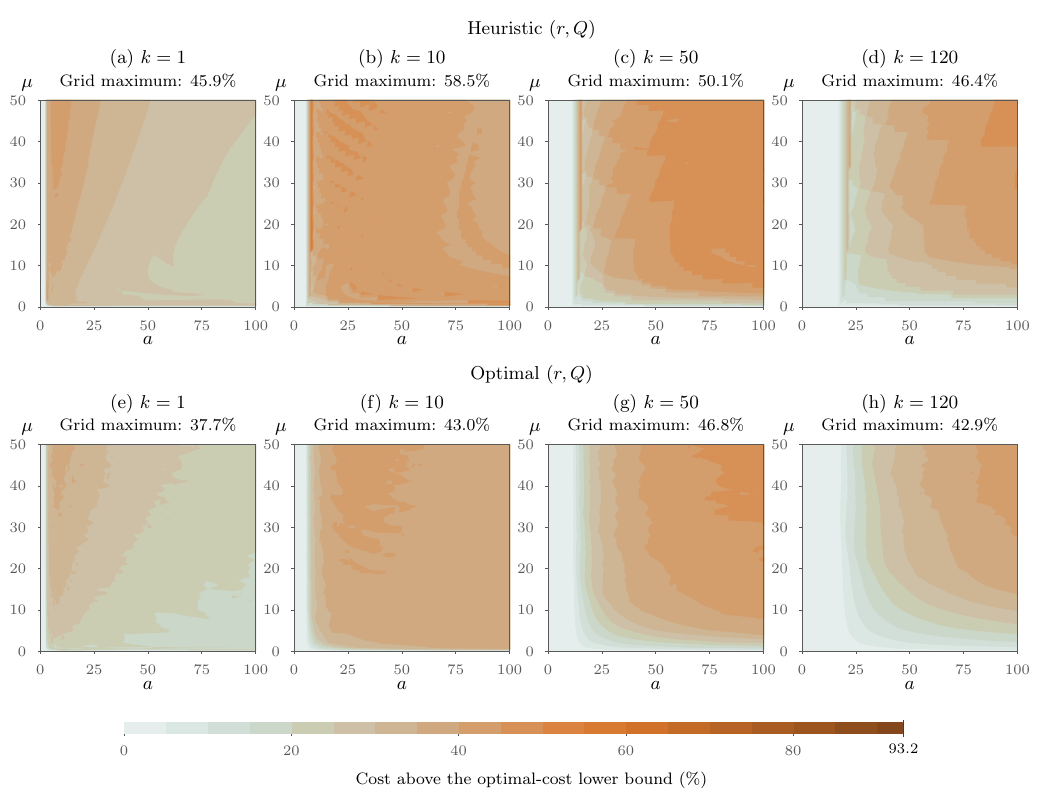}
\caption{Costs of heuristic and numerically optimized $(r,Q)$ policies relative to the optimal-cost lower bound}
\label{fig:positive-setup-gap}
\begin{minipage}{0.98\linewidth}
\SingleSpaced\footnotesize\emph{Notes.} Shading shows $100[C/\underline C_k(\mu,a)-1]$ for full policy cost $C$: heuristic above, numerical joint optimization below, including no replenishment. These estimate upper bounds on true optimality gaps. Contours interpolate grid values on a common scale; maxima are over the grid. The $93.2\%$ marker corresponds to the uniform factor $1.932$ relative to the actual optimum. At $a=0$, zero cost is displayed as zero gap. In our simulation results, the $95\%$ confidence interval for each gap extends less than one percentage point above and below the estimate.
\end{minipage}
\end{figure}

\textbf{Finite search bounds.} No replenishment has cost $a$. Because mean pending inventory is $\mu(1-\ell_{r,Q})$ and inventory position is at least $r+1$, mean on-hand inventory is at least $r+1-\mu$. Any policy improving on no replenishment must therefore have $r+1-\mu<a$.

To bound $Q$, partition positive on-hand inventory levels into $Q$ groups: group $j$ contains the levels $j,j+Q,j+2Q,\ldots$, for $j=1,\ldots,Q$. These levels have the same remainder upon division by $Q$. Each delivery adds $Q$ units and preserves this remainder, while each sale reduces inventory by one. The groups observed immediately before successive sales therefore cycle through all $Q$ groups. Every $Q$ consecutive sales includes exactly one sale from each group, so the cumulative sales counts from any two groups differ by at most one. Each group thus accounts for $1/Q$ of long-run sales. Applying Lemma~\ref{lem:poisson-compensation} to the bounded predictable indicator of each group shows that the expected fraction of time spent in that group is $(1-\ell_{r,Q})/Q$ in the long run. Since inventory is at least $j$ whenever it belongs to group $j$, summing these holding-cost contributions gives
\[
H_{r,Q}\ge\sum_{j=1}^Q j\,\frac{1-\ell_{r,Q}}Q
=\frac{Q(Q+1)}2\,\frac{1-\ell_{r,Q}}Q
=\frac{Q+1}{2}(1-\ell_{r,Q}).
\]
Adding lost-sales and setup costs to this holding-cost bound gives
\[
\begin{aligned}
C_{\mu,a,k}(\pi_{\rm rQ}(r,Q))
&=H_{r,Q}+a\ell_{r,Q}+\frac{k}{Q}(1-\ell_{r,Q})\\
&\ge\left(\frac{Q+1}{2}+\frac{k}{Q}\right)(1-\ell_{r,Q})+a\ell_{r,Q}\ge\min\left\{a,\frac{Q+1}{2}+\frac{k}{Q}\right\}.
\end{aligned}
\]
The last inequality holds because $0\le\ell_{r,Q}\le1$: the preceding expression is a weighted average of $a$ and $(Q+1)/2+k/Q$, so it is at least the smaller of these two values.

An improving lot must satisfy $(Q+1)/2+k/Q<a$, which in particular gives $Q<2a-1$. These bounds make the search finite. We evaluate the exact cases first and use their costs and the proved heuristic cost bound to exclude additional pairs that cannot improve the current upper bound. The further exclusion test combines the bound $\ell_{r,Q}\ge e_{r+Q}$ with the formulas for $H_{r,Q}$ and full cost above. Substituting the expression for $H_{r,Q}$ into the full-cost formula expresses cost as a linear function of $\ell_{r,Q}$. Its minimum over $e_{r+Q}\le\ell_{r,Q}\le1$ is a cost lower bound, obtained by evaluating that function at the two endpoints. We exclude $(r,Q)$ if this lower bound is at least the current best cost upper bound.

To justify $\ell_{r,Q}\ge e_{r+Q}$, couple the batched policy with a base-stock system of level $r+Q$, both starting with $r+Q$ units on hand and using the same demand path. Apply the induction over demand arrivals in Appendix~\ref{app:batch-upper}, now using $Q\lfloor n/Q\rfloor\le n$ in the supply accounting. This reverses the cumulative-sales comparison: the batched policy serves no more cumulative demand than the base-stock system over every time horizon. Its long-run served fraction is therefore at most $1-e_{r+Q}$, so its loss fraction satisfies $\ell_{r,Q}\ge e_{r+Q}$.

\textbf{Simulation and selection.} The simulation processes demand arrivals and scheduled deliveries in chronological order, preserving the exact inventory position and pending-order list. For each specified value of $\mu$, we search over $(r,Q)$. Across these values of $\mu$, the analytical exclusions leave a total of $31{,}394$ candidate pairs requiring simulation. Each candidate pair is evaluated at its fixed $\mu$ with four independent runs of $250{,}000$ measured time units. We minimize the resulting full-cost estimates, including the exact cases and no replenishment. We refer to the selected pair as $(r^\star,Q^\star)$; selection among close candidates remains subject to simulation error.

Reported costs use independent runs after policy selection to avoid the downward bias from reporting the smallest search estimate. We initially use eight runs of $500{,}000$ measured time units after at least $10{,}000$ warm-up units. Where greater precision is needed, we use $30$ independent runs of at least $2\times10^6$ measured time units. We evaluate the heuristic by the same formulas, including its comparison with no replenishment. Exact cases validate the simulation, and longer warm-up and measurement periods check sensitivity to initialization. The confidence intervals measure pointwise Monte Carlo uncertainty and do not establish exact parameter optimality. At $k=10$, four slightly negative estimates of the heuristic's extra cost in Figure~\ref{fig:heuristic-extra-cost-distribution}(b) reflect simulation error in the independently estimated policy costs.

\FloatBarrier


\begin{thebibliography}{}

\bibitem[{Bijvank and Vis(2011)}]{BijvankVis2011} Bijvank, M., I. F. A. Vis. 2011. Lost-sales inventory theory: A review. \emph{European J. Oper. Res.} 215(1) 1--13.

\bibitem[{Federgruen and Zheng(1992)}]{FedergruenZheng1992} Federgruen, A., Y.-S. Zheng. 1992. An efficient algorithm for computing an optimal $(r,Q)$ policy in continuous review stochastic inventory systems. \emph{Oper. Res.} 40(4) 808--813.

\bibitem[{Goldberg et~al.(2016)}]{GoldbergEtAl2016} Goldberg, D., D. Katz-Rogozhnikov, Y. Lu, M. Sharma, M. Squillante. 2016. Asymptotic optimality of constant-order policies for lost sales inventory models with large lead times. \emph{Math. Oper. Res.} 41(3) 898--913.

\bibitem[{Hadley and Whitin(1963)}]{HadleyWhitin1963} Hadley, G., T. M. Whitin. 1963. \emph{Analysis of Inventory Systems}. Prentice-Hall, Englewood Cliffs, NJ.

\bibitem[{Hill and Johansen(2006)}]{HillJohansen2006}
Hill, R. M., S. G. Johansen. 2006. Optimal and near-optimal policies for lost sales inventory models with at most one replenishment order outstanding. \emph{European J. Oper. Res.} 169(1) 111--132.

\bibitem[{Hill(2007)}]{Hill2007}
Hill, R. M. 2007. Continuous-review, lost-sales inventory models with Poisson demand, a fixed lead time and no fixed order cost. \emph{European J. Oper. Res.} 176(2) 956--963.

\bibitem[{Huh et~al.(2009)}]{HuhJMR2009} Huh, W. T., G. Janakiraman, J. A. Muckstadt, P. Rusmevichientong. 2009. Asymptotic optimality of order-up-to policies in lost sales inventory systems. \emph{Management Sci.} 55(3) 404--420.

\bibitem[{Johansen and Thorstenson(1996)}]{JohansenThorstenson1996}
Johansen, S. G., A. Thorstenson. 1996. Optimal $(r,Q)$ inventory policies with Poisson demands and lost sales: Discounted and undiscounted cases. \emph{Int. J. Production Econom.} 46--47 359--371.

\bibitem[{Kingman(1993)}]{Kingman1993} Kingman, J. F. C. 1993. \emph{Poisson Processes}. Oxford University Press, Oxford, UK.

\bibitem[{Levi et~al.(2007)}]{LeviEtAl2007} Levi, R., M. P\'al, R. O. Roundy, D. B. Shmoys. 2007. Approximation algorithms for stochastic inventory control models. \emph{Math. Oper. Res.} 32(2) 284--302.

\bibitem[{Levi et~al.(2008)}]{LeviJN2008}
Levi, R., G. Janakiraman, M. Nagarajan. 2008. A 2-approximation algorithm for stochastic inventory control models with lost sales. \emph{Math. Oper. Res.} 33(2) 351--374.

\bibitem[{Levi and Shi(2013)}]{LeviShi2013}
Levi, R., C. Shi. 2013. Approximation algorithms for the stochastic lot-sizing problem with order lead times. \emph{Oper. Res.} 61(3) 593--602.

\bibitem[{Reiman(2004)}]{Reiman2004} Reiman, M. I. 2004. A new and simple policy for the continuous review lost sales inventory model. Working paper, Bell Labs, Lucent Technologies, Murray Hill, NJ.

\bibitem[{Shang et~al.(2009)}]{ShangSongZipkin2009} Shang, K. H., J.-S. Song, P. H. Zipkin. 2009. Coordination mechanisms in decentralized serial inventory systems with batch ordering. \emph{Management Sci.} 55(4) 685--695.

\bibitem[{Shang and Zhou(2010)}]{ShangZhou2010} Shang, K. H., S. X. Zhou. 2010. Optimal and heuristic echelon $(r,nQ,T)$ policies in serial inventory systems with fixed costs. \emph{Oper. Res.} 58(2) 414--427.

\bibitem[{Song(1994)}]{Song1994} Song, J.-S. 1994. The effect of leadtime uncertainty in a simple stochastic inventory model. \emph{Management Sci.} 40(5) 603--613.

\bibitem[{Wolff(1982)}]{Wolff1982}
Wolff, R. W. 1982. Poisson arrivals see time averages. \emph{Oper. Res.} 30(2) 223--231.

\bibitem[{Xin(2021)}]{Xin2021} Xin, L. 2021. Technical note: Understanding the performance of capped base-stock policies in lost-sales inventory models. \emph{Oper. Res.} 69(1) 61--70.

\bibitem[{Xin(2022)}]{Xin2022}
Xin, L. 2022. 1.79-approximation algorithms for continuous review single-sourcing lost-sales and dual-sourcing inventory models. \emph{Oper. Res.} 70(1) 111--128.

\bibitem[{Xin(2026)}]{Xin2026} Xin, L. 2026. Capped base-stock policies: A 2.33-approximation. Working paper, Cornell University. \url{https://ssrn.com/abstract=7134538}.

\bibitem[{Xin and Goldberg(2016)}]{XinGoldberg2016} Xin, L., D. A. Goldberg. 2016. Optimality gap of constant-order policies decays exponentially in the lead time for lost sales models. \emph{Oper. Res.} 64(6) 1556--1565.

\bibitem[{Zachary(2007)}]{Zachary2007}
Zachary, S. 2007. A note on insensitivity in stochastic networks. \emph{J. Appl. Probab.} 44(1) 238--248.

\bibitem[{Zipkin(2008)}]{Zipkin2008}
Zipkin, P. 2008. Old and new methods for lost-sales inventory systems. \emph{Oper. Res.} 56(5) 1256--1263.

\end{thebibliography}

\begin{thebibliography}{}

\bibitem[{Abramowitz and Stegun(1964)}]{AbramowitzStegun1964}
Abramowitz, M., I. A. Stegun, eds. 1964. \emph{Handbook of Mathematical Functions with Formulas, Graphs, and Mathematical Tables}. National Bureau of Standards, Washington, DC.

\bibitem[{Alfers and Dinges(1984)}]{AlfersDinges1984}
Alfers, D., H. Dinges. 1984. A normal approximation for beta and gamma tail probabilities. \emph{Z. Wahrsch. Verw. Gebiete} 65(3) 399--420.

\bibitem[{Dia and Lamberton(2011)}]{DiaLamberton2011}
Dia, E. H. A., D. Lamberton. 2011. Connecting discrete and continuous lookback or hindsight options in exponential L\'evy models. \emph{Adv. Appl. Probab.} 43(4) 1136--1165.

\bibitem[{Harrison(1985)}]{Harrison1985}
Harrison, J. M. 1985. \emph{Brownian Motion and Stochastic Flow Systems}. Wiley, New York.

\bibitem[{Kyprianou(2014)}]{Kyprianou2014}
Kyprianou, A. E. 2014. \emph{Fluctuations of L\'evy Processes with Applications: Introductory Lectures}. 2nd ed. Springer, Heidelberg.

\bibitem[{Robbins(1955)}]{Robbins1955}
Robbins, H. 1955. A remark on Stirling's formula. \emph{Amer. Math. Monthly} 62(1) 26--29.

\bibitem[{Zubkov and Serov(2013)}]{ZubkovSerov2013}
Zubkov, A. M., A. A. Serov. 2013. A complete proof of universal inequalities for the distribution function of the binomial law. \emph{Theory Probab. Appl.} 57(3) 539--544.

\end{thebibliography}
\end{document}